\documentclass[12pt]{elsarticlearxiv}




\usepackage{amsfonts}
\usepackage{amssymb,amsmath,amsthm,bm}

\usepackage{subfigure}
\usepackage{marvosym}
\RequirePackage[colorlinks,urlcolor=blue]{hyperref}
\usepackage{graphicx}
\usepackage{mathptmx} 
\usepackage{fancyhdr, lipsum}
\DeclareMathAlphabet{\mathcal}{OMS}{cmsy}{m}{n}






\numberwithin{equation}{section}
\newtheorem{thm}{Theorem}[section]
\newtheorem{cor}[thm]{Corollary}
\newtheorem{lem}[thm]{Lemma}
\newtheorem{prop}[thm]{Proposition}

\newtheorem{defn}[thm]{Definition}
\newtheorem{ass}[thm]{Assumption}
\newtheorem{ex}[thm]{Example}
\newtheorem{rem}[thm]{Remark}

\newcommand{\tr}{\rm tr}

\newcommand{\G}{\mathcal{N}}
\newcommand{\R}{\mathbb{R}}
\newcommand{\rp}{\mathbb{P}}
\newcommand{\ra}{\mathcal{R}}
\newcommand{\D}{\mathcal{D}}
\newcommand{\M}{\mathcal{M}_q}
\newcommand{\Mr}{\mathcal{M}_r}
\newcommand{\Om}{\Omega}

\newcommand{\hiddensection}[1]{
\stepcounter{section}
\section*{}}

\newcommand{\Sr}{(\A_0^\frac14+\mathcal{M}_r)}
\newcommand{\Sb}{(\A_0^\frac{\beta\alpha}2+\mathcal{M}_r)}

\newcommand{\Sq}{(\A_0+\mathcal{M}_q)}

\newcommand{\N}{\mathbb{N}}

\newcommand{\A}{\mathcal{A}}
\newcommand{\C}{\mathcal{C}}

\newcommand{\Ss}{\mathcal{S}}
\newcommand{\B}{\mathcal{B}}

\newcommand{\X}{\mathcal{X}}

\newcommand{\T}{\mathcal{T}}
\newcommand{\Sc}{\mathcal{S}}

\newcommand{\coloneq}{\mathrel{\mathop:}=}
\newcommand{\eqcolon}{=\mathrel{\mathop:}}
\newcommand{\norm}[1]{\big\|#1\big\|}
\newcommand{\sumi}{\sum_{k=1}^{\infty}}
\newcommand{\E}{\mathbb{E}}
\newcommand{\pr}[2]{\big\langle#1,#2\big\rangle}
\newcommand{\envelope}{(\raisebox{-.5pt}{\scalebox{1.45}{\Letter}}\kern-1.7pt)}

\begin{document}

\begin{frontmatter}



\title{\small{Preprint of final article which will appear as
Stochastic Processes and their Applications
Volume 123, Issue 10, October 2013, Pages 3828-3860}\\\vspace{0.5cm}\Large{Posterior Contraction Rates for the Bayesian Approach to Linear Ill-Posed Inverse Problems}
}

\author[m1]{Sergios Agapiou\corref{cor1}}
\ead{S.Agapiou@warwick.ac.uk}
\cortext[cor1]{Corresponding author.}
\author[m2]{Stig Larsson}
\ead{Stig@chalmers.se} 
\author[m1]{Andrew M. Stuart}
\ead{A.M.Stuart@warwick.ac.uk}

\address[m1]{Mathematics Institute, University of Warwick\\  Coventry CV4 7AL, United Kingdom}

\address[m2]{Department of Mathematical Sciences, Chalmers University of Technology\\ and University of Gothenburg, SE-412 96 Gothenburg, Sweden}

\begin{abstract}
We consider a Bayesian nonparametric approach to a family of linear inverse problems in a separable Hilbert space setting with Gaussian noise. We assume Gaussian priors, which are conjugate to the model, and present a method of identifying the posterior using its precision operator. Working with the unbounded precision operator enables us to use partial differential equations (PDE) methodology to obtain rates of contraction of the posterior distribution to a Dirac measure centered on the true solution. Our methods assume a relatively weak relation between the prior covariance, noise covariance and forward operator, allowing for a wide range of applications.\end{abstract}

\begin{keyword}posterior consistency \sep posterior contraction \sep Gaussian prior \sep posterior distribution \sep inverse problems
\MSC 62G20 \sep 62C10 \sep 35R30 \sep 45Q05



\end{keyword}

\end{frontmatter}


\section{Introduction}\label{sec:intro}
The solution of inverse problems provides a rich source of 
applications of the Bayesian nonparametric methodology. It encompasses a broad range
of applications from partial differential equations (PDEs)~\cite{MR1045629}, where there is a well-developed theory of classical, non-statistical, regularization~\cite{MR1408680}. On the other hand, the area of nonparametric Bayesian statistical estimation and in particular the problem of posterior consistency has attracted a lot of interest in recent years; see for instance~\cite{MR1790007, MR1865337, MR2589319,MR2418663,MR2357712,gine,MR829555}. Despite this, the formulation of many of these PDE inverse problems using the Bayesian approach is in its infancy~\cite{MR2652785}. 
Furthermore, the development of a theory of Bayesian posterior consistency, analogous to the theory for classical regularization, is under-developed with the primary contribution being the recent paper~\cite{1232.62079}.
This recent paper provides a roadmap for what is to be expected regarding Bayesian posterior consistency, but is limited in terms of applicability by the
assumption of simultaneous diagonalizability of the three linear operators required to define Bayesian inversion. Our aim in this paper is to make a significant step in the theory of Bayesian posterior consistency for linear inverse problems by developing a methodology which sidesteps the need for simultaneous diagonalizability. The central idea underlying the analysis is to work
with precision operators rather than covariance operators, and thereby to enable use of powerful tools from PDE theory to facilitate the analysis.

      Let $\X$ be a separable Hilbert space, with norm $\|\cdot\|$ and inner product $\langle\cdot,\cdot\rangle$, and let $\A\colon \D(\A)\subset\X\to\X$ be a known  self-adjoint and positive-definite linear operator with bounded inverse.       We consider the inverse problem to find $u$ from  $y$, where $y$ is a noisy observation of $\A^{-1}u$. We assume the model, \begin{equation}\label{eq:int1}y=\A^{-1}u+\frac{1}{\sqrt{n}}\xi, \end{equation}      
      where $\frac{1}{\sqrt{n}}\xi$ is an additive noise. We will be particularly interested in the small noise limit where $n\to\infty$.

A popular method in the deterministic approach to inverse problems is the generalized Tikhonov-Phillips regularization method in which  $u$ is approximated by the minimizer of a regularized least squares functional: define the Tikhonov-Phillips 
 functional \begin{equation}\label{eq:int2}J_0(u)\coloneq \frac12\norm{\C_1^{-\frac12}(y-\A^{-1}u)}^2+\frac{\lambda}{2}\norm{\C_0^{-\frac12}u}^2,\end{equation} where $\C_i\colon \X\to\X, \;i=0,1,$ are bounded, possibly compact, self-adjoint positive-definite linear operators. The parameter $\lambda$ is called the regularization parameter, 
 and in the classical non-probabilistic approach the general practice is to choose it as an appropriate function of the noise size $n^{-\frac12}$, which shrinks to zero as $n\to\infty$, in order to recover the unknown parameter $u$~\cite{MR1408680}.

In this paper we adopt a Bayesian approach for the solution of problem (\ref{eq:int1}), which will be linked to the minimization of $J_0$ via the posterior mean. We assume that the prior distribution is Gaussian, $u\sim\mu_0=\G(0,\tau^2\C_0)$, where $\tau>0$ and $\C_0$ is a self-adjoint, positive-definite, trace class, linear operator on $\X$. We also assume that the noise is  Gaussian, $\xi\sim\G(0,\C_1)$, where $\C_1$ is a self-adjoint positive-definite, bounded, but not necessarily trace class, linear operator; this allows us to include the case of white observational noise. We assume that the, generally unbounded, operators $\C_0^{-1}$ and $\C_1^{-1},$ have been maximally extended to self-adjoint positive-definite operators on appropriate domains. The unknown parameter and the noise are considered to be independent, thus the conditional distribution of the observation given the unknown parameter $u$ (termed the likelihood) is also Gaussian with distribution $y|u\sim\G(\A^{-1}u,\frac1n\C_1).$ 
        
Define $\lambda=\frac1{n\tau^2}$ and let \begin{equation}\label{eq:int3}J(u)=nJ_0(u)=\frac{n}2\norm{\C_1^{-\frac12}(y-\A^{-1}u)}^2+\frac1{2\tau^2}\norm{\C_0^{-\frac12}u}^2.\end{equation}
In finite dimensions the probability density of the posterior distribution, that is, the distribution of the unknown given the observation, with respect to the Lebesgue measure is proportional to $\exp{\left(-J(u)\right)}.$ This suggests that, in the infinite-dimensional setting, the posterior is Gaussian $\mu^y=\G(m,\C)$, where we can identify the posterior covariance and mean by the equations \begin{equation}\label{eq:int4}\C^{-1}=n\A^{-1}\C_1^{-1}\A^{-1}+\frac1{\tau^2}\C_0^{-1} \end{equation}
and \begin{equation}\label{eq:int5}\frac1n\C^{-1}m=\A^{-1}\C_1^{-1}y, \end{equation}obtained by completing the square. We present a method of justifying these expressions in Section \ref{sec:justification}. We define \begin{equation}\label{eq:int6}\B_\lambda=\frac1n\C^{-1}=\A^{-1}\C_1^{-1}\A^{-1}+\lambda \C_0^{-1}\end{equation}
and observe that the dependence of $\B_\lambda$ on $n$ and $\tau$ is only through $\lambda$. Since \begin{equation}\label{eq:int7}\B_\lambda m=\A^{-1}\C_1^{-1}y, \end{equation}  the posterior mean also depends only on $\lambda$: $m=m_\lambda$. This is not the case for the posterior covariance $\C$, since it depends on $n$ and $\tau$ separately:  $\C=\C_{\lambda,n}$. In the following, we suppress the dependence of the posterior covariance on $\lambda$ and $n$ and we denote it by $\C$. 

Observe that the posterior mean is the minimizer of the functional $J$, hence also of $J_0,$ that is, the posterior mean is the Tikhonov-Phillips regularized approximate solution of problem (\ref{eq:int1}), for the functional $J_0$ with $\lambda=\frac{1}{n\tau^2}$.

In~\cite{MR731228} and~\cite{MR1009041}, formulae for the posterior covariance and mean are identified in the infinite-dimensional setting, which avoid using any of the inverses of the prior, posterior or noise covariance operators.  They obtain  \begin{equation}\label{eq:int8}\C=\tau^2\C_0-\tau^2\C_0\A^{-1}(\A^{-1}\C_0\A^{-1}+\lambda \C_1)^{-1}\A^{-1}\C_0\end{equation}and \begin{equation}\label{eq:int9}m=\C_0\A^{-1}(\A^{-1}\C_0\A^{-1}+\lambda \C_1)^{-1}y,\end{equation} which are consistent with formulae (\ref{eq:int4}) and (\ref{eq:int7}) for the finite-dimensional case. In~\cite{MR731228} this is done only for $\C_1$ of trace class while in~\cite{MR1009041} the case of white observational noise was included. We will work in an infinite-dimensional setting where the formulae (\ref{eq:int4}), (\ref{eq:int7}) for the posterior covariance and mean can be justified. Working with the unbounded operator $\B_\lambda$ opens the possibility of using tools of analysis, and also numerical analysis, familiar from the theory of partial differential equations.

In our analysis we always assume that $\C_0^{-1}$ is regularizing, that is, we assume that $\C_0^{-1}$ dominates $\B_\lambda$ in the sense that it induces stronger norms than $\A^{-1}\C_1^{-1}\A^{-1}.$ This is a reasonable assumption since otherwise we would have $\B_\lambda\simeq \A^{-1}\C_1^{-1}\A^{-1}$ (here $\simeq$ is used loosely to indicate two operators which induce equivalent norms; we will make this notion precise in due course).  This would imply that the posterior mean is $m\simeq\A y$, meaning that we attempt to invert the data by applying the, generally discontinuous, operator $\A$~\cite[Proposition 2.7]{MR1408680}.

We study the consistency of the posterior $\mu^y$ in the frequentist setting. To this end, we consider data $y=y^\dagger$ which is a realization of  \begin{equation}\label{eq:int10}y^{\dagger}=\A^{-1}u^\dagger+\frac1{\sqrt{n}}\xi, \quad\xi\sim\G(0,\C_1),\end{equation}
where $u^\dagger$ is a fixed element of $\X$; that is, we consider observations which are perturbations of the image of a fixed true solution $u^\dagger$ by an additive noise $\xi$, scaled by $\frac1{\sqrt{n}}$. Since the posterior depends through its mean on the data and also through its covariance operator on the scaling of the noise and the prior, this choice of data model gives as posterior distribution the Gaussian measure $\mu^{y^\dagger}_{\lambda, n}=\G(m_\lambda^\dagger, \C)$, where $\C$ is given by (\ref{eq:int4}) and \begin{equation}\label{eq:int11}\B_\lambda m_\lambda^\dagger=\A^{-1}\C_1^{-1}y^\dagger. \end{equation}
We study the behavior of the posterior $\mu^{y^\dagger}_{\lambda,n}$ as the noise disappears ($n\to\infty$). Our aim is to show that it contracts to a Dirac measure centered on the fixed true solution $u^\dagger$. In particular, we aim to determine $\varepsilon_n$ such that \begin{equation}\label{eq:main1}\E^{y^\dagger}\mu^{y^\dagger}_{\lambda,n}\left\{u:\norm{u-u^\dagger}\geq M_n\varepsilon_n\right\}\to0, \quad \forall M_n\to\infty,\end{equation}
where the expectation is with respect to the random variable $y^\dagger$ distributed according to the data likelihood
 $\G(\A^{-1}u^\dagger, \frac1n\C_1)$.
 
As in the deterministic theory of inverse problems, in order to get convergence in the small noise limit, we let the regularization disappear in a carefully chosen way, that is, we will choose $\lambda=\lambda(n)$ such that $\lambda\to0$ as $n\to\infty$. The assumption that $\C_0^{-1}$ dominates $\B_\lambda$,
shows that $\B_\lambda$  is a singularly perturbed unbounded (usually differential) operator, with an inverse which blows-up in the limit $\lambda\to0$. This together with equation (\ref{eq:int7}), opens up the possibility of using the analysis of such singular limits to study posterior contraction: on the one hand, as $\lambda\to0$, $\B_\lambda^{-1}$ becomes unbounded; on the other hand, as $n\to\infty$, we have more accurate data, suggesting that for the appropriate choice of $\lambda=\lambda(n)$ we can get $m_\lambda^\dagger\simeq u^\dagger$. In particular, we will choose $\tau$ as a function of the scaling of the noise, $\tau=\tau(n),$  under the restriction that the induced choice of $\lambda=\lambda(n)=\frac{1}{n\tau(n)^2}$, is such that $\lambda\to0$ as $n\to\infty$. The last choice will be made in a way which optimizes the rate of posterior contraction $\varepsilon_n$, defined in (\ref{eq:main1}). In general there are three possible asymptotic behaviors of the scaling of the prior $\tau^2$ as $n\to\infty$,~\cite{MR2357712,1232.62079}:
\begin{enumerate}
\item[i)]$\tau^2\to\infty$; we increase the prior spread, if we know that  draws from the prior are more regular than $u^\dagger$;
\item[ii)]$\tau^2$ fixed; draws from the prior have the same regularity as $u^\dagger$;
\item[iii)]$\tau^2\to0$ at a rate slower than $\frac1n$; we shrink the prior spread, when we know that draws from the prior are less regular than $u^\dagger.$
\end{enumerate} 

The problem of posterior contraction in this context is also investigated in~\cite{1232.62079} and~\cite{simoni2}. In~\cite{1232.62079}, sharp convergence rates are obtained in the case where $\C_0, \C_1$ and $\A^{-1}$ are simultaneously diagonalizable, with eigenvalues decaying algebraically, and in particular $\C_1=I$, that is, the data are polluted by white noise. In this paper we relax the assumptions on the relations between the operators $\C_0, \C_1$ and $\A^{-1}$, by assuming that appropriate powers of them induce comparable norms (see Section \ref{sec:assumptions}). In~\cite{simoni2}, the non-diagonal case is also examined; the three operators involved are related through domain inclusion assumptions. The assumptions made in~\cite{simoni2} can be quite restrictive in practice; our assumptions include settings not covered in~\cite{simoni2}, and in particular the case of white observational noise.

\subsection{Outline of the rest of the paper}
In the following section we present our main results which concern the identification of the posterior (Theorem \ref{justth}) and the posterior contraction (Theorems \ref{pdecor1} and \ref{pdecor2}). In Section \ref{sec:assumptions} we present our assumptions and their implications. The proofs of the main results are built in a series of intermediate results contained in Sections \ref{sec:prmean}-\ref{sec:main}. In Section \ref{sec:prmean}, we reformulate equation (\ref{eq:int7}) as a weak equation in an infinite-dimensional space. In Section \ref{sec:justification}, we present a new method of identifying the posterior distribution: we first characterize it through its Radon-Nikodym derivative with respect to the prior (Theorem \ref{prop0}) and then justify the formulae (\ref{eq:int4}), (\ref{eq:int7}) for the posterior covariance and mean (proof of Theorem \ref{justth}). In Section \ref{sec:normbounds}, we present operator norm bounds for $\B_\lambda^{-1}$ in terms of the singular parameter $\lambda$, which are the key to the posterior contraction results contained in Section \ref{sec:main}  and their corollaries in Section \ref{sec:mainresults} (Theorems \ref{pdeth1}, \ref{pdeth2} and \ref{pdecor1}, \ref{pdecor2}). In Section \ref{sec:ex}, we present some nontrivial examples satisfying our assumptions and provide the corresponding rates of convergence. In Section \ref{sec:diag}, we compare our results to known minimax rates of convergence in the case where $\C_0, \C_1$ and $\A^{-1}$ are all diagonalizable in the same eigenbasis and have eigenvalues that decay algebraically. Finally, Section \ref{sec:conclusions} is a short conclusion.

The entire paper rests on a rich
set of connections between the theory of stochastic processes
and various aspects of the theory of linear partial differential
equations. In particular, since the Green's function of the
precision operator of a Gaussian measure corresponds to its
covariance function, our formulation and analysis of the
inverse problem via precision operators is very natural.
Furthermore, estimates on the inverse of  singular limits
of these precisions, which have direct implications for
localization of the Green's functions, play a key role in the
analysis of posterior consistency.


\section{Main Results}\label{sec:mainresults}
In this section we present our main results. We postpone the rigorous presentation of our assumptions to the next section and the proofs and technical lemmas are presented together with intermediate results of independent interest in Sections \ref{sec:prmean} - \ref{sec:main}. Recall that we assume a Gaussian prior $\mu_0=\G(0,\tau^2\C_0)$ and a Gaussian noise distribution $\G(0,\C_1)$. Our first assumption concerns the decay of the eigenvalues of the prior covariance operator and enables us to quantify the regularity of draws from the prior. This is encoded in the parameter $s_0\in[0,1)$; smaller $s_0$ implies more regular draws from the prior. We also assume that $\C_1\simeq\C_0^\beta$ and $\A^{-1}\simeq\C_0^\ell$, for some $\beta,\ell\geq0$, where $\simeq$ is s used in the manner outlined in
Section \ref{sec:intro}, and defined in detail in Section \ref{sec:assumptions}. Finally, we assume that the problem is sufficiently ill-posed with respect to the prior. This is quantified by the parameter $\Delta\coloneq 2\ell-\beta+1$ which we assume to be larger than $2s_0$; for a fixed prior, the larger $\Delta$ is, the more ill-posed the problem.
\subsection{Posterior Identification}
Our first main theorem identifies the posterior measure as Gaussian and justifies formulae \ref{eq:int4} and \ref{eq:int7}. This reformulation of the posterior in terms of the
precision operator is key to our method of analysis
of posterior consistency and opens the route to using methods from
the study of partial differential equations (PDEs). These methods
will also be useful for the development of numerical methods for
the inverse problem.\begin{thm}\label{justth}
Under the Assumptions \ref{a2}, the posterior measure $\mu^y(du)$ is  Gaussian $\mu^y=\G(m,\C)$, where $\C$ is given by (\ref{eq:int4}) and $m$ is a weak solution of (\ref{eq:int7}). \qed
\end{thm}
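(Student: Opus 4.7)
The plan is to follow the two-step strategy indicated in Section \ref{sec:justification}: first characterize the posterior via its Radon-Nikodym derivative with respect to the prior (Theorem \ref{prop0}), and then extract the precision operator and mean by a formal completion-of-the-square argument, matching the result to the weak formulation of (\ref{eq:int7}) developed in Section \ref{sec:prmean}.

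For the first step, I would apply an infinite-dimensional Bayes' rule to the pair (Gaussian prior, Gaussian likelihood). Because $\C_1$ need not be trace class, one cannot directly write $\exp(-\tfrac{n}{2}\|\C_1^{-1/2}(y-\A^{-1}u)\|^2)$ as a density on $\X$. The trick is to expand the (formal) quadratic in $u$, absorb the $u$-independent contribution $\|\C_1^{-1/2}y\|^2$ into the normalization constant, and arrive at
$$\frac{d\mu^y}{d\mu_0}(u)\propto \exp\bigl(-\tfrac{n}{2}\|\C_1^{-1/2}\A^{-1}u\|^2 + n\langle \C_1^{-1}y,\A^{-1}u\rangle\bigr).$$
The regularizing part of Assumptions \ref{a2} (namely that $\C_0^{-1}$ dominates $\A^{-1}\C_1^{-1}\A^{-1}$) is precisely what ensures the quadratic $\|\C_1^{-1/2}\A^{-1}u\|^2$ is $\mu_0$-almost surely finite and the linear term is well-defined with $y^\dagger$-probability one, so that after normalization we obtain a genuine probability density.

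For the second step, the log of this density with respect to $\mu_0$ is a negative-definite quadratic form plus a bounded linear functional in $u$, which by the standard characterization of equivalent Gaussian measures (Cameron-Martin / Feldman-H\'ajek) forces $\mu^y$ itself to be Gaussian and equivalent to $\mu_0$. The posterior precision, interpreted in the Cameron-Martin sense, is obtained by adding the prior precision $\tau^{-2}\C_0^{-1}$ to the Hessian $n\A^{-1}\C_1^{-1}\A^{-1}$ of the quadratic, which is exactly (\ref{eq:int4}). The mean is then identified by matching the linear term: for every test element $v$ in the domain associated with $\B_\lambda$,
$$\langle \C^{-1}m, v\rangle = n\langle \A^{-1}\C_1^{-1}y, v\rangle,$$
so that dividing through by $n$ and using the definition of $\B_\lambda$ yields the weak form $\B_\lambda m = \A^{-1}\C_1^{-1}y$, i.e.\ equation (\ref{eq:int7}). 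Existence and uniqueness of such a weak solution will have been established in Section \ref{sec:prmean}, via a Lax-Milgram argument applied to the coercive bilinear form associated with $\B_\lambda$.

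The main obstacle is the rigorous treatment of the density when $\C_1$ is not trace class: neither $y$ nor $\A^{-1}u$ lies, $\mu_0$-almost surely, in the Cameron-Martin space of $\C_1$, so both $\|\C_1^{-1/2}y\|^2$ and the cross term $\langle \C_1^{-1}y,\A^{-1}u\rangle$ require interpretation via a larger space in which the noise measure is supported. The work is to show that after the $u$-independent cancellation the remaining expression is a well-defined random variable on the prior's support and that its exponential is $\mu_0$-integrable, so that the Radon-Nikodym derivative exists and integrates to one. Once this technical hurdle is cleared, the passage from completion of the square to the identification of the covariance via (\ref{eq:int4}) and the weak characterization of the mean via (\ref{eq:int7}) is a comparatively routine application of Cameron-Martin theory combined with the weak-formulation results of Section \ref{sec:prmean}.
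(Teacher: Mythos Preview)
Your first step---establishing the Radon--Nikodym derivative of $\mu^y$ with respect to $\mu_0$ via Theorem \ref{prop0}---matches the paper exactly. The divergence is in the second step.

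You propose to invoke Cameron--Martin/Feldman--H\'ajek directly: since $\log(d\mu^y/d\mu_0)$ is quadratic-plus-linear in $u$, conclude that $\mu^y$ is Gaussian and read off the precision and mean by matching terms. The paper instead proceeds via \emph{finite-dimensional approximation}. It defines truncated measures $\mu^{N,y}$ by replacing $\Phi(u,y)$ with $\Phi(P^Nu,y)$, where $P^N$ projects onto the span of the first $N$ eigenfunctions of $\C_0$. In finite dimensions completing the square is rigorous, so each $\mu^{N,y}$ is explicitly Gaussian $\G(m^N,\C^N)$ (Lemma \ref{lemproj}). One then shows $\mu^{N,y}\Rightarrow\mu^y$ by dominated convergence using the estimates of Lemma \ref{asslem} and Fernique's theorem (Proposition \ref{prop2}); since weak limits of Gaussians are Gaussian, $\mu^y=\G(\overline m,\overline{\C})$. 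Finally a standard Galerkin argument gives $m^N\to m$ in $X^1$, identifying $\overline m$ with the weak solution of (\ref{eq:int7}), and a direct limit computation on the eigenbasis identifies $\overline{\C}=\C$.

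Your route is more direct in spirit but, as written, has a gap. Feldman--H\'ajek describes the Radon--Nikodym derivative between two \emph{given} Gaussians; it does not, on its own, assert that any probability measure whose density with respect to a Gaussian has the form $\exp(\text{quadratic}+\text{linear})$ is itself Gaussian. Converse statements of this kind do exist, but invoking them requires checking that the quadratic form corresponds to a symmetric Hilbert--Schmidt perturbation of the identity on the Cameron--Martin space of $\mu_0$, and the covariance that falls out of such a theorem is expressed as a bounded perturbation of $\tau^2\C_0$, not as the inverse of the unbounded operator in (\ref{eq:int4}). Bridging that gap---showing that the abstract Gaussian covariance you obtain really is $(n\A^{-1}\C_1^{-1}\A^{-1}+\tau^{-2}\C_0^{-1})^{-1}$ in the sense of the weak formulation of Section \ref{sec:prmean}---is exactly the work the paper's approximation argument is designed to do. The Galerkin route trades elegance for concreteness: it makes the link between the formal precision (\ref{eq:int4}) and the actual posterior covariance explicit, and delivers the weak-solution characterization of $m$ simultaneously, without appealing to structure theorems for equivalent Gaussian measures.
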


\subsection{Posterior Contraction}
We now present our results concerning frequentist posterior consistency of the Bayesian solution to the inverse problem. We assume to have data $y^\dagger=y^\dagger(n)$ as in (\ref{eq:int10}), and examine the behavior of the posterior $\mu^{y^\dagger}_{\lambda,n}=\G(m_\lambda^\dagger, \C)$, where $m_\lambda^\dagger$ is given by (\ref{eq:int11}), as the noise disappears ($n\to\infty$). The first convergence result concerns the convergence of the posterior mean $m_\lambda^\dagger$ to the true solution $u^\dagger$ in a range of weighted norms $\|\cdot\|_\eta$ induced by powers of the prior covariance operator $\C_0$. The spaces $(X^\eta,\|\cdot\|_\eta)$ are rigorously defined in the following section. The second result provides rates of posterior contraction of the posterior measure to a Dirac centered on the true solution as described in (\ref{eq:main1}). In both results, we assume a priori known regularity of the true solution $u^\dagger\in X^\gamma$ and give the convergence rates as functions of $\gamma$.

\begin{thm}\label{pdecor1}
Assume $u^\dagger\in X^\gamma$, where $\gamma\geq1$ and let $\eta=(1-\theta)(\beta-2\ell)+\theta$, where $\theta\in[0,1]$. Under the Assumptions \ref{a2}, we have the following optimized rates of convergence, where $\varepsilon>0$ is arbitrarily small:
\begin{enumerate}
\item[i)]if $\gamma\in(1,\Delta+1],$ for $\tau=\tau(n)=n^{-\frac{\gamma-1+s_0+\varepsilon}{2(\Delta+\gamma-1+s_0+\varepsilon)}}$ \begin{equation}\E^{y^\dagger}\norm{m_\lambda^\dagger-u^\dagger}_{\eta}^2\leq cn^{-\frac{\Delta+\gamma-1-\theta\Delta}{\Delta+\gamma-1+s_0+\varepsilon}}; \nonumber\end{equation}
\item[ii)]if $\gamma>\Delta+1$, for $\tau=\tau(n)=n^{-\frac{\Delta+s_0+\varepsilon}{2(2\Delta+s_0+\varepsilon)}}$ \begin{equation}\E^{y^\dagger}\norm{m_\lambda^\dagger-u^\dagger}_{\eta}^2\leq cn^{-\frac{(2-\theta)\Delta}{2\Delta+s_0+\varepsilon}}; \nonumber\end{equation}
\item[iii)]if $\gamma=1$ and $\theta\in[0,1)$ for $\tau=\tau(n)=n^{-\frac{s_0+\varepsilon}{2(\Delta+s_0+\varepsilon)}}$ \begin{equation}\E^{y^\dagger}\norm{m_\lambda^\dagger-u^\dagger}_{\eta}^2\leq cn^{-\frac{(1-\theta)\Delta}{\Delta+s_0+\varepsilon}}. \nonumber\end{equation}If $\gamma=1$ and $\theta=1$ then the method does not give convergence.
\end{enumerate}\qed
\end{thm}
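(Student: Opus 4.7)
The plan is to control $m_\lambda^\dagger-u^\dagger$ by a bias-variance splitting adapted to the precision-operator formulation, then apply the singular-parameter bounds on $\B_\lambda^{-1}$ proved in Section \ref{sec:normbounds} together with the norm equivalences $\C_1\simeq\C_0^\beta$, $\A^{-1}\simeq\C_0^\ell$ from Section \ref{sec:assumptions}, and finally optimize the resulting estimate in $\tau$ (equivalently $\lambda=1/(n\tau^2)$).

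First I would introduce the \emph{noise-free} posterior mean $m_\lambda$ defined by $\B_\lambda m_\lambda=\A^{-1}\C_1^{-1}\A^{-1}u^\dagger$ and split
\[m_\lambda^\dagger-u^\dagger=(m_\lambda-u^\dagger)+(m_\lambda^\dagger-m_\lambda).\]
Subtracting $\B_\lambda u^\dagger$ from the defining equation for $m_\lambda$ and using \eqref{eq:int6} identifies the bias as $m_\lambda-u^\dagger=-\lambda\,\B_\lambda^{-1}\C_0^{-1}u^\dagger$, and subtracting this from \eqref{eq:int11} identifies the stochastic fluctuation as $m_\lambda^\dagger-m_\lambda=n^{-1/2}\,\B_\lambda^{-1}\A^{-1}\C_1^{-1}\xi$. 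Since the fluctuation is mean-zero, taking expectations yields
\[\E^{y^\dagger}\norm{m_\lambda^\dagger-u^\dagger}_\eta^2\le 2\lambda^2\norm{\B_\lambda^{-1}\C_0^{-1}u^\dagger}_\eta^2+\frac{2}{n}\,\E\norm{\B_\lambda^{-1}\A^{-1}\C_1^{-1}\xi}_\eta^2.\]

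Next I would estimate each term separately. For the bias, the regularity hypothesis $u^\dagger\in X^\gamma$ combined with the operator bounds on $\B_\lambda^{-1}$ gives an estimate of the form $\lambda^2\norm{\B_\lambda^{-1}\C_0^{-1}u^\dagger}_\eta^2\lesssim\lambda^{p(\gamma,\eta)}\norm{u^\dagger}_\gamma^2$, where the exponent $p$ grows with $\gamma$ up to a ceiling attained at $\gamma=\Delta+1$; this filter saturation is the source of the dichotomy between cases (i) and (ii). For the variance I would rewrite
\[\E\norm{\B_\lambda^{-1}\A^{-1}\C_1^{-1}\xi}_\eta^2=\tr\bigl(\C_0^\eta\,\B_\lambda^{-1}\A^{-1}\C_1^{-1}\A^{-1}\B_\lambda^{-1}\C_0^\eta\bigr),\]
use the algebraic identity $\A^{-1}\C_1^{-1}\A^{-1}=\B_\lambda-\lambda\C_0^{-1}$ to collapse the middle factor, and then apply the norm equivalences to express the trace as a weighted sum of negative powers of the eigenvalues of $\C_0$. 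Summability of this sum is critical at exponent $1-s_0$, which forces the insertion of an arbitrarily small $\varepsilon>0$ and gives a bound of the form $n^{-1}\lambda^{-q(\eta,s_0,\varepsilon)}$.

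Adding the two estimates produces $\E^{y^\dagger}\norm{m_\lambda^\dagger-u^\dagger}_\eta^2\lesssim\lambda^{p(\gamma,\eta)}+n^{-1}\lambda^{-q(\eta,s_0,\varepsilon)}$; balancing yields $\lambda^\star\sim n^{-1/(p+q)}$, equivalently $\tau^\star=(n\lambda^\star)^{-1/2}$, and substituting back reproduces the three rates in (i), (ii), (iii). The case $\gamma=1$, $\theta=1$ is singular because then $p(1,1)=0$: the bias does not vanish with $\lambda$ and no choice of $\tau$ yields convergence, whereas for $\theta<1$ the gap between the $X^\eta$ norm and the $X^1$ norm reintroduces a positive power of $\lambda$ in the bias. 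The main obstacle will be establishing and sharply tracking the singular-parameter operator bounds on $\B_\lambda^{-1}$ between weighted spaces at the interpolation index $\eta=(1-\theta)(\beta-2\ell)+\theta$; the endpoint norms $X^{\beta-2\ell}$ and $X^1$ are precisely those induced by the two summands of $\B_\lambda$, and the interpolation between them is what generates the $(2-\theta)$- and $(1-\theta)$-factors that appear in the final exponents.
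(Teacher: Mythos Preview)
Your bias--variance decomposition $m_\lambda^\dagger-u^\dagger=-\lambda\B_\lambda^{-1}\C_0^{-1}u^\dagger+n^{-1/2}\B_\lambda^{-1}\A^{-1}\C_1^{-1}\xi$ is exactly equation \eqref{eq:main5} of the paper, and the overall strategy of applying the singular operator norm bounds on $\B_\lambda^{-1}$ from Proposition \ref{pdelem1} and then balancing in $\tau$ is also the paper's route: Theorem \ref{pdecor1} is obtained as a corollary of Theorem \ref{pdeth1} after substituting the optimal $\theta_1=(s_0+\varepsilon)/\Delta$ and $\theta_2=\max\{(\Delta+1-\gamma)/\Delta,0\}$ from Remark \ref{r1}. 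Your account of the bias term and of the saturation mechanism at $\gamma=\Delta+1$ matches the paper. (A small slip: since the fluctuation has mean zero the two terms are orthogonal and you get equality without the factor $2$; the paper uses this.) A minor structural difference is that the paper first bounds the error at the two endpoint norms $X^{\beta-2\ell}$ and $X^1$, optimizes $\tau$ (the same $\tau$ optimizes both), and only then interpolates to $X^\eta$, rather than working directly at the interpolation index.

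The variance step is where you diverge, and as stated there is a gap. The paper does \emph{not} pass to a trace or use the identity $\A^{-1}\C_1^{-1}\A^{-1}=\B_\lambda-\lambda\C_0^{-1}$ for this term; instead it treats $\A^{-1}\C_1^{-1}\xi$ directly as a random element of $X^{-\eta_1}$ via Assumption \ref{a2}(\ref{a2v}) and the noise-regularity Lemma \ref{l2}(iii), giving $\E\|\A^{-1}\C_1^{-1}\xi\|_{-\eta_1}^2\le c\,\E\|\xi\|_{2\beta-2\ell-\eta_1}^2<\infty$ whenever $\theta_1>s_0/\Delta$, and then applies Proposition \ref{pdelem1} once more. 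Your plan to ``express the trace as a weighted sum of negative powers of the eigenvalues of $\C_0$'' is the natural move in the simultaneously diagonalizable case, but here Assumptions \ref{a2} are operator-norm equivalences, not eigenvalue-by-eigenvalue comparisons, and $\B_\lambda^{-1}$ is not diagonal in the $\C_0$-basis, so the trace does not reduce to such a sum. The paper does bound a related trace $\tr(\B_\lambda^{-1})$ (equation \eqref{eq:main7}, via Proposition \ref{pdelem3}), but that is for the posterior-spread term in Theorem \ref{pdeth2}, and it routes through the operator norm of $\C_0^{-s/2}\B_\lambda^{-1}\C_0^{-s/2}$ rather than an eigenvalue sum; if you try to adapt that technique to $\tr(\C_0^{-\eta/2}\B_\lambda^{-1}\C_0^{-\eta/2})$ you need $s>s_0+\eta$ together with $s\le 1$, which fails for $\eta$ near $1$. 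The paper's direct approach via Proposition \ref{pdelem1} and noise regularity avoids this restriction.
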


\begin{thm}\label{pdecor2}Assume $u^\dagger\in X^\gamma$, where $\gamma\geq1$. Under the Assumptions \ref{a2}, we have the following optimized rates for the convergence in (\ref{eq:main1}), where $\varepsilon>0$ is arbitrarily small:
\begin{enumerate}
\item[i)]if $\gamma\in[1,\Delta+1]$ for $\tau=\tau(n)=n^{-\frac{\gamma-1+s_0+\varepsilon}{2(\Delta+\gamma-1+s_0+\varepsilon)}}$
\begin{equation}\varepsilon_n=\left\{\begin{array}{ll}n^{-\frac{\gamma}{2(\Delta+\gamma-1+s_0+\varepsilon)}} , & if 
\;\mbox{$\beta-2\ell\leq0$} 
                                     \\ n^{-\frac{\Delta+\gamma-1}{2(\Delta+\gamma-1+s_0+\varepsilon)}}, & otherwise;
                                    \end{array}\right.\nonumber\end{equation}

\item[ii)]if $\gamma>\Delta+1$  for $\tau=\tau(n)=n^{-\frac{\Delta+s_0+\varepsilon}{2(2\Delta+s_0+\varepsilon)}}$

\begin{equation}\varepsilon_n=\left\{\begin{array}{ll}n^{-\frac{\Delta+1}{2(2\Delta+s_0+\varepsilon)}} , & if 
\;\mbox{$\beta-2\ell\leq0$} 
                                     \\ n^{-\frac{\Delta}{2\Delta+s_0+\varepsilon}}, & otherwise.
                                    \end{array}\right.\nonumber\end{equation}
\end{enumerate}\qed
\end{thm}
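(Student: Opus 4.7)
The plan is to reduce Theorem \ref{pdecor2} to a bias--variance decomposition. Since $\mu^{y^\dagger}_{\lambda,n}=\G(m_\lambda^\dagger,\C)$, Markov's inequality gives
\[
\mu^{y^\dagger}_{\lambda,n}\bigl\{u:\|u-u^\dagger\|\geq M_n\varepsilon_n\bigr\}\leq \frac{1}{M_n^2\varepsilon_n^2}\bigl(\|m_\lambda^\dagger-u^\dagger\|^2+\tr(\C)\bigr),
\]
and, because $\C=\frac{1}{n}\B_\lambda^{-1}$ is deterministic, taking $\E^{y^\dagger}$ reduces the theorem to showing that $\E^{y^\dagger}\|m_\lambda^\dagger-u^\dagger\|^2$ and $\tr(\C)$ are both $O(\varepsilon_n^2)$ at the prescribed $\tau=\tau(n)$ (equivalently $\lambda=1/(n\tau^2)$).

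For the bias, I would invoke Theorem \ref{pdecor1} with the value of $\theta$ that makes the interpolation index $\eta=(1-\theta)(\beta-2\ell)+\theta$ control $\|\cdot\|=\|\cdot\|_0$. When $\beta-2\ell\leq 0$ (so $\Delta\geq 1$), the choice $\theta=(\Delta-1)/\Delta\in[0,1)$ yields $\eta=0$ exactly; substituting into Theorem \ref{pdecor1} collapses the bias exponent $(\Delta+\gamma-1-\theta\Delta)$ to $\gamma$ in case (i) and $(2-\theta)\Delta$ to $\Delta+1$ in case (ii), producing the first subcase of each $\varepsilon_n^2$ in the statement. When $\beta-2\ell>0$, no $\theta\in[0,1]$ can reach $\eta=0$, so I would take $\theta=0$ (whence $\eta=\beta-2\ell>0$) and use the continuous embedding $X^{\beta-2\ell}\hookrightarrow X$ following from the definition of $X^\eta$ via powers of $\C_0$; the exponents $(\Delta+\gamma-1)/(\Delta+\gamma-1+s_0+\varepsilon)$ and $2\Delta/(2\Delta+s_0+\varepsilon)$ then deliver the second subcase. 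The limiting case $\gamma=1$ is covered continuously, since part (iii) of Theorem \ref{pdecor1} coincides with part (i) at $\gamma=1$.

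What remains is the variance $\tr(\C)=\frac{1}{n}\tr(\B_\lambda^{-1})$. I would deploy the operator-norm and localization bounds on the singularly perturbed precision $\B_\lambda$ from Section \ref{sec:normbounds}, converting them into a spectral estimate for $\tr(\B_\lambda^{-1})$ through the prior eigenvalue decay encoded by $s_0$; the resulting bound on $\tr(\C)$, evaluated at $\lambda=1/(n\tau^2)$ with the stated $\tau(n)$, must come out of order $\varepsilon_n^2$ or smaller. I expect the main obstacle to lie in this step: passing from operator-norm bounds on $\B_\lambda^{-1}$ to a sharp trace-class estimate whose $\lambda$-exponent is tight (up to the $\varepsilon$ slack) so that the variance does not exceed the bias at the optimal $\tau(n)$, a subtlety amplified by the non-diagonalizable regime where the spectra of $\B_\lambda$, $\C_0$, $\C_1$ and $\A^{-1}$ do not decouple. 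Once both bounds are in hand, combining them with the Markov estimate above yields $\E^{y^\dagger}\mu^{y^\dagger}_{\lambda,n}\{\|u-u^\dagger\|\geq M_n\varepsilon_n\}=O(M_n^{-2})$, which tends to $0$ for every $M_n\to\infty$, establishing \eqref{eq:main1}.
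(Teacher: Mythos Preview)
Your approach is correct and essentially matches the paper's. The paper derives Theorem \ref{pdecor2} as a direct corollary of Theorem \ref{pdeth2} together with the parameter choices in Remark \ref{r1}; you instead split the square posterior contraction into the MISE term, handled via Theorem \ref{pdecor1} (itself Theorem \ref{pdeth1} specialized by Remark \ref{r1}), and the posterior spread $\tr(\C)$. Since the proof of Theorem \ref{pdeth2} is exactly the proof of Theorem \ref{pdeth1} augmented by a trace bound, the two routes have identical content. Your computations for the MISE part, including the choice $\theta=(2\ell-\beta)/\Delta$ giving $\eta=0$ when $\beta-2\ell\le 0$ and the embedding argument when $\beta-2\ell>0$, are correct.

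The trace step you single out as the obstacle is resolved in the paper by a single conjugation identity (equation \eqref{eq:main7}): for white noise $\zeta$ and any $s>s_0$,
\[
\tr(\B_\lambda^{-1})=\E\langle\zeta,\B_\lambda^{-1}\zeta\rangle=\E\langle\C_0^{s/2}\zeta,\C_0^{-s/2}\B_\lambda^{-1}\C_0^{-s/2}\C_0^{s/2}\zeta\rangle\le \norm{\C_0^{-s/2}\B_\lambda^{-1}\C_0^{-s/2}}_{\mathcal{L}(\X)}\,\E\norm{\C_0^{s/2}\zeta}^2,
\]
the last factor being finite by Lemma \ref{l1}(i). Proposition \ref{pdelem3}, from the section you cite, then gives $\norm{\C_0^{-s/2}\B_\lambda^{-1}\C_0^{-s/2}}_{\mathcal{L}(\X)}\le c\lambda^{-(2\ell-\beta+s)/\Delta}$ for $s\in((\beta-2\ell)\vee s_0,1]$. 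Evaluated at the stated $\tau(n)$, and with $s$ taken close to $s_0$ (or, when $\beta-2\ell>0$, with $s\le s_0+\beta-2\ell$ so that $(2\ell-\beta+s)/\Delta\le s_0/\Delta$), the bound $\frac{1}{n}\tr(\B_\lambda^{-1})$ is dominated by the noise contribution already absorbed into your MISE estimate, hence is $O(\varepsilon_n^2)$. No additional subtlety arises from the non-diagonal setting.
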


To summarize, provided the problem is sufficiently ill-posed and the true solution $u^\dagger$ is sufficiently regular we get the convergence in (\ref{eq:main1}) for \begin{equation}\varepsilon_n=n^{-\frac{\gamma\wedge(\Delta+1)}{2(\Delta+\gamma\wedge(\Delta+1)-1+s_0+\varepsilon)}}.\nonumber\end{equation} 

\begin{figure}[htp]
            \begin{center}\includegraphics[type=pdf, ext=.pdf, read=.pdf, width=90mm]{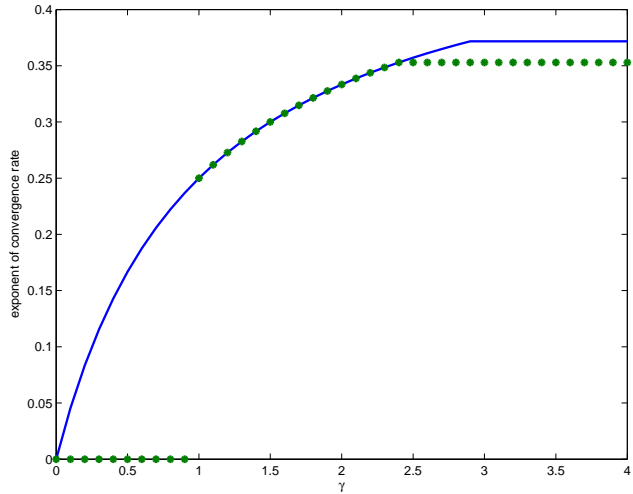} \caption{Exponents of rates of contraction plotted against the regularity of the true solution, $\gamma$. In blue are the sharp convergence rates obtained in the diagonal case in~\cite{1232.62079}, while in green the rates predicted by our method, which applies to the more general non-diagonal case}
          \label{fig}\end{center}
\end{figure}
Our rates of convergence agree, up to $\varepsilon>0$ arbitrarily small, with the sharp convergence rates obtained in the diagonal case in~\cite{1232.62079} across a wide range of regularity assumptions on the true solution (Figure \ref{fig}); yet, our rates cover a much more applicable range of non-simultaneously diagonalizable problems. (The reason for the appearance of $\varepsilon$ is that in the assumed non-diagonal setting we can only use information about the regularity of the noise as expressed in terms of the spaces $X^\rho$ (cf. Lemma \ref{l2}), rather than the explicit representation of the noise.)

The rates we obtain are not as strong as in the simultaneously diagonalizable case when the true solution is too regular; in particular our rates saturate earlier as a function of increasing regularity, and we require a certain degree of regularity of the true solution in order to secure convergence. 
It is not known if our results can be improved but it would be interesting to try. Both of the two discrepancies 
are attributed to the fact that our method relies on interpolating between rates in a strong and a weak norm of the error $e=m_\lambda^\dagger-u^\dagger$; on the one hand the rate of the error in the weak norm saturates earlier, and on the other hand the error in the strong norm requires additional regularity in order to converge (cf. Section \ref{sec:diag}).



\section{The Setting}\label{sec:assumptions}
In this section we present the setting in which we formulate our results.
First, we define the spaces in which we work, in particular, we define the Hilbert scale induced by the prior covariance operator $\C_0$. Then we define the probability measures relevant to our analysis. Furthermore, we state our main assumptions, which concern the decay of the eigenvalues of $\C_0$ and the connections between the operators $\C_0$, $\C_1$  and $ \A^{-1}$, and present
 regularity results for draws from the prior, $\mu_0$, and the noise distribution, $\G(0,\C_1)$.
Finally we briefly overview the way in which the Hilbert scale
defined in terms of the prior covariance operator $\C_0$, which
is natural for our analysis, links to scales of spaces
defined independently of any prior model.

We start by defining the Hilbert scale which we will use in our analysis. Recall that $\X$ is an infinite-dimensional
separable Hilbert space and $\C_0\colon\X\to\X$ is a self-adjoint, positive-definite, trace class, linear operator. Since $\C_0\colon \X\to\X$ is injective and self-adjoint we have that $\X=\overline{\ra(\C_0)}\oplus\ra(\C_0)^{\perp}=\overline{\ra(\C_0)}$. This means that $\C_0^{-1}\colon\ra(\C_0)\to\X$ is a densely defined, unbounded, symmetric, positive-definite, linear operator in $\X$. Hence it can be extended to a self-adjoint operator with domain $\D(\C_0^{-1})\coloneq\{u\in\X: \C_0^{-1}u\in\X\}$; this is the Friedrichs extension~\cite{MR1892228}.
Thus, we can define the Hilbert scale $(X^t)_{t\in\R}$, with $X^t\coloneq \overline{\mathcal{M}}^{{\|.\|}_{t}}$~\cite{MR1408680}, where\begin{equation}\mathcal{M}\coloneq \bigcap_{l=0}^\infty\D(\C_0^{-l}),\;\pr{u}{v}_{t}\coloneq \big\langle\C_0^{-\frac{t}2}u,\C_0^{-\frac{t}2}v\big\rangle \quad\text{and}\quad {\|u\|}_{t}\coloneq \norm{\C_0^{-\frac{t}2}u}.\nonumber\end{equation}
The bounded linear operator $\C_1\colon\X\to\X$ is assumed to be self-adjoint, positive-definite (but not necessarily trace class); thus $\C_1^{-1}\colon\ra(\C_1)\to\X$ can be extended in the same way to a self-adjoint operator with domain $\D(\C_1^{-1})\coloneq\{u\in\X: \C_1^{-1}u\in\X\}.$ Finally, recall that we assume that $\A\colon\D(\A)\to\X$ is a self-adjoint and positive-definite, linear operator with bounded inverse, $\A^{-1}\colon\X\to\X$.

We assume that we have a probability space $(\Omega,\mathcal{F},\mathcal{P})$. 
The expected value is denoted by $\E$ and $\xi\sim\mu$ means that the law of the random variable $\xi$ is the measure $\mu$.

Let $\mu_0:=\G(0,\tau^2\C_0)$ and $\rp_0:=\G(0,\frac1n\C_1)$ be the prior and noise distributions respectively. Furthermore, let $\nu(du,dy)$ denote the measure constructed by taking $u$ and $y|u$ as independent Gaussian random variables $\G(0,\tau^2\C_0)$ and $\G(\A^{-1}u,\frac1n\C_1)$ respectively:
 \begin{equation}\nu(du,dy)=\rp(dy|u)\mu_0(du),\nonumber\end{equation} where $\rp\coloneq\G(\A^{-1}u,\frac1n\C_1)$. We denote by $\nu_0(du,dy)$  the measure constructed by taking $u$ and $y$ as independent Gaussian random variables $\G(0,\tau^2\C_0)$ and $\G(0,\frac1n\C_1)$ respectively: \begin{equation}\nu_0(du,dy)=\rp_0(dy)\otimes\mu_0(du).\nonumber\end{equation}

Let $\{\lambda_k^2, \phi_k\}_{k=1}^\infty$ be orthonormal eigenpairs of $\C_0$ in $\X$. Thus, $\{\lambda_k\}_{k=1}^\infty$ are the singular values and $\{\phi_k\}_{k=1}^\infty$ an orthonormal eigenbasis. Since $\C_0$ is trace class we have that $\sumi\lambda_k^2<\infty$. In fact we require a slightly stronger assumption see Assumption \ref{a2}(\ref{a1}) below.

\subsection{Assumptions}
We are now ready to present our assumptions. The first assumption enables us to quantify the regularity of draws from the prior whereas the rest of the assumptions regard interrelations between the three operators $\C_0$, $\C_1$ and $\A^{-1}$; these assumptions reflect the idea that  \begin{equation}\C_1\simeq \C_0^{\beta}\quad \text{and}\quad \A^{-1}\simeq \C_0^\ell,\nonumber\end{equation} for some $\beta\geq0, \ell\geq0$, where $\simeq$ is used in the same manner as in Section \ref{sec:intro}. This is made precise by the inequalities presented in the following assumption, where the notation  $a \asymp b$ means that there exist constants $c, c'>0$ such that $ca\le b\le c' a$. 

\begin{ass}\label{a2}Suppose there exist $s_0\in[0,1)$, $\beta\geq0, \;\ell\geq0$ and constants $c_i>0, i=1,..,4$ such that\begin{enumerate}
\item\label{a1}$\C_0^s$ is trace class for all $s>s_0$;
\item\label{a2delta} $\Delta>2 s_0$, where $\Delta\coloneq2\ell-\beta+1;$
\item\label{a2i}$\norm{\C_1^{-\frac12}\A^{-1}u}\asymp \norm{\C_0^{\ell-\frac{\beta}2}u}, \quad\forall u\in X^{\beta-2\ell};$
\item\label{a2ii} $\norm{\C_0^{-\frac{\rho}2}\C_1^{\frac12}u}\leq c_1\norm{\C_0^\frac{\beta-\rho}2u}, \quad\forall u\in X^{\rho-\beta}, \;\forall \rho\in[\lceil\beta-s_0-1\rceil,\beta- s_0);$ 
\item\label{a2iii}$\norm{\C_0^{\frac{s}2}\C_1^{-\frac12}u}\leq c_2\norm{\C_0^{\frac{s-\beta}2}u}, \quad\forall u\in X^{\beta-s}, \;\forall s\in(s_0,1];$
\item\label{a2iv}$\norm{\C_0^{-\frac{s}2}\C_1^{-\frac12}\A^{-1}u}\leq c_3\norm{\C_0^{\frac{2\ell-\beta-s}2}u}, \quad\forall u\in X^{s+\beta-2\ell}, \;\forall s\in(s_0,1];$

\item\label{a2v}$\norm{\C_0^{\frac{\eta}2}\A^{-1}\C_1^{-1}u}\leq c_4\norm{\C_0^{\frac{\eta}2+\ell-\beta}u}, \quad\forall u\in X^{2\beta-2\ell-\eta}, \;\forall \eta\in [\beta-2\ell, 1].$
\end{enumerate}
\end{ass}

Notice that, by Assumption \ref{a2}(\ref{a2delta}) we have $2\ell-\beta>-1$ which, in combination with Assumption \ref{a2}(\ref{a2i}), implies that \begin{equation}\pr{\C_1^{-\frac12}\A^{-1}u}{\C_1^{-\frac12}\A^{-1}u}+\lambda\pr{\C_0^{-\frac12}u}{\C_0^{-\frac12}u}\leq c\pr{\C_0^{-\frac12}u}{\C_0^{-\frac12}u}, \quad\forall u\in X^1,\nonumber\end{equation} capturing the idea that the regularization through $\C_0$ is indeed a regularization. In fact the assumption $\Delta>2s_0$ connects the ill-posedness of the problem to the regularity of the prior. We exhibit this connection in the following example:
\begin{ex} Assume $\A, \C_1$ and $\C_0$ are simultaneously diagonalizable, with eigenvalues having algebraic decay $k^{2t}$, $k^{-2r}$ and $k^{-2\alpha}$, respectively, for $t,r\geq0$ and $\alpha>\frac12$ so that  $\C_0$ is trace class. Then Assumptions (\ref{a1}),(\ref{a2i})-(\ref{a2v}) are trivially satisfied with $\ell=\frac{t}{\alpha},$ $\beta=\frac{r}{\alpha}$ and  $s_0=\frac{1}{2\alpha}$. The Assumption (\ref{a2delta}) $\Delta>2s_0$ is then equivalent to $\alpha>1+r-2t$. That is, for a certain degree of ill-posedness (encoded in the difference $2t-r$) we have a minimum requirement on the regularity of the prior (encoded in $\alpha$). 
Put differently, for a certain prior, we require a minimum degree of ill-posedness.
\end{ex}
We refer the reader to Section \ref{sec:ex} for nontrivial examples satisfying Assumptions \ref{a2}.
 
In the following, we exploit the regularity properties of a white noise to determine the regularity of draws from the prior and the noise distributions using Assumption \ref{a2}(\ref{a1}). We consider a white noise to be a draw from $\G(0,I)$, that is a random variable $\zeta\sim\G(0,I)$. Even though the identity operator is not trace class in $\X$, it is trace class in a bigger space $X^{-s}$, where $s>0$ is sufficiently large.

\begin{lem}\label{l1}Under the Assumption \ref{a2}(\ref{a1}) we have:
\begin{enumerate}
\item[i)]Let $\zeta$ be a white noise. Then $\E\norm{\C_0^{\frac{s}2}\zeta}^2<\infty$ for all $s>s_0$.
\item[ii)]Let $u\sim\mu_0$. Then $u\in X^{1-s}$ $\;\mu_0$-a.s. for every $s>s_0$.
\end{enumerate}
\end{lem}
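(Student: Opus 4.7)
The strategy is to expand both $\zeta$ and $u$ in the orthonormal eigenbasis $\{\phi_k\}$ of $\C_0$, reducing both statements to the trace class condition in Assumption \ref{a2}(\ref{a1}). Throughout I will use that $\C_0^{t/2}\phi_k=\lambda_k^t\phi_k$ for every $t\in\R$, which follows from the spectral theorem applied to the self-adjoint Friedrichs extension of $\C_0^{-1}$.

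For part (i), I would formally write a white noise as
\begin{equation}
\zeta=\sum_{k=1}^\infty \zeta_k\phi_k,\qquad \zeta_k:=\pr{\zeta}{\phi_k}\overset{\text{i.i.d.}}{\sim}\G(0,1),\nonumber
\end{equation}
which makes sense as a series converging in $X^{-s}$ for $s$ large enough (since the identity is trace class on that larger space, as noted in the excerpt). Applying $\C_0^{s/2}$ termwise gives $\C_0^{s/2}\zeta=\sum_k\lambda_k^s\zeta_k\phi_k$, and by Parseval together with Tonelli's theorem,
\begin{equation}
\E\norm{\C_0^{\frac{s}{2}}\zeta}^2=\E\sum_{k=1}^\infty\lambda_k^{2s}\zeta_k^2=\sum_{k=1}^\infty\lambda_k^{2s}=\tr(\C_0^s).\nonumber
\end{equation}
Assumption \ref{a2}(\ref{a1}) then gives finiteness for all $s>s_0$.

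For part (ii), I would use the Karhunen--Lo\`eve expansion of a draw from $\mu_0=\G(0,\tau^2\C_0)$, namely $u=\tau\sum_k\lambda_k\zeta_k\phi_k$ with $\zeta_k\overset{\text{i.i.d.}}{\sim}\G(0,1)$, or equivalently $u\stackrel{d}{=}\tau\C_0^{1/2}\zeta$ where $\zeta$ is a white noise. Then
\begin{equation}
\norm{u}_{1-s}^2=\norm{\C_0^{-\frac{1-s}{2}}u}^2=\tau^2\sum_{k=1}^\infty\lambda_k^{2s}\zeta_k^2,\nonumber
\end{equation}
whose expectation is $\tau^2\tr(\C_0^s)<\infty$ for $s>s_0$ by the same argument as above. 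Hence $\norm{u}_{1-s}<\infty$ $\mu_0$-a.s., so $u\in X^{1-s}$ $\mu_0$-a.s., and one can observe that (ii) actually follows directly from (i) via the identification $u\stackrel{d}{=}\tau\C_0^{1/2}\zeta$.

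The argument is essentially a standard consequence of the spectral decomposition, so there is no real obstacle; the only point requiring care is the interpretation of the white noise, which lives a priori in a larger space $X^{-s}$, but this does not affect the calculation of $\E\norm{\C_0^{s/2}\zeta}^2$ because the map $\zeta\mapsto\C_0^{s/2}\zeta$ is bounded from $X^{-s}$ into $\X$ for $s>s_0$ (by the trace class assumption) and the series manipulations are justified by Tonelli.
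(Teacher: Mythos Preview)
Your proof is correct and follows essentially the same approach as the paper: both reduce parts (i) and (ii) to the trace class condition $\tr(\C_0^s)<\infty$ for $s>s_0$, with (ii) obtained from (i) via the identification $u\stackrel{d}{=}\tau\C_0^{1/2}\zeta$. The only cosmetic difference is that the paper phrases (i) abstractly by noting $\C_0^{s/2}\zeta\sim\G(0,\C_0^s)$ so that $\E\norm{\C_0^{s/2}\zeta}^2=\tr(\C_0^s)$, whereas you compute the same trace explicitly through the eigenbasis expansion.
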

\begin{proof}{\ }
\begin{enumerate}
\item[i)]We have that $\C_0^\frac{s}2\zeta\sim\G(0,\C_0^s)$, thus $\E\norm{\C_0^\frac{s}2\zeta}^2<\infty$ is equivalent to $\C_0^s$ being of trace class. By the Assumption $\ref{a2}(\ref{a1})$ it suffices to have $s>s_0$.
\item[ii)]We have $\E\norm{\C_0^{\frac{s-1}2}u}^2=\E\norm{\C_0^{\frac{s}2}\C_0^{-\frac12}u}^2=\E\norm{\C_0^{\frac{s}2}\zeta}^2$, where $\zeta$ is a white noise, therefore using part (i) we get the result.
\end{enumerate}
\end{proof}

\begin{rem}\label{remark1}
Note that as $s_0$ changes, both the Hilbert scale and the decay of the coefficients of a draw from $\mu_0$ change. The norms $\|{\cdot}\|_t$ are defined through powers of the eigenvalues $\lambda_k^2$. If $s_0>0$, then $\C_0$ has eigenvalues that decay like $k^{-\frac{1}{s_0}}$, thus an element $u\in X^t$ has coefficients $\pr{u}{\phi_k}$, that decay faster than $k^{-\frac12-\frac{t}{2s_0}}$. 
As $s_0$ gets closer to zero, the space $X^t$ for fixed $t>0$, corresponds to a faster decay rate of the coefficients. At the same time, by the last lemma, draws from $\mu_0=\G(0,\C_0)$ belong to $X^{1-s}$ for all $s>s_0$. Consequently, as $s_0$ gets smaller, not only do draws from $\mu_0$ belong to $X^{1-s}$ for smaller $s$, but also the spaces $X^{1-s}$ for fixed $s$ reflect faster decay rates of the coefficients.  The case $s_0=0$ corresponds to $\C_0$ having eigenvalues that decay faster than any negative power of $k$. A draw from $\mu_0$ in that case has coefficients that decay faster than any negative  power of $k$. 
\end{rem}

In the next lemma, we use the interrelations between the operators $\C_0, \C_1, \A^{-1}$ to obtain additional regularity properties of draws from the prior, and also determine the regularity of draws from the noise distribution and the joint distribution of the unknown and the data.
\begin{lem}\label{l2}
Under the Assumptions \ref{a2} we have:
\begin{enumerate}
\item[i)]$u\in X^{s_0+\beta-2\ell+\varepsilon} \;\;\mu_0$-a.s. for all $0<\varepsilon<(\Delta-2 s_0)\wedge(1-s_0);$
\item[ii)]$\A^{-1}u\in\D(\C_1^{-\frac12}) \;\;\mu_0$-a.s.$;$ 
\item[iii)]$\xi\in X^\rho \;\;\rp_0$-a.s. for all $\rho<\beta- s_0;$
\item[iv)]$y\in X^{\rho}$ $\nu$-a.s. for all $\rho<\beta-s_0$.
\end{enumerate}
\end{lem}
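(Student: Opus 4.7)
The plan is to handle each of the four parts in turn, using Lemma \ref{l1} to transfer regularity from a cylindrical white noise and Assumption \ref{a2} to move between the three operators $\C_0$, $\C_1$ and $\A^{-1}$.

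For part (i), I would invoke Lemma \ref{l1}(ii), which supplies $u\in X^{1-s}$ $\mu_0$-a.s.\ for every $s>s_0$, and then use the nesting of the Hilbert scale. It suffices to find some $s>s_0$ with $1-s\geq s_0+\beta-2\ell+\varepsilon$, i.e.\ $s\leq\Delta-s_0-\varepsilon$; such an $s$ exists exactly when $\varepsilon<\Delta-2s_0$. The auxiliary bound $\varepsilon<1-s_0$ in the statement simply keeps the target exponent below $1-s_0$, the supremum of regularities delivered by Lemma \ref{l1}(ii). Part (ii) then falls out immediately: part (i) with $s_0+\varepsilon>0$ shows $u\in X^{\beta-2\ell}$ $\mu_0$-a.s., and Assumption \ref{a2}(\ref{a2i}) gives $\norm{\C_1^{-1/2}\A^{-1}u}\asymp\norm{\C_0^{\ell-\beta/2}u}=\norm{u}_{\beta-2\ell}<\infty$ almost surely, which is the defining condition for $\A^{-1}u\in\D(\C_1^{-\frac12})$.

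For part (iii), I would realize $\xi\sim\G(0,\C_1)$ in distribution as $\xi=\C_1^{1/2}\zeta$ with $\zeta$ a cylindrical white noise, via the Karhunen--Loève expansion of $\C_1$. Since $\rho<\beta-s_0$ implies $\zeta\in X^{\rho-\beta}$ $\rp_0$-a.s., one can apply Assumption \ref{a2}(\ref{a2ii}) pathwise and combine with Lemma \ref{l1}(i) to obtain
\begin{equation*}
\E\norm{\C_0^{-\rho/2}\xi}^2\leq c_1^2\,\E\norm{\C_0^{(\beta-\rho)/2}\zeta}^2=c_1^2\,\tr(\C_0^{\beta-\rho})<\infty
\end{equation*}
for $\rho$ in the admissible window $[\lceil\beta-s_0-1\rceil,\beta-s_0)$ of (\ref{a2ii}); the conclusion $\xi\in X^\rho$ then extends to all $\rho<\beta-s_0$ by the nesting $X^{\rho'}\subset X^\rho$ for $\rho'\geq\rho$.

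Part (iv) is where I expect the main obstacle. The plan is to split $y=\A^{-1}u+\frac{1}{\sqrt n}\xi$ under $\nu$: the noise contribution already lies in $X^\rho$ $\rp_0$-a.s.\ by (iii), so the real work is to show $\A^{-1}u\in X^\rho$ $\mu_0$-a.s. None of the hypotheses in Assumption \ref{a2} lets one slide $\A^{-1}$ directly past $\C_0^{-\rho/2}$ for $\rho$ in the required range, so I would instead route through the noise covariance: by part (ii), $w:=\C_1^{-1/2}\A^{-1}u$ lies in $\X$ $\mu_0$-a.s., whence $\A^{-1}u=\C_1^{1/2}w$, and Assumption \ref{a2}(\ref{a2ii}) applied to $w$ yields
\begin{equation*}
\norm{\C_0^{-\rho/2}\A^{-1}u}=\norm{\C_0^{-\rho/2}\C_1^{1/2}w}\leq c_1\norm{\C_0^{(\beta-\rho)/2}w},
\end{equation*}
which is finite $\mu_0$-a.s.\ because $\C_0^{(\beta-\rho)/2}$ is a bounded operator on $\X$ for $\rho<\beta$. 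Combining with the noise bound and invoking nesting one last time produces $y\in X^\rho$ $\nu$-a.s.\ for every $\rho<\beta-s_0$, completing the argument.
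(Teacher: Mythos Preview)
Your arguments for parts (i)--(iii) are essentially identical to the paper's: Lemma~\ref{l1}(ii) plus nesting for (i), Assumption~\ref{a2}(\ref{a2i}) together with $u\in X^{\beta-2\ell}$ for (ii), and the representation $\xi=\C_1^{1/2}\zeta$ combined with Assumption~\ref{a2}(\ref{a2ii}) and Lemma~\ref{l1}(i) for (iii).

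Part (iv) is where you and the paper diverge. You split $y=\A^{-1}u+\frac{1}{\sqrt n}\xi$ and bound $\norm{\A^{-1}u}_\rho$ directly, writing $\A^{-1}u=\C_1^{1/2}w$ with $w=\C_1^{-1/2}\A^{-1}u\in\X$ from part (ii) and then reusing Assumption~\ref{a2}(\ref{a2ii}) on $w$ (legitimate since $w\in\X\subset X^{\rho-\beta}$ for $\rho<\beta$). The paper instead notes that part (ii) places $\A^{-1}u$ in the Cameron--Martin space of $\rp_0=\G(0,\frac1n\C_1)$, so the shifted measure $\rp=\G(\A^{-1}u,\frac1n\C_1)$ is $\mu_0$-a.s.\ equivalent to $\rp_0$, and the conclusion of part (iii) transfers wholesale from $\xi$ to $y$. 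Your route is more elementary and self-contained, requiring nothing beyond the standing assumptions; the paper's route is shorter and conceptually cleaner, trading a second pass through Assumption~\ref{a2}(\ref{a2ii}) for an appeal to the Cameron--Martin theorem. Both are correct.
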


\begin{proof}{\ }\begin{enumerate}
\item[i)]We can choose an $\varepsilon$ as in the statement by the Assumption \ref{a2}(\ref{a2delta}).
By Lemma \ref{l1}(ii), it suffices to show that $s_0+\beta-2\ell+\varepsilon<1-s_0$. Indeed, $s_0+\beta-2\ell+\varepsilon=s_0+1-\Delta+\varepsilon<1-s_0.$
\item[ii)]Under Assumption \ref{a2}(\ref{a2i}) it suffices to show that $u\in X^{\beta-2\ell}$. Indeed, by Lemma \ref{l1}(ii), we need to show that $\beta-2\ell<1-s_0$, which is true since $s_0\in[0,1)$ and we assume $\Delta>2 s_0\geq s_0$, thus $2\ell-\beta+1>s_0.$
\item[iii)]It suffices to show it for any $\rho\in[\lceil\beta-s_0-1\rceil,\beta-s_0)$. Noting that $\zeta=\C_1^{-\frac12}\xi$ is a white noise, using Assumption \ref{a2}(\ref{a2ii}), we have by Lemma \ref{l1}(i)
\begin{equation}\E{\|\xi\|}^2_{\rho}=\E\norm{\C_0^{-\frac{\rho}2}\C_1^{\frac12}\C_1^{-\frac12}\xi}^2\leq c\E\norm{\C_0^{\frac{\beta-\rho}2}\zeta}^2<\infty,\nonumber\end{equation}
 since $\beta-\rho>s_0$. 
\item[iv)]By (ii) we have that $\A^{-1}u$ is $\mu_0$-a.s. in the Cameron-Martin space of the Gaussian measures $\rp$ and $\rp_0$, thus the measures $\rp$ and $\rp_0$ are $\mu_0$-a.s. equivalent~\cite[Theorem 2.8]{MR2244975} and (iii) gives the result.\end{enumerate}
\end{proof}

\subsection{Guidelines for applying the theory}

The theory is naturally developed in the scale of Hilbert
spaces defined via the prior. However application of
the theory may be more natural in a different functional
setting. We explain how the two may be connected.
Let $\{\psi_k\}_{k\in\N}$ be an orthonormal basis of the separable Hilbert space $\X$. We define the spaces $\hat{X}^t, \; t\in\R$ as follows: for $t>0$ we set 
\begin{equation}\hat{X}^t:=\{u\in\X:\; \sumi k^{2t}\pr{u}{\psi_k}^2<\infty\}\nonumber\end{equation} and the spaces $\hat{X}^{-t}, \;t>0$ are defined by duality, $\hat{X}^{-t}:=(\hat{X}^{t})^\ast.$ 

For example, if we restrict ourselves to functions on a periodic domain $D=[0,L]^d$ and assume that  $\{\psi_k\}_{k\in\N}$ is the Fourier basis of $\X=L^2(D)$, then the spaces $\hat{X}^t$ can be identified with the Sobolev spaces of periodic functions $H^t$, by rescaling: $H^t=\hat{X}^{\frac{t}{d}}$~\cite[Proposition 5.39]{MR1881888}. 

In the case $s_0>0$, as explained in Remark \ref{remark1} we have algebraic decay of the eigenvalues of $\C_0$ and in particular $\lambda_k^2$ decay like $k^{-\frac{1}{s_0}}$. If $\C_0$ is diagonalizable in the basis $\{\psi_k\}_{k\in\N}$, that is, if $\phi_k=\psi_k, \;k\in\N$, then it is straightforward to identify the spaces $X^t$ with the spaces $\hat{X}^{\frac{t}{2s_0}}$.  The advantage of this identification is that the spaces $\hat{X}^{t}$ do not depend on the prior so one can use them as a fixed reference point for expressing regularity, for example of the true solution. 

In our subsequent analysis, we will require that the true solution lives in the Cameron-Martin space of the prior $X^1$, which in different choices of the prior (different $s_0$) is a different space. Furthermore, we will assume that the true solution lives in $X^{\gamma}$ for some $\gamma\geq1$ and provide the convergence rate depending on the parameters $\gamma, s_0, \beta, \ell$.  The identification $X^\gamma=\hat{X}^{\frac{\gamma}{2s_0}}$ and the intuitive relation between the spaces $\hat{X}^t$ and the Sobolev spaces, enable us to understand the meaning of the assumptions on the true solution.

We can now formulate the following guidelines for applying the theory presented in the present paper: we work in a separable Hilbert space $\X$ with an orthonormal basis $\{\psi_k\}_{k\in\N}$ and we have some prior knowledge about the true solution $u^\dagger$ which can be expressed in terms of the spaces $\hat{X}^t$. The noise is assumed to be Gaussian $\G(0,\C_1)$, and the forward operator is known; that is, $\C_1$ and $\A^{-1}$ are known. We choose the prior $\G(0,\C_0)$, that is, we choose the covariance operator $\C_0$, and we can determine the value of $s_0$. If the operator $\C_0$ is chosen to be diagonal in the basis $\{\psi_k\}_{k\in\N}$ then we can find the regularity of the true solution in terms of the spaces $X^t$, that is, the value of $\gamma$ such that $u^\dagger\in X^\gamma$, and check that $\gamma\geq1$ which is necessary for our theory to work. We then find the values of $\beta$ and $\ell$ and calculate the value of $\Delta$ appearing in Assumption \ref{a2},  checking that our choice of the prior is such that $\Delta>2s_0$. We now have all the necessary information required for applying the Theorems \ref{pdecor1} and \ref{pdecor2} presented in Section \ref{sec:mainresults} to get the rate of convergence. 

\begin{rem}
Observe that in the above mentioned example of periodic functions, we have the identification $X^1=H^{\frac{d}{2s_0}}$, thus since $s_0<1$ we have that the assumption $u^\dagger\in X^1$ implies that $u^\dagger\in H^t$, for $t>\frac{d}2$. By the Sobolev embedding theorem~\cite[Theorem 5.31]{MR1881888}, this implies that the true solution is always assumed to be continuous. However, this is not a disadvantage of our method, since in many cases a Gaussian measure which charges $L^2(D)$ with probability one, can be shown to also charge the space of continuous functions with probability one~\cite[Lemma 6.25]{MR2652785} 
\end{rem}

\section{Properties of the Posterior Mean and Covariance}\label{sec:prmean}
We now make sense of the equation $(\ref{eq:int7})$ weakly in the space $X^1$, under the assumptions presented in the previous section. To do so, we define the operator $\B_\lambda$ from (\ref{eq:int6}) in $X^1$ and examine its properties. In Section \ref{sec:justification} we demonstrate that (\ref{eq:int4}) and (\ref{eq:int7}) do indeed correspond to the posterior covariance and mean. 

Consider the equation \begin{equation}\label{eq:pm1}\B_\lambda w= r, \end{equation} where \begin{equation}\B_\lambda=\A^{-1}\C_1^{-1}\A^{-1}+\lambda \C_0^{-1}.\nonumber\end{equation}
Define the bilinear form $B\colon  X^1 \times X^1\to \R$, \begin{equation}B(u,v):=\pr{\C_1^{-\frac12}\A^{-1} u}{\C_1^{-\frac12}\A^{-1}v}+\lambda\pr{\C_0^{-\frac12}u}{\C_0^{-\frac12}v}, \;\;\forall u,v\in X^1.\nonumber\end{equation}
\begin{defn}\label{weak}
Let $r\in X^{-1}$. An element $w\in X^1$ is called a weak solution of (\ref{eq:pm1}), if  \begin{equation}B(w,v)=\pr{r}{v}, \;\forall v\in X^1.\nonumber\end{equation}
\end{defn}

\begin{prop}\label{laxmil}
Under the Assumptions \ref{a2}(\ref{a2delta}) and (\ref{a2i}), for any $r\in X^{-1}$, there exists a unique weak solution $w\in X^1$ of (\ref{eq:pm1}).
\end{prop}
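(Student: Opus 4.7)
The plan is to invoke the Lax--Milgram theorem applied to the bilinear form $B$ on the Hilbert space $X^1$ with inner product $\langle u,v\rangle_1=\langle\C_0^{-1/2}u,\C_0^{-1/2}v\rangle$. To do this I need three ingredients: the functional $v\mapsto\langle r,v\rangle$ is bounded on $X^1$, the form $B$ is coercive on $X^1$, and $B$ is bounded on $X^1\times X^1$.

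Boundedness of the right-hand side is immediate: by definition $X^{-1}$ is the topological dual of $X^1$ (identifying $\X$ as the pivot space), so for $r\in X^{-1}$ the map $v\mapsto\langle r,v\rangle$ is a bounded linear functional on $X^1$ with norm $\|r\|_{-1}$.

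Coercivity is the easy part, and this is where the extra parameter $\lambda>0$ saves us. Dropping the (nonnegative) term involving $\C_1^{-1/2}\A^{-1}$ yields
\begin{equation*}
B(u,u)\;\geq\;\lambda\,\langle\C_0^{-1/2}u,\C_0^{-1/2}u\rangle\;=\;\lambda\,\|u\|_1^2,\qquad\forall u\in X^1,
\end{equation*}
so the coercivity constant is $\lambda$.

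The main obstacle, and really the only place where the assumptions beyond $\lambda>0$ enter, is boundedness of $B$. For the second term in $B(u,v)$, Cauchy--Schwarz gives $|\langle\C_0^{-1/2}u,\C_0^{-1/2}v\rangle|\le\|u\|_1\|v\|_1$. For the first term, Cauchy--Schwarz together with Assumption \ref{a2}(\ref{a2i}) yields
\begin{equation*}
\bigl|\langle\C_1^{-1/2}\A^{-1}u,\C_1^{-1/2}\A^{-1}v\rangle\bigr|
\;\leq\;\|\C_1^{-1/2}\A^{-1}u\|\,\|\C_1^{-1/2}\A^{-1}v\|
\;\leq\;c'\,\|u\|_{\beta-2\ell}\,\|v\|_{\beta-2\ell},
\end{equation*}
where I used $\|\C_0^{\ell-\beta/2}u\|=\|u\|_{\beta-2\ell}$. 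Now Assumption \ref{a2}(\ref{a2delta}) gives $\beta-2\ell=1-\Delta<1-2s_0\leq1$, and the Hilbert scale is monotone (higher index corresponds to stronger norm), so $\|u\|_{\beta-2\ell}\leq C\|u\|_1$. Combining, $|B(u,v)|\leq C'(1+\lambda)\|u\|_1\|v\|_1$.

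With these three ingredients the Lax--Milgram theorem supplies a unique $w\in X^1$ satisfying $B(w,v)=\langle r,v\rangle$ for every $v\in X^1$, which is exactly the weak formulation of \eqref{eq:pm1} per Definition \ref{weak}. The only delicate point, worth highlighting, is the use of Assumption \ref{a2}(\ref{a2delta}) to guarantee that $\beta-2\ell<1$, i.e.\ that the $\C_1^{-1/2}\A^{-1}$ term is dominated in norm by the prior-induced inner product on $X^1$; without this the bilinear form would not even be defined on $X^1$.
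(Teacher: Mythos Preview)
Your proof is correct and follows essentially the same approach as the paper: apply Lax--Milgram on $X^1$, verifying coercivity by dropping the first term and boundedness via Assumption \ref{a2}(\ref{a2i}) combined with the embedding $X^1\hookrightarrow X^{\beta-2\ell}$ (which follows from $\Delta>0$, a consequence of Assumption \ref{a2}(\ref{a2delta})). The paper's argument is identical in structure and detail.
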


\begin{proof}
We use the Lax-Milgram theorem in the Hilbert space $X^1$, since $r\in X^{-1}=(X^1)^\ast.$ 
\begin{enumerate}
\item[i)]$B\colon X^1\times X^1\to\R$ is coercive: \begin{equation}B(u,u)=\norm{\C_1^{-\frac12}\A^{-1}u}^2+\lambda\norm{\C_0^{-\frac12}u}^2\geq\lambda{\|u\|}_{1}^2, \;\forall u\in X^1.\nonumber\end{equation}
\item[ii)]$B\colon X^1\times X^1\to\R$ is continuous: indeed  by the Cauchy-Schwarz inequality and the Assumptions \ref{a2}(\ref{a2delta}) and (\ref{a2i}),   
\begin{align*}\vert B(u,v)\vert&\leq \norm{\C_1^{-\frac12}\A^{-1}u}\norm{\C_1^{-\frac12}\A^{-1}v}+\lambda\norm{\C_0^{-\frac12}u}\norm{\C_0^{-\frac12}v}\\
&\leq c{\|u\|}_{{\beta-2\ell}}\norm{v}_{{\beta-2\ell}}+\lambda{\|u\|}_{1}\norm{v}_{1}\leq c'{\|u\|}_{1}\norm{v}_{1}, \;\forall u,v\in X^1.\end{align*} 
\end{enumerate}
\end{proof}

\begin{rem}\label{yosida}
The Lax-Milgram theorem defines a bounded operator $\Sc:X^{-1}\to X^1$, such that $B(\Sc r,v)=\pr{r}{v}$ for all $v\in X^1$, which has a bounded inverse $\Sc^{-1}:X^1\to X^{-1}$ such that $B(w,v)=\pr{\Sc^{-1}w}{v}$ for all $v\in X^1$. Henceforward, we identify $\B_\lambda\equiv \Sc^{-1}$ and $\B_\lambda^{-1}\equiv \Sc$.
Furthermore, note that in Proposition \ref{laxmil}, Lemma \ref{id} below, and the three propositions in Section \ref{sec:normbounds}, we
only require $\Delta>0$ and not the stronger assumption $\Delta>2s_0$. However, in all our other results we actually need $\Delta>2s_0.$

\end{rem}

\begin{lem}\label{id}
Suppose the Assumptions \ref{a2}(\ref{a2delta}) and (\ref{a2i}) hold. Then the operator $\Sc^{-1}=\B_\lambda:X^{1}\to X^{-1}$ is identical to the operator $\A^{-1}\C_1^{-1}\A^{-1}+\lambda \C_0^{-1}: X^{1}\to X^{-1}$, where $\A^{-1}\C_1^{-1}\A^{-1}$ is defined weakly in $X^{\beta-2\ell}$.\end{lem}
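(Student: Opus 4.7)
The plan is to show that both operators, when regarded as maps $X^{1}\to X^{-1}$, realize exactly the bilinear form $B$ used in the Lax-Milgram construction of $\Sc^{-1}$, and then to appeal to the uniqueness supplied by the Riesz representation on $X^{1}$. The only nontrivial point is to give $\A^{-1}\C_{1}^{-1}\A^{-1}$ a clean meaning as a bounded operator into $X^{-1}$ despite the unboundedness of $\C_{1}^{-1}$; once this is in hand, the identification is immediate.

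First I would check that $\C_{0}^{-1}\colon X^{1}\to X^{-1}$ is bounded, which is just the shift-by-two property of the Hilbert scale: for $u\in X^{1}$ and any $v\in X^{1}$ one has the identity $\langle \C_{0}^{-1}u,v\rangle=\langle \C_{0}^{-1/2}u,\C_{0}^{-1/2}v\rangle$, whose right-hand side is bounded by $\|u\|_{1}\|v\|_{1}$. Next I would define $\A^{-1}\C_{1}^{-1}\A^{-1}$ weakly on $X^{\beta-2\ell}$ as announced in the statement: since $\Delta>0$ we have $1>\beta-2\ell$, so $X^{1}\hookrightarrow X^{\beta-2\ell}$, and by Assumption~\ref{a2}(\ref{a2i}) the map $\C_{1}^{-1/2}\A^{-1}$ extends to a bounded operator $X^{\beta-2\ell}\to\X$. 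Consequently the form $(u,v)\mapsto \langle \C_{1}^{-1/2}\A^{-1}u,\C_{1}^{-1/2}\A^{-1}v\rangle$ is bounded on $X^{\beta-2\ell}\times X^{\beta-2\ell}$ (and hence on $X^{1}\times X^{1}$), so by Riesz representation there is a unique bounded operator, which we call $\A^{-1}\C_{1}^{-1}\A^{-1}\colon X^{\beta-2\ell}\to X^{2\ell-\beta}$, restricting to a bounded operator $X^{1}\to X^{-1}$, such that
\begin{equation}
\langle \A^{-1}\C_{1}^{-1}\A^{-1}u,v\rangle=\langle \C_{1}^{-1/2}\A^{-1}u,\C_{1}^{-1/2}\A^{-1}v\rangle,\quad \forall u,v\in X^{1}.\nonumber
\end{equation}

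Putting the two pieces together, for every $u,v\in X^{1}$,
\begin{equation}
\langle(\A^{-1}\C_{1}^{-1}\A^{-1}+\lambda \C_{0}^{-1})u,v\rangle=\langle\C_{1}^{-1/2}\A^{-1}u,\C_{1}^{-1/2}\A^{-1}v\rangle+\lambda\langle\C_{0}^{-1/2}u,\C_{0}^{-1/2}v\rangle=B(u,v).\nonumber
\end{equation}
By Remark~\ref{yosida}, the operator $\Sc^{-1}\colon X^{1}\to X^{-1}$ is characterized by $\langle \Sc^{-1}w,v\rangle=B(w,v)$ for all $v\in X^{1}$. Since elements of $X^{-1}$ are uniquely determined by their action on $X^{1}$, the two operators $\Sc^{-1}$ and $\A^{-1}\C_{1}^{-1}\A^{-1}+\lambda \C_{0}^{-1}$ must coincide on $X^{1}$.

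The only real obstacle is conceptual rather than computational: checking that $\A^{-1}\C_{1}^{-1}\A^{-1}$ can be reinterpreted as a map into the larger space $X^{-1}$ without ever actually inverting $\C_{1}$ pointwise. Assumption~\ref{a2}(\ref{a2i}), which provides the two-sided comparison $\|\C_{1}^{-1/2}\A^{-1}u\|\asymp\|u\|_{\beta-2\ell}$ together with the inclusion $X^{1}\subset X^{\beta-2\ell}$ guaranteed by $\Delta>0$, is precisely what makes this reinterpretation legitimate; nothing beyond this and the Hilbert scale definitions is needed.
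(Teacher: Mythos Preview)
Your proof is correct and follows essentially the same approach as the paper: both arguments define $\A^{-1}\C_{1}^{-1}\A^{-1}$ weakly on $X^{\beta-2\ell}$ via the bilinear form $\langle \C_{1}^{-1/2}\A^{-1}u,\C_{1}^{-1/2}\A^{-1}v\rangle$, use $\Delta>0$ to restrict it to a bounded map $X^{1}\to X^{-1}$, and then identify $\B_{\lambda}$ with the sum by matching the bilinear form $B$. The only cosmetic difference is that the paper invokes Lax--Milgram (obtaining invertibility of $\A^{-1}\C_{1}^{-1}\A^{-1}$ as a by-product, not needed here) and organizes the identification by subtracting off $\lambda\C_{0}^{-1}$ first, whereas you use Riesz representation directly and compare the full operators; your route is slightly more economical.
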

\begin{proof}
The Lax-Milgram theorem implies that $\B_\lambda:X^1\to X^{-1}$ is bounded. Moreover, $\C_0^{-1}:X^1\to X^{-1}$ is bounded, thus the operator $K\coloneq \B_\lambda- \lambda\C_0^{-1}: X^1\to X^{-1}$ is also bounded and satisfies \begin{equation}\label{star1}\pr{Ku}{v}=\pr{\C_1^{-\frac12}\A^{-1}u}{\C_1^{-\frac12}\A^{-1}v}, \;\forall u,v \in X^1.\end{equation}
Define $\A^{-1}\C_1^{-1}\A^{-1}$ weakly in $X^{\beta-2\ell}$, by the bilinear form $A:X^{\beta-2\ell}\times X^{\beta-2\ell}\to\R$ given by \begin{equation}A(u,v)=\pr{\C_1^{-\frac12}\A^{-1}u}{\C_1^{-\frac12}\A^{-1}v}, \;\forall u,v \in X^{\beta-2\ell}.\nonumber\end{equation} By Assumption \ref{a2}(\ref{a2i}), $A$ is coercive and continuous in $X^{\beta-2\ell}$, thus by the Lax-Milgram theorem, there exists a uniquely defined, boundedly invertible, operator $T:X^{2\ell-\beta}\to X^{\beta-2\ell}$ such that $A(u,v)=\pr{T^{-1}u}{v}$ for all $v\in X^{\beta-2\ell}$. We identify $\A^{-1}\C_1^{-1}\A^{-1}$ with the bounded operator $T^{-1}:X^{\beta-2\ell}\to X^{2\ell-\beta}$. By Assumption \ref{a2}(\ref{a2delta}) we have $\Delta>0$ hence \begin{equation}\norm{\A^{-1}\C_1^{-1}\A^{-1}u}_{-1}\leq c\norm{\A^{-1}\C_1^{-1}\A^{-1}u}_{2\ell-\beta}\leq c\norm{u}_{\beta-2\ell}\leq c\norm{u}_1, \;\forall u\in X^1,\nonumber\end{equation} that is, $\A^{-1}\C_1^{-1}\A^{-1}: X^{1}\to X^{-1}$ is bounded. By the definition of $T^{-1}=\A^{-1}\C_{1}^{-1}\A^{-1}$ and $(\ref{star1})$, this implies that $K=\B_\lambda-\lambda\C_0^{-1}=\A^{-1}\C_1^{-1}\A^{-1}$.
\end{proof}

\begin{prop}\label{meanlem}
Under the Assumptions \ref{a2}(\ref{a1}),(\ref{a2delta}),(\ref{a2i}),(\ref{a2ii}),(\ref{a2v}), there exists a unique weak solution, $m\in X^1$ of equation (\ref{eq:int7}), $\nu(du,dy)$-almost surely.
\end{prop}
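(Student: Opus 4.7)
The plan is to reduce the claim to Proposition \ref{laxmil} by showing that the right-hand side of (\ref{eq:int7}), namely $\A^{-1}\C_1^{-1}y$, lies in the dual space $X^{-1}$ for $\nu$-almost every $y$. Once that is established, existence and uniqueness of the weak solution $m \in X^1$ follow immediately from Proposition \ref{laxmil} (whose hypotheses \ref{a2delta} and \ref{a2i} are included in our assumption list).

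The regularity of the data $y$ under $\nu$ is exactly what Lemma \ref{l2}(iv) provides: $y \in X^\rho$ $\nu$-a.s.\ for every $\rho < \beta - s_0$ (this part of Lemma \ref{l2} relies on Assumptions \ref{a1}, \ref{a2i}, \ref{a2ii}, all of which are at our disposal). The content of Assumption \ref{a2}(\ref{a2v}) is precisely a mapping property for $\A^{-1}\C_1^{-1}$ into negative-index spaces of the Hilbert scale: rewriting the assumed inequality in the $\|\cdot\|_t$ notation, it says
\begin{equation}
\|\A^{-1}\C_1^{-1}u\|_{-\eta} \leq c_4 \|u\|_{2\beta-2\ell-\eta}, \qquad \forall \eta \in [\beta-2\ell,1].\nonumber
\end{equation}
Taking the endpoint $\eta = 1$ (legitimate since $\beta-2\ell = 1 - \Delta < 1$ by \ref{a2delta}) yields
\begin{equation}
\|\A^{-1}\C_1^{-1}y\|_{-1} \leq c_4 \|y\|_{2\beta-2\ell-1}.\nonumber
\end{equation}

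So the key arithmetic check is that the index $2\beta-2\ell-1$ is strictly less than the Sobolev ceiling $\beta - s_0$ guaranteed by Lemma \ref{l2}(iv). This inequality rearranges to $2\ell - \beta + 1 > s_0$, i.e.\ $\Delta > s_0$, which is weaker than Assumption \ref{a2}(\ref{a2delta})'s hypothesis $\Delta > 2s_0 \geq s_0$. Consequently one may choose $\rho$ with $2\beta - 2\ell - 1 < \rho < \beta - s_0$, and Lemma \ref{l2}(iv) places $y$ in $X^\rho \hookrightarrow X^{2\beta - 2\ell - 1}$ almost surely, so $\A^{-1}\C_1^{-1}y \in X^{-1}$ $\nu$-a.s. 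Invoking Proposition \ref{laxmil} with $r = \A^{-1}\C_1^{-1}y$ delivers the unique weak solution $m \in X^1$, and the proof is complete.

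There is no genuine obstacle here: the result is essentially a bookkeeping check that the ill-posedness parameter $\Delta$ is large enough (relative to the prior-regularity parameter $s_0$) for the datum $y$, which lives only slightly below the regularity $\beta$ of the noise, to be tamed by the smoothing operator $\A^{-1}\C_1^{-1}$ into the dual of the energy space $X^1$. The only point that requires a moment's care is verifying that $\eta = 1$ is an admissible choice in Assumption \ref{a2}(\ref{a2v}), which as noted above uses $\Delta > 0$.
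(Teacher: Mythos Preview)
Your proof is correct and follows essentially the same approach as the paper's own proof: reduce to Proposition \ref{laxmil} by showing $\A^{-1}\C_1^{-1}y\in X^{-1}$ $\nu$-a.s., apply Assumption \ref{a2}(\ref{a2v}) at $\eta=1$, invoke Lemma \ref{l2}(iv) for the regularity of $y$, and verify the index condition $2\beta-2\ell-1<\beta-s_0$ via $\Delta>s_0$. Your write-up is slightly more explicit (for instance, you note that $\eta=1$ is admissible because $\beta-2\ell=1-\Delta<1$), but the argument is the same.
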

\begin{proof}
It suffices to show that $\A^{-1}\C_1^{-1}y\in X^{-1},$ $\nu(du,dy)$-almost surely. Indeed, by Lemma \ref{l2}(iv) we have that $y\in X^\rho$ $\nu(du,dy)$-a.s. for all $\rho<\beta-s_0$, thus by the Assumption \ref{a2}(\ref{a2v}) \begin{equation}\norm{\C_0^{\frac12}\A^{-1}\C_1^{-1}y}\leq c\norm{\C_0^{\frac12+\ell-\beta}y}<\infty,\nonumber\end{equation} since $2\beta-2\ell-1<\beta-s_0$, which holds by the Assumption \ref{a2}(\ref{a2delta}).
\end{proof}


\section{Characterization of the Posterior using Precision Operators}\label{sec:justification}

Suppose that in the problem (\ref{eq:int1}) we have $u\sim\mu_0=\G(0, \C_0)$ and  $\xi\sim\G(0,\C_1)$, where $u$ is independent of $\xi$. Then we have that $y|u\sim\rp=\G(\A^{-1}u, \frac1n\C_1)$. Let $\mu^y$ be the posterior measure on $u|y$. 
 
In this section we prove a number of facts concerning the posterior measure $\mu^y$ for $u|y$. First, in Theorem \ref{prop0} we prove that this measure has density with respect to the prior measure $\mu_0$, identify this density and show that $\mu^y$ is Lipschitz in $y$, with respect to the Hellinger metric. Continuity in $y$ will require the introduction of the space $X^{s+\beta-2\ell}$, to which $u$ drawn from $\mu_0$ belongs almost surely. Secondly, we prove Theorem \ref{justth}, where we show that $\mu^y$ is Gaussian and identify the covariance and mean via equations (\ref{eq:int4}) and (\ref{eq:int7}). This identification will form the basis for our analysis of posterior contraction in the following section. 
 
 \begin{thm}\label{prop0}Under the Assumptions \ref{a2}(\ref{a1}),(\ref{a2delta}),(\ref{a2i}),(\ref{a2ii}),(\ref{a2iii}),(\ref{a2iv}), the posterior measure $\mu^y$ is absolutely continuous with respect to $\mu_0$ and 
\begin{equation}\label{eq:rn}\frac{d\mu^y}{d\mu_0}(u)=\frac{1}{Z(y)}\exp(-\Phi(u,y)), \end{equation}
where \begin{equation}\label{eq:Phi}\Phi(u,y)\coloneq \frac{n}2\norm{\C_1^{-\frac12}\A^{-1}u}^2-n\pr{\C_1^{-\frac12}y}{\C_1^{-\frac12}\A^{-1}u}\end{equation} and $Z(y)\in(0,\infty)$ is the normalizing constant.
Furthermore, the map $y\mapsto\mu^y$ is Lipschitz continuous, with respect to the Hellinger metric: let $s=s_0+\varepsilon$, $\;0<\varepsilon<(\Delta-2 s_0)\wedge(1-s_0)$; then there exists $c=c(r)$ such that for all $y, y'\in X^{\beta-s}$ with ${\|y\|}_{{\beta-s}}, \norm{y'}_{{\beta-s}}\leq r$
\begin{equation}d_{\mathrm{Hell}}(\mu^y,\mu^{y'})\leq c\norm{y-y'}_{{\beta-s}}.\nonumber\end{equation}
Consequently, the $\mu^y$-expectation of any polynomially bounded function \\ $f\colon X^{s+\beta-2\ell}\to E,$ where $(E,\|\cdot\|_{E})$ is a Banach space, is locally Lipschitz continuous in $y$. In particular, the posterior mean is locally Lipschitz continuous in $y$ as a function $X^{\beta-s}\to X^{s+\beta-2\ell}$.\end{thm}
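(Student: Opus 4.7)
The plan has three parts mirroring the three conclusions. First, for the Radon--Nikodym identification, I would build the posterior from the joint law. Let $\nu(du,dy)=\rp(dy\mid u)\mu_0(du)$ and $\nu_0(du,dy)=\rp_0(dy)\otimes\mu_0(du)$ be the reference product measure. Lemma \ref{l2}(ii) guarantees that $\A^{-1}u$ lies in the Cameron--Martin space of $\rp_0=\G(0,\tfrac1n\C_1)$ for $\mu_0$-almost every $u$, so the Cameron--Martin theorem yields $\rp\sim\rp_0$ $\mu_0$-a.s.\ with density $\exp(-\Phi(u,y))$ (the inner product in $\Phi$ interpreted via the Paley--Wiener isometry when $\C_1$ is not trace class). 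Hence $\nu\ll\nu_0$ with the same density, and applying the conditional form of Bayes' rule (disintegration along $y$) yields $\mu^y$ with density $\exp(-\Phi(u,y))/Z(y)$ with respect to $\mu_0$, where $Z(y)=\int e^{-\Phi(u,y)}\,d\mu_0(u)$.

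Second, for the Hellinger bound I would adapt the standard well-posedness framework for Bayesian inverse problems. Set $s=s_0+\varepsilon$ as in the statement. The workhorse estimates are the pointwise inequalities
\begin{align*}
|\Phi(u,y)| &\leq c(1+\|u\|_{s+\beta-2\ell}^2)(1+\|y\|_{\beta-s}),\\
|\Phi(u,y)-\Phi(u,y')| &\leq c n\,\|u\|_{s+\beta-2\ell}\,\|y-y'\|_{\beta-s}.
\end{align*}
The difference bound is obtained by inserting $\C_0^{s/2}\C_0^{-s/2}$ to write
\[
\Phi(u,y)-\Phi(u,y')=-n\bigl\langle \C_0^{s/2}\C_1^{-1/2}(y-y'),\;\C_0^{-s/2}\C_1^{-1/2}\A^{-1}u\bigr\rangle
\]
and then bounding the two factors with Assumptions \ref{a2}(\ref{a2iii}) and (\ref{a2iv}) respectively; the admissible range $s\in(s_0,1]$ in (\ref{a2iii}) is precisely what forces the choice $s=s_0+\varepsilon$ that appears in the statement. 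The quadratic term in $\Phi$ is treated analogously, with Assumption \ref{a2}(\ref{a2i}) giving $\|\C_1^{-1/2}\A^{-1}u\|\leq c\|u\|_{\beta-2\ell}$. Combined with Lemma \ref{l2}(i), which furnishes Fernique-type moments of $\|u\|_{s+\beta-2\ell}$ under $\mu_0$ of every order, these bounds yield the finiteness and strict positivity of $Z(y)$, a uniform lower bound on $Z(y)$ over $\{\|y\|_{\beta-s}\leq r\}$, and, via the routine manipulation
\[
d_{\mathrm{Hell}}(\mu^y,\mu^{y'})^2 \leq \frac{c}{Z(y')}\int\bigl(\Phi(u,y)-\Phi(u,y')\bigr)^2 e^{-\Phi(u,y)}\,d\mu_0 + c\,\frac{|Z(y)-Z(y')|^2}{Z(y)\,Z(y')},
\]
the claimed Hellinger Lipschitz estimate.

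Finally, the Lipschitz continuity of $\mu^y$-expectations of polynomially bounded $f\colon X^{s+\beta-2\ell}\to E$ follows directly from the general inequality
\[
\|\E^{\mu^y}f-\E^{\mu^{y'}}f\|_E \leq 2\bigl(\E^{\mu^y}\|f\|_E^2+\E^{\mu^{y'}}\|f\|_E^2\bigr)^{1/2}\,d_{\mathrm{Hell}}(\mu^y,\mu^{y'}),
\]
combined with uniform moment bounds on $\|u\|_{s+\beta-2\ell}$ under $\mu^y$ for $\|y\|_{\beta-s}\leq r$, which in turn come from the same Fernique estimates on $\mu_0$ and the uniform lower bound on $Z(y)$. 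The principal obstacle in all this is the careful exponent bookkeeping in the Hellinger step: one must verify that $s=s_0+\varepsilon$ sits inside the admissible ranges of Assumptions \ref{a2}(\ref{a2iii})--(\ref{a2iv}) and that every moment and normalisation bound is truly uniform over $\|y\|_{\beta-s}\leq r$; once these are in place the remaining manipulations are algebraic.
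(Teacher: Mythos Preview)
Your proposal is correct and follows essentially the same route as the paper. The paper establishes the same four-part estimate on $\Phi$ (your two displayed bounds correspond to parts (i), (ii) and (iv) of its Lemma~\ref{asslem}, with the lower bound $\Phi(u,y)\geq M(\delta,r)-\delta\|u\|_{s+\beta-2\ell}^2$ obtained via Young's inequality on the cross term), proves $Z(y)>0$ in a separate corollary, and then invokes \cite[Theorem~4.2]{MR2652785} as a black box for the Hellinger Lipschitz estimate rather than writing out the integral inequality you display; your explicit computation is exactly what that theorem encapsulates. One small point: your bound $|\Phi(u,y)|\leq c(1+\|u\|^2)(1+\|y\|)$ is not by itself enough to integrate $e^{-\Phi}$ uniformly in $r$ against Fernique, since the coefficient of $\|u\|^2$ grows with $\|y\|$; you do need the sharper lower bound with an arbitrarily small $\delta$ in front of $\|u\|^2$ (which, as you implicitly note, follows from dropping the nonnegative quadratic term and applying Young to the cross term).
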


The proofs of Theorem \ref{prop0} and Theorem \ref{justth} are presented in the next two subsections. Each proof is based on a series of lemmas. 

\subsection{Proof of Theorem \ref{prop0}}
In this subsection we prove Theorem \ref{prop0}. We first prove several useful estimates regarding $\Phi$ defined in (\ref{eq:Phi}), for $u\in X^{s+\beta-2\ell}$ and $y\in X^{\beta-s}$, where $s\in(s_0,1]$. Observe that, under the Assumptions \ref{a2}(\ref{a1}),(\ref{a2delta}),(\ref{a2i}),(\ref{a2ii}), for $s=s_0+\varepsilon$ where $\varepsilon>0$ sufficiently small, the Lemma \ref{l2} implies on the one hand that $u\in X^{s+\beta-2\ell}$ $\mu_0(du)$-almost surely and on the other hand that $y\in X^{\beta-s}$ $\nu(du,dy)$-almost surely.

\begin{lem}\label{asslem}
Under the Assumptions \ref{a2}(\ref{a1}),(\ref{a2i}),(\ref{a2iii}),(\ref{a2iv}), for any $s\in(s_0,1]$, the potential $\Phi$ given by (\ref{eq:Phi}) satisfies:
 
\begin{enumerate}
\item[i)]for every $\delta>0$ and $r>0,$ there exists an $M=M(\delta,r)\in\R,$ such that for  all $u\in X^{s+\beta-2\ell}$ and all $y\in X^{\beta-s}$ with ${\|y\|}_{{\beta-s}}\leq r,$  \begin{equation}\Phi(u,y)\geq M-\delta{\|u\|}_{{s+\beta-2\ell}}^2;\nonumber\end{equation}
\item[ii)]for every $r>0,$ there exists a $K=K(r)>0,$ such that for all $u\in X^{s+\beta-2\ell}$ and $y\in X^{\beta-s}$ with ${\|u\|}_{{s+\beta-2\ell}},{\|y\|}_{{\beta-s}}\leq r,$ \begin{equation}\Phi(u,y)\leq K;\nonumber\end{equation}
\item[iii)]for every $r>0,$ there exists an $L=L(r)>0,$ such that for all $u_1, u_2\in X^{s+\beta-2\ell}$ and $y\in X^{\beta-s}$  with ${\|u_1\|}_{{s+\beta-2\ell}}, {\|u_2\|}_{{s+\beta-2\ell}}, {\|y\|}_{{\beta-s}}\leq r,$ \begin{equation}|\Phi(u_1,y)-\Phi(u_2,y)|\leq L{\|u_1-u_2\|}_{{s+\beta-2\ell}};\nonumber\end{equation}
\item[iv)]for every $\delta>0$ and $r>0,$ there exists a $c=c(\delta,r)\in\R,$ such that for all $y_1, y_2\in X^{\beta-s}$ with ${\|y_1\|}_{{\beta-s}}, {\|y_2\|}_{{\beta-s}}\leq r$ and for all $u\in X^{s+\beta-2\ell},$ 
\begin{equation}\vert\Phi(u,y_1)-\Phi(u,y_2)\vert\leq \exp\left(\delta{\|u\|}^2_{{s+\beta-2\ell}}+c\right){\|y_1-y_2\|}_{{\beta-s}}.\nonumber\end{equation} 
\end{enumerate}
\end{lem}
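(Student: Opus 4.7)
The plan is to prove all four bounds by a single unified strategy: isolate the two building blocks of $\Phi$ and estimate each against the Hilbert-scale norms $\|\cdot\|_{s+\beta-2\ell}$ (on $u$) and $\|\cdot\|_{\beta-s}$ (on $y$). The quadratic term is handled by Assumption \ref{a2}(\ref{a2i}), which gives $\|\C_1^{-\frac12}\A^{-1}u\|^2\asymp \|u\|_{\beta-2\ell}^2$, and then by the monotonicity $\|u\|_{\beta-2\ell}\le c\|u\|_{s+\beta-2\ell}$ (valid for $s>0$ because $\C_0^{s/2}$ is a bounded operator). The cross term is handled by inserting the factor $\C_0^{s/2}\C_0^{-s/2}$ and applying Cauchy--Schwarz together with Assumptions \ref{a2}(\ref{a2iii}) and (\ref{a2iv}):
$$\bigl|\langle \C_1^{-\frac12}y,\C_1^{-\frac12}\A^{-1}u\rangle\bigr|
 =\bigl|\langle \C_0^{\frac{s}{2}}\C_1^{-\frac12}y,\,\C_0^{-\frac{s}{2}}\C_1^{-\frac12}\A^{-1}u\rangle\bigr|
 \le c_2 c_3\,\|y\|_{\beta-s}\,\|u\|_{s+\beta-2\ell}.$$
These two estimates are the workhorses; note that the algebraic range conditions $s\in(s_0,1]$ and $u\in X^{s+\beta-2\ell}$, $y\in X^{\beta-s}$ are exactly what is needed to invoke (\ref{a2iii}) and (\ref{a2iv}).

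Given these, the four items come out quickly. For (ii), under $\|u\|_{s+\beta-2\ell},\|y\|_{\beta-s}\le r$ both terms of $\Phi$ are bounded by a constant times $nr^2$. For (i), since the quadratic term is nonnegative we have $\Phi(u,y)\ge -n c r\,\|u\|_{s+\beta-2\ell}$, and Young's inequality $ab\le \delta a^2+\frac{b^2}{4\delta}$ absorbs the linear factor into $-\delta\|u\|_{s+\beta-2\ell}^2$, leaving $M=-\frac{(ncr)^2}{4\delta}$. For (iii), write the difference of quadratic terms as $\langle \C_1^{-\frac12}\A^{-1}(u_1+u_2),\C_1^{-\frac12}\A^{-1}(u_1-u_2)\rangle$ via polarization and apply (\ref{a2i}) to get a bound $ncr\,\|u_1-u_2\|_{\beta-2\ell}$, while the difference of the linear terms is controlled by the cross-term estimate with $y$ fixed; the two combine, after using monotonicity of the scale, into $L\|u_1-u_2\|_{s+\beta-2\ell}$.

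Part (iv) is the only one with a genuine wrinkle, and it is where I expect the main (minor) obstacle to sit: here only the linear term in $\Phi$ depends on $y$, so the cross-term estimate gives the polynomial bound
$$\bigl|\Phi(u,y_1)-\Phi(u,y_2)\bigr|\le n c\,\|u\|_{s+\beta-2\ell}\,\|y_1-y_2\|_{\beta-s},$$
and I must upgrade the prefactor $\|u\|_{s+\beta-2\ell}$ to the exponential form $\exp(\delta\|u\|_{s+\beta-2\ell}^{2}+c)$. This is handled by the elementary inequality $t\le \exp(\delta t^2+c(\delta))$ valid for all $t\ge 0$, with $c(\delta)$ chosen large enough to absorb $\log(nc)$ and the maximum of $\log t-\delta t^2$; the constant $c$ in the statement of the lemma depends only on $\delta$ and $r$, so this is consistent. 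No further machinery is required beyond the two workhorse estimates and this scalar inequality.
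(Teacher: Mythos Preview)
Your proposal is correct and follows essentially the same approach as the paper: both isolate the quadratic and cross terms, control the former via Assumption~\ref{a2}(\ref{a2i}) and the latter by inserting $\C_0^{s/2}\C_0^{-s/2}$ and applying Cauchy--Schwarz with Assumptions~\ref{a2}(\ref{a2iii}),(\ref{a2iv}), then finish (i) with Young's inequality, (iii) with a difference-of-squares factorization (the paper phrases this as a reverse triangle inequality, you as polarization --- same identity), and (iv) with the scalar bound $t\le\exp(\delta t^2+c(\delta))$.
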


\begin{proof}${\;}$
\begin{enumerate}
\item[i)]
By first using the Cauchy-Schwarz inequality, then the Assumptions \ref{a2} (\ref{a2iii}) and (\ref{a2iv}), and then the Cauchy with $\delta'$ inequality for $\delta'>0$ sufficiently small, we have  \begin{align*}\Phi(u,y)&=\frac{n}2\norm{\C_1^{-\frac12}\A^{-1}u}^2-n\pr{\C_0^{\frac{s}2}\C_1^{-\frac12}y}{\C_0^{-\frac{s}2}\C_1^{-\frac12}\A^{-1}u}\\
&\geq-n\norm{\C_0^{\frac{s}2}\C_1^{-\frac12}y}\norm{\C_0^{-\frac{s}2}\C_1^{-\frac12}\A^{-1}u}\geq-cn{\|y\|}_{{\beta-s}}{\|u\|}_{{s+\beta-2\ell}}\\&\geq -\frac{cn}{4\delta'}{\|y\|}_{{\beta-s}}^2-cn\delta'{\|u\|}^2_{{s+\beta-2\ell}}\geq M(r,\delta)-\delta{\|u\|}^2_{{s+\beta-2\ell}}.\end{align*}

\item[ii)]
 By the Cauchy-Schwarz  inequality and the Assumptions \ref{a2}(\ref{a2i}),(\ref{a2iii}) and (\ref{a2iv}), we have since $s>s_0\geq0$ \begin{align*}\Phi(u,y)
&\leq\frac{n}2\norm{\C_1^{-\frac12}\A^{-1}u}^2+n\norm{\C_0^{\frac{s}2}\C_1^{-\frac12}y}\norm{\C_0^{-\frac{s}2}\C_1^{-\frac12}\A^{-1}u}\\
&\leq c\frac{n}2{\|u\|}^2_{{\beta-2\ell}}+cn{\|y\|}_{{\beta-s}}{\|u\|}_{{s+\beta-2\ell}}\leq K(r).\end{align*} 

\item[iii)]
 By first using the Assumptions \ref{a2} (\ref{a2iii}) and (\ref{a2iv}) and the triangle inequality, and then the Assumption \ref{a2}(\ref{a2i}) and the reverse triangle inequality, we have since $s>s_0\geq0$
 \begin{equation}|\Phi(u_1,y)-\Phi(u_2,y)|=\nonumber\end{equation}\begin{equation}\frac{n}2\left|\norm{\C_1^{-\frac12}\A^{-1}u_1}^2-\norm{\C_1^{-\frac12}\A^{-1}u_2}^2+2\pr{\C_0^{\frac{s}2}\C_1^{-\frac12}y}{\C_0^{-\frac{s}2}\C_1^{-\frac12}\A^{-1}(u_2-u_1)}\right|\nonumber\end{equation}\begin{equation}\leq\frac{n}2\left|\norm{\C_1^{-\frac12}\A^{-1}u_1}^2-\norm{\C_1^{-\frac12}\A^{-1}u_2}^2\right|+cn{\|y\|}_{{\beta-s}}{\|u_1-u_2\|}_{{s+\beta-2\ell}}\nonumber\end{equation}\begin{equation}\leq cn{\|u_1-u_2\|}_{{\beta-2\ell}}\left({\|u_1\|}_{{\beta-2\ell}}+{\|u_2\|}_{{\beta-2\ell}}\right)+cnr{\|u_1-u_2\|}_{{s+\beta-2\ell}}\nonumber\end{equation}\begin{equation}\leq L(r){\|u_1-u_2\|}_{{s+\beta-2\ell}}.\nonumber\end{equation}
\item[iv)]
By first using the Cauchy-Schwarz inequality and then the Assumptions \ref{a2}(\ref{a2iii}) and (\ref{a2iv}), we have \begin{align*}|\Phi(u,y_1)-\Phi(u,y_2)|&=n\left|\pr{\C_0^{\frac{s}2}\C_1^{-\frac12}(y_1-y_2)}{\C_0^{-\frac{s}2}\C_1^{-\frac12}\A^{-1}u}\right|\\&\leq n\norm{\C_0^{\frac{s}2}\C_1^{-\frac12}(y_1-y_2)}\norm{\C_0^{-\frac{s}2}\C_1^{-\frac12}\A^{-1}u}\\&\leq cn{\|y_1-y_2\|}_{{\beta-s}}{\|u\|}_{{s+\beta-2\ell}}\\&\leq\exp\left(\delta\norm{u}^2_{s+\beta-2\ell}+c\right)\norm{y_1-y_2}_{\beta-s}.\end{align*}

\end{enumerate}
\end{proof}

\begin{cor}\label{c1}
Under the Assumptions \ref{a2}(\ref{a1}),(\ref{a2delta}),(\ref{a2i}),(\ref{a2iii}),(\ref{a2iv})\begin{equation}Z(y)\coloneq \int_{\X}\exp(-\Phi(u,y))\mu_0(du)>0,\nonumber\end{equation} for all $y\in X^{\beta-s}, s=s_0+\varepsilon$ where $0<\varepsilon<(\Delta-2s_0)\wedge(1-s_0)$. In particular, if in addition the Assumption \ref{a2}(\ref{a2ii}) holds, then $Z(y)>0$ $\nu$-almost surely.
\end{cor}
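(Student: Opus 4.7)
The plan is to show $Z(y) > 0$ by bounding the integrand from below on a set of positive $\mu_0$-measure, using the estimates already established in Lemma \ref{asslem} together with the regularity of draws from the prior given in Lemma \ref{l2}.

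First I would observe that the choice $s = s_0 + \varepsilon$ with $0 < \varepsilon < (\Delta - 2s_0) \wedge (1 - s_0)$ is exactly the admissible range in Lemma \ref{l2}(i), so that draws $u \sim \mu_0$ belong to $X^{s+\beta-2\ell}$ with probability one. Moreover $s \in (s_0, 1]$, placing us in the setting of Lemma \ref{asslem}. In particular, for fixed $y \in X^{\beta-s}$, the potential $\Phi(u,y)$ is a well-defined real number for $\mu_0$-a.e. $u$: finite from above by Lemma \ref{asslem}(ii) on bounded balls of $X^{s+\beta-2\ell}$, and finite from below (everywhere on $X^{s+\beta-2\ell}$) by Lemma \ref{asslem}(i). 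Consequently $\exp(-\Phi(u,y))$ is a strictly positive, measurable function $\mu_0$-a.s.

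Next, to produce a strictly positive lower bound on the integral, I would restrict to the ball
\[
B_r \coloneq \{u \in X^{s+\beta-2\ell} : \|u\|_{s+\beta-2\ell} \leq r\}.
\]
By Lemma \ref{asslem}(ii), for $u \in B_r$ and $y$ with $\|y\|_{\beta-s} \leq r$, we have $\Phi(u,y) \leq K(r)$, hence
\[
Z(y) \geq \int_{B_r} \exp(-\Phi(u,y))\,\mu_0(du) \geq e^{-K(r)}\,\mu_0(B_r).
\]
Since Lemma \ref{l2}(i) yields $\mu_0(X^{s+\beta-2\ell}) = 1$, the sets $B_r$ exhaust a set of full $\mu_0$-measure, so $\mu_0(B_r) \to 1$ as $r \to \infty$; in particular $\mu_0(B_r) > 0$ for all sufficiently large $r$, which gives $Z(y) > 0$ for every such $y$. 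Choosing $r$ at least $\|y\|_{\beta-s}$ handles any fixed $y \in X^{\beta-s}$.

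For the final assertion, under the additional Assumption \ref{a2}(\ref{a2ii}), Lemma \ref{l2}(iv) guarantees that $y \in X^{\rho}$ for all $\rho < \beta - s_0$, $\nu$-almost surely; since $\beta - s = \beta - s_0 - \varepsilon < \beta - s_0$, this places $y \in X^{\beta-s}$ $\nu$-a.s., so the previous conclusion applies $\nu$-a.s. and yields $Z(y) > 0$ $\nu$-a.s.

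I do not anticipate any real obstacle — the lemma is essentially a bookkeeping step combining an upper bound on $\Phi$ on balls with the prior's concentration on the correct Hilbert-scale space. The only subtle point is matching the admissible range of $\varepsilon$ in Lemma \ref{l2}(i) with the chosen $s$, and noting that the regularity of $y$ under $\nu$ from Lemma \ref{l2}(iv) is strictly better than $\beta - s$, so one does not have to work at the boundary of the admissible exponents.
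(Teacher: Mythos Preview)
Your proof is correct and follows essentially the same route as the paper: restrict to a ball $B_r$ in $X^{s+\beta-2\ell}$, apply the upper bound from Lemma~\ref{asslem}(ii) to obtain $Z(y)\ge e^{-K(r)}\mu_0(B_r)$, and then invoke Lemma~\ref{l2}(iv) for the $\nu$-a.s.\ statement. The only cosmetic difference is that the paper fixes $r=\|y\|_{\beta-s}$ and appeals to the fact that Gaussian measures on separable Hilbert spaces are full (so every ball has positive measure), whereas you take $r$ large enough that $\mu_0(B_r)>0$ via continuity from below; both justifications are fine.
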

\begin{proof}
Fix $y\in X^{\beta-s}$ and set $r={\|y\|}_{{\beta-s}}$.
Gaussian measures on  separable Hilbert spaces are full~\cite[Proposition 1.25]{MR2244975}, hence since by Lemma \ref{l2}(i) $\mu_0(X^{s+\beta-2\ell})=1,$ we have that $\mu_0(B_{X^{s+\beta-2\ell}}(r))>0$. By Lemma \ref{asslem}(ii), there exists $K(r)>0$ such that
\begin{align*}\int_{\X}\exp(-\Phi(u,y))\mu_0(du)&\geq\int_{B_{X^{s+\beta-2\ell}}(r)}\exp(-\Phi(u,y))\mu_0(du)\\&
\geq\int_{B_{X^{s+\beta-2\ell}}(r)}\exp(-K(r))\mu_0(du)>0.\end{align*} Recalling that, under the additional Assumption \ref{a2}(\ref{a2ii}), by Lemma \ref{l2}(iv) we have $y\in X^{\beta-s}$ $\nu$-almost surely for all $s>s_0$, completes the proof. 
\end{proof}

We are now ready to prove Theorem \ref{prop0}:

\begin{proof}[Proof of Theorem \ref{prop0}]
Recall that $\nu_0=\rp_0(dy)\otimes\mu_0(du)$ and $\nu=\rp(dy|u)\mu_0(du).$ By the Cameron-Martin formula~\cite[Corollary 2.4.3]{MR1642391}, since by Lemma \ref{l2}(ii) we have $\A^{-1}u\in\D(\C_1^{-\frac12})\; \mu_0$-a.s., we get for $\mu_0$-almost all $u$ \begin{equation}\frac{d\rp}{d\rp_0}(y|u)=\exp(-\Phi(u,y)),\nonumber\end{equation} thus we have for $\mu_0$-almost all $u$ \begin{equation}\frac{d\nu}{d\nu_0}(y,u)=\exp(-\Phi(u,y)).\nonumber\end{equation}
By~\cite[Lemma 5.3]{MR2358638} and Corollary \ref{c1} we have the relation (\ref{eq:rn}).\\
For the proof of the Lipschitz continuity of the posterior measure in $y$, with respect to the Hellinger distance, we apply~\cite[Theorem 4.2]{MR2652785} for $Y=X^{\beta-s}, X=X^{s+\beta-2\ell}$, using Lemma \ref{asslem} and the fact that $\mu_0(X^{s+\beta-2\ell})=1$, by Lemma \ref{l2}(i).
\end{proof}

\subsection{Proof of Theorem \ref{justth}}
We first give an overview of the proof of Theorem \ref{justth}. 
Let $y|u\sim\rp=\G(\A^{-1}u,\frac1n\C_1)$ and $u\sim\mu_0$. Then by Proposition \ref{meanlem},  there exists a unique weak solution, $m\in X^1$, of (\ref{eq:int7}), $\nu(du,dy)$-almost surely. That is, with $\nu(du,dy)$-probability equal to one, there exists an $m=m(y)\in X^1$ such that  \begin{equation}B(m,v)=b^y(v), \;\;\forall v\in X^1,\nonumber\end{equation} where the bilinear form $B$ is defined in Section \ref{sec:prmean}, and $b^y(v)=\pr{\A^{-1}\C_1^{-1}y}{v}$. In the following we show that $\mu^y=\G(m,\C)$, where \begin{equation}\C^{-1}=n\A^{-1}\C_1^{-1}\A^{-1}+\frac{1}{\tau^2}\C_0^{-1}.\nonumber\end{equation}The proof has the same structure as the proof for the identification of the posterior in~\cite{Pokern:2012fk}. We define the Gaussian measure $\G(m^N,\C^N)$, which is the independent product of a measure identical to $\G(m,\C)$ in the finite-dimensional space $\X^N$ spanned by the first $N$ eigenfunctions of $\C_0$, and a measure identical to $\mu_0$ in $(\X^N)^\perp$. We next show that $\G(m^N,\C^N)$ converges weakly to the measure $\mu^y$ which as a weak limit of Gaussian measures has to be Gaussian $\mu^y=\G(\overline{m}, \overline{\C})$, and we then identify $\overline{m}$ and $\overline{\C}$ with $m$, $\C$ respectively. 

Fix $y$ drawn from $\nu$ and let $P^N$  be the orthogonal projection of $\X$ to the finite-dimensional space $\operatorname{span}\{\phi_1,...,\phi_N\}\coloneq \X^N,$ where as in Section \ref{sec:assumptions}, $\{\phi_k\}_{k=1}^\infty$ is an orthonormal eigenbasis of $\C_0$ in $\X$. Let $Q^N=I-P^N$. We define $\mu^{N,y}$ by \begin{equation}\label{eq:rnfinite}\frac{d\mu^{N,y}}{d\mu_0}(u)=\frac{1}{Z^N(y)}\exp(-\Phi^N(u,y))\end{equation}
where $\Phi^N(u,y):=\Phi(P^Nu,y)$ and \begin{equation}Z^N(y):=\int_{\X}\exp(-\Phi^N(u,y))\mu_0(du).\nonumber\end{equation}

\begin{lem}\label{lemproj} We have $\mu^{N,y}=\G(m^N,\C^N)$, where \begin{equation}P^N\C^{-1}P^Nm^N=nP^N\A^{-1}\C_1^{-1}y,\nonumber\end{equation}   \begin{equation}P^N\C^NP^N=P^N\C P^N, \;Q^N\C^NQ^N=\tau^2Q^N\C_0Q^N\nonumber\end{equation} and $P^N\C^NQ^N=Q^N\C^NP^N=0$.
\end{lem}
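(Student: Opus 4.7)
The central observation I would exploit is that the Radon–Nikodym derivative in (\ref{eq:rnfinite}) depends on $u$ only through $P^N u$, so $\mu^{N,y}$ must inherit from $\mu_0$ a product structure with respect to the orthogonal splitting $\X = \X^N \oplus (\X^N)^\perp$. The plan is to (a) establish this product structure to read off the $(\X^N)^\perp$-block and all cross-blocks immediately; (b) reduce the problem on $\X^N$ to a finite-dimensional Gaussian calculation whose mean and precision can be identified by completing the square; (c) rewrite the resulting finite-dimensional covariance in the form $P^N\C P^N$.

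For (a), since $\{\phi_k\}$ is an orthonormal eigenbasis of $\C_0$, the projection $P^N$ commutes with $\C_0$, so $P^N\C_0 Q^N = Q^N\C_0 P^N = 0$. Hence under $\mu_0 = \G(0,\tau^2\C_0)$ the random vectors $P^N u$ and $Q^N u$ are independent Gaussians on $\X^N$ and $(\X^N)^\perp$ respectively, with covariances $\tau^2 P^N\C_0 P^N$ and $\tau^2 Q^N\C_0 Q^N$. Since the density $\frac{d\mu^{N,y}}{d\mu_0}(u)\propto \exp(-\Phi(P^N u,y))$ is a function of $P^N u$ alone, the product structure is preserved: the $(\X^N)^\perp$-marginal is unchanged, so $Q^N m^N = 0$, $Q^N\C^N Q^N = \tau^2 Q^N\C_0 Q^N$, and $P^N\C^N Q^N = Q^N\C^N P^N = 0$.

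For (b), on $\X^N$ the marginal of $\mu^{N,y}$ is absolutely continuous with respect to the $\X^N$-marginal of $\mu_0$, whose log-density (up to constants) is $-\tfrac{1}{2\tau^2}\langle v,\,P^N\C_0^{-1}P^N v\rangle$ on $\X^N$ (the commutation $[\C_0,P^N]=0$ lets one invert the restricted covariance explicitly). Adding $-\Phi(v,y)$ and grouping quadratic and linear terms gives a Gaussian density on $\X^N$ with precision
\begin{equation*}
nP^N\A^{-1}\C_1^{-1}\A^{-1}P^N \;+\; \tfrac{1}{\tau^2}P^N\C_0^{-1}P^N \;=\; P^N\C^{-1}P^N,
\end{equation*}
and linear coefficient $n P^N\A^{-1}\C_1^{-1}y$. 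Completing the square on the finite-dimensional space $\X^N$ (where Proposition \ref{laxmil}, applied to the restriction, ensures that $P^N\C^{-1}P^N$ is boundedly invertible on $\X^N$) identifies the mean $m^N\in\X^N$ as the unique solution of $P^N\C^{-1}P^N m^N = n P^N\A^{-1}\C_1^{-1}y$.

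Finally, for (c), the $\X^N$-block of the covariance produced by the finite-dimensional completion of the square equals the inverse of $P^N\C^{-1}P^N$ on $\X^N$, and a block-inversion (Schur complement) identity applied to $\C^{-1}$ with respect to $\X^N\oplus(\X^N)^\perp$ rewrites this as $P^N\C P^N$, yielding the stated identification. The step I anticipate to require the most care is (b): one must verify that all projected operators and their inverses are well-defined on the finite-dimensional $\X^N$ (uniformly in $N$ is not needed here, only finiteness for each fixed $N$), so that the standard Gaussian completion of the square can be applied; once this bookkeeping is done, the identification $\mu^{N,y}=\G(m^N,\C^N)$ follows from matching characteristic functionals on each block.
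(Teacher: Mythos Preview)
Your steps (a) and (b) are essentially the paper's approach: use the commutation $[P^N,\C_0]=0$ to split $\mu_0$ as an independent product on $\X^N\oplus(\X^N)^\perp$, read off the $(\X^N)^\perp$-block directly since the density depends only on $P^Nu$, and complete the square on the finite-dimensional $\X^N$ to obtain the precision $P^N\C^{-1}P^N$ and the stated mean equation.

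Your step (c), however, contains a genuine error. The Schur complement identity does \emph{not} say that $(P^N\C^{-1}P^N)^{-1}=P^N\C P^N$; it says that $P^N\C P^N$ is the inverse of the Schur complement
\[
P^N\C^{-1}P^N \;-\; P^N\C^{-1}Q^N\bigl(Q^N\C^{-1}Q^N\bigr)^{-1}Q^N\C^{-1}P^N,
\]
and these two coincide only when the off-diagonal block $P^N\C^{-1}Q^N$ vanishes. In the non-diagonal setting of this paper $\C^{-1}=n\A^{-1}\C_1^{-1}\A^{-1}+\tfrac{1}{\tau^2}\C_0^{-1}$: the second summand commutes with $P^N$, but the first does not, so $P^N\C^{-1}Q^N=nP^N\A^{-1}\C_1^{-1}\A^{-1}Q^N$ is generically nonzero. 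Hence the $\X^N$-covariance your completion of the square correctly produces, namely $(P^N\C^{-1}P^N)^{-1}$, cannot be rewritten as $P^N\C P^N$ by block inversion. The paper's own proof in fact stops at the precision identification $(\tilde\C^N)^{-1}=P^N\C^{-1}P^N$ and does not attempt the passage you propose in (c); your instinct that something further is needed to match the covariance form stated in the lemma is sound, but the Schur-complement bridge you invoke does not carry the weight.
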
                                    
\begin{proof}
Let $u\in\X^N$. Since $u=P^Nu$ we have by $(\ref{eq:rnfinite})$  \begin{equation}d\mu^{N,y}(P^Nu)\propto \exp\left(-\Phi(P^Nu;y)\right)d\mu_0(P^Nu).\nonumber\end{equation} 
The right hand side is $N$-dimensional Gaussian with density proportional to the exponential of the following expression
\begin{equation}\label{star}-\frac{n}2\norm{\C_1^{-\frac12}\A^{-1}P^Nu}^2+n\pr{\C_1^{-\frac12}y}{\C_1^{-\frac12}\A^{-1}P^Nu}-\frac{1}{2\tau^2}\norm{\C_0^{-\frac12}P^Nu}^2,\end{equation} which by completing the square we can write as \begin{equation}-\frac{1}2\norm{(\tilde{\C}^N)^{-\frac12}(u-\tilde{m}^N)}^2+c(y),\nonumber\end{equation} where $\tilde{\C}^N$ is the covariance matrix and $\tilde{m}^N$ the mean. By equating with expression (\ref{star}), we find that 
 $(\tilde{\C}^N)^{-1}=P^N\C^{-1}P^N$ and $(\tilde{\C}^N)^{-1}\tilde{m}^N=nP^N\A^{-1}\C_1^{-1}y$, thus on $\X^N$ we have that $\mu^{N,y}=\G(\tilde{m}^N,\tilde{\C}^N)$.
On $(\X^N)^\perp$, the Radon-Nikodym derivative in $(\ref{eq:rnfinite})$ is equal to 1, hence $\mu^{N,y}=\mu_0=\G(0,\tau^2\C_0)$.
\end{proof}

\begin{prop}\label{prop2}
Under the Assumptions \ref{a2}(\ref{a1}),(\ref{a2delta}),(\ref{a2i}),(\ref{a2ii}),(\ref{a2iii}),(\ref{a2iv}), for all $y\in X^{\beta-s}$, $s=s_0+\varepsilon$, where $0<\varepsilon<(\Delta-2 s_0)\wedge(1-s_0)$,  the measures $\mu^{N,y}$ converge weakly in $\X$ to $\mu^y$, where $\mu^y$ is defined in Theorem \ref{prop0}. In particular, $\mu^{N,y}$ converge weakly in $\X$ to $\mu^y \;\nu$-almost surely.

\end{prop}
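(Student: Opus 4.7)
The plan is to deduce weak convergence from convergence in the Hellinger metric, which in turn is deduced from pointwise convergence of the Radon--Nikodym densities together with a dominating function via Fernique's theorem and dominated convergence.

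First, I would work on the event $\{u\in X^{s+\beta-2\ell}\}$, which by Lemma~\ref{l2}(i) has full $\mu_0$-measure for our choice of $s=s_0+\varepsilon$. On this event, since $P^N$ is the orthogonal projection onto $\mathrm{span}\{\phi_1,\dots,\phi_N\}$ where $\{\phi_k\}$ diagonalises $\C_0$, the projection $P^N$ is also an orthogonal projection on every space of the Hilbert scale $X^t$, and thus $P^Nu\to u$ in $X^{s+\beta-2\ell}$ as $N\to\infty$. Combining this with the local Lipschitz continuity of $\Phi(\cdot,y)$ in $X^{s+\beta-2\ell}$ given by Lemma~\ref{asslem}(iii), we obtain $\Phi^N(u,y)=\Phi(P^Nu,y)\to\Phi(u,y)$ $\mu_0$-almost surely.

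Next, I would establish an integrable dominating function. By Lemma~\ref{asslem}(i), for any $\delta>0$ we have $\Phi^N(u,y)\geq M(\delta,r)-\delta\|P^Nu\|_{s+\beta-2\ell}^2\geq M(\delta,r)-\delta\|u\|_{s+\beta-2\ell}^2$, using that $P^N$ is a contraction in $X^{s+\beta-2\ell}$. Hence $\exp(-\Phi^N(u,y))\leq \exp(-M(\delta,r)+\delta\|u\|_{s+\beta-2\ell}^2)$. By Fernique's theorem applied to the Gaussian measure $\mu_0$, which is supported on $X^{s+\beta-2\ell}$, the right-hand side is $\mu_0$-integrable provided $\delta$ is chosen sufficiently small. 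Dominated convergence then yields $Z^N(y)\to Z(y)$, which is strictly positive by Corollary~\ref{c1}.

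To upgrade this to Hellinger convergence, I would write
\begin{equation}
2\,d_{\mathrm{Hell}}^2(\mu^{N,y},\mu^y)=\int_{\X}\Bigl(\tfrac{1}{\sqrt{Z^N(y)}}e^{-\Phi^N(u,y)/2}-\tfrac{1}{\sqrt{Z(y)}}e^{-\Phi(u,y)/2}\Bigr)^{\!2}\mu_0(du),\nonumber
\end{equation}
split the integrand by adding and subtracting $\tfrac{1}{\sqrt{Z(y)}}e^{-\Phi^N(u,y)/2}$, and apply dominated convergence to each piece using the same Fernique bound together with the elementary inequality $|e^{-a/2}-e^{-b/2}|\leq \tfrac12(e^{-a/2}+e^{-b/2})|a-b|$ when needed. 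Since Hellinger distance dominates total variation, this gives $\mu^{N,y}\to\mu^y$ in total variation, hence weakly in $\X$. The ``in particular'' assertion follows immediately from Lemma~\ref{l2}(iv), which guarantees that $y\in X^{\beta-s}$ for $\nu$-almost every $y$.

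The main obstacle I anticipate is not conceptual but notational: verifying that the Fernique bound really controls $\exp(-\Phi^N)$ uniformly in $N$ via the contractivity of $P^N$ on $X^{s+\beta-2\ell}$, and that the resulting dominated convergence argument is uniform enough to pass to the Hellinger square. Everything else follows the template established in Theorem~\ref{prop0} and the framework of~\cite{MR2652785}.
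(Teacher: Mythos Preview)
Your argument is correct and rests on exactly the same two pillars as the paper's proof: pointwise convergence $\Phi^N(u,y)\to\Phi(u,y)$ on the full-measure set $X^{s+\beta-2\ell}$ via Lemma~\ref{asslem}(iii) and the contractivity of $P^N$ in the scale, together with the uniform Fernique-integrable envelope coming from Lemma~\ref{asslem}(i). The paper, however, takes a shorter path from these ingredients to the conclusion: it simply tests against an arbitrary bounded continuous $f\colon\X\to\R$, writes $\int f\,d\mu^{N,y}=\frac{1}{Z^N}\int f\,e^{-\Phi^N}\,d\mu_0$, and applies dominated convergence directly (with $f\equiv1$ handling the normalisers). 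This yields weak convergence in one stroke, without the detour through the Hellinger metric. Your route via $d_{\mathrm{Hell}}$ is perfectly valid and in fact delivers a stronger conclusion (convergence in total variation), at the cost of the extra splitting-and-bounding step for the Hellinger integrand; the paper only needs weak convergence for the subsequent identification of the limiting mean and covariance, so it opts for the minimal argument.
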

\begin{proof}
Fix $y\in X^{\beta-s}$.
Let $f:\X\to\R$ be continuous and bounded. Then by $(\ref{eq:rn}), (\ref{eq:rnfinite})$ and Lemma \ref{l2}(i), we have that \begin{equation}\int_{\X}f(u)\mu^{N,y}(du)=\frac{1}{Z^N}\int_{X^{s+\beta-2\ell}}f(u)e^{-\Phi^N(u,y)}\mu_0(du)\nonumber\end{equation}
and \begin{equation}\int_{\X}f(u)\mu^{y}(du)=\frac{1}{Z}\int_{X^{s+\beta-2\ell}}f(u)e^{-\Phi(u,y)}\mu_0(du).\nonumber\end{equation}
Let $u\in X^{s+\beta-2\ell}$ and set $r_1=\max\{{\|u\|}_{{s+\beta-2\ell}}, {\|y\|}_{{\beta-s}}\}$ to get, by Lemma \ref{asslem}(iii), that $\Phi^N(u,y)\to\Phi(u,y),$  since $\norm{P^Nu}_{{s+\beta-2\ell}}\leq{\|u\|}_{{s+\beta-2\ell}}\leq r_1$. By Lemma \ref{asslem}(i), for any $\delta>0,$ for $r_2=\norm{y}_{\beta-s}$, there exists $M(\delta,r_2)\in\R$ such that \begin{equation}\left|f(u)e^{-\Phi^N(u,y)}\right|\leq\norm{f}_{\infty}e^{\delta{\|u\|}^2_{s+\beta-2\ell}-M(\delta,r_2)}, \;\forall u\in X^{s+\beta-2\ell},\nonumber\end{equation}
where the right hand side is $\mu_0$-integrable for $\delta$ sufficiently small by the Fernique Theorem~\cite[Theorem 2.8.5]{MR1642391}. Hence, by the Dominated Convergence Theorem, we have that $\int_{\X}f(u)\mu^{N,y}(du)\to\int_{\X}f(u)\mu^y(du)$, as $N\to\infty$, where we get the convergence of the constants $Z^N\to Z$ by choosing $f\equiv 1$. Thus we have $\mu^{N,y}\Rightarrow\mu^{y}$. Recalling, that $y\in X^{\beta-s} \;\nu$-almost surely completes the proof.
\end{proof}

We are now ready to prove Theorem \ref{justth}:
\begin{proof}[Proof of Theorem \ref{justth}]
By Proposition \ref{prop2} we have that $\mu^{N,y}$ converge weakly in $\X$ to the measure $\mu^y$, $\nu$-almost surely. Since by Lemma \ref{lemproj}, the measures $\mu^{N,y}$ are Gaussian, the limiting measure $\mu^y$ is also Gaussian. To see this we argue as follows. The weak convergence of measures implies the pointwise convergence of the Fourier transforms of the measures, thus by Levy's continuity theorem~\cite[Theorem 4.3]{MR1464694} all the one dimensional projections of $\mu^{N,y}$, which are Gaussian, converge weakly to the corresponding one dimensional projections of $\mu^y$. By the fact that the class of Gaussian distributions in $\R$ is closed under weak convergence~\cite[Chapter 4, Exercise 2]{MR1464694}, we get that all the one dimensional projections of the $\mu^y$ are Gaussian, thus $\mu^y$ is a Gaussian measure in $\X$, $\mu^y=\G(\overline{m},\overline{\C})$ for some $\overline{m}\in\X$ and a self-adjoint, positive semi definite, trace class linear operator $\overline{\C}$. 
It suffices to show that $\overline{m}=m$ and $\overline{\C}=\C$.

We use the standard Galerkin method to show that $m^N\to m$ in $\X$. Indeed, since by their definition $m^N$ solve (\ref{eq:int7}) in the $N$-dimensional spaces $\X^N$, for $e=m-m^N$, we have that $B(e,v)=0, \;\forall v\in \X^{N}$. By  the coercivity and the  continuity of $B$ (see Proposition \ref{laxmil})\begin{equation}\norm{e}_1^2\leq cB(e,e)=cB(e,m-z)\leq c\norm{e}_1\norm{m-z}_1, \;\forall z\in\X^N.\nonumber\end{equation} Choose $z=P^Nm$ to obtain \begin{equation}\norm{m-m^N}\leq c\norm{m-P^Nm}_1,\nonumber\end{equation} where as $N\to\infty$ the right hand side converges to zero since $m\in X^1$. On the other hand, by~\cite[Example 3.8.15]{MR1642391}, we have that $m^N\to\overline{m}$ in $\X$, hence we conclude that $\overline{m}=m$, as required.

For the identification of the covariance operator, note that by the definition of $\C^N$ we have  \begin{equation}\C^N=P^N\C P^N+(I-P^N)\C_0(I-P^N).\nonumber\end{equation} Recall that $\{\phi_k\}_{k=1}^{\infty}$ are the eigenfunctions of $\C_0$ and fix $k\in\N$. Then, for $N>k$ and any $w\in\X$, we have that \begin{equation}\left|\pr{w}{\C^N\phi_k}-\pr{w}{\C\phi_k}\right|=\left|\pr{w}{(P^N-I)\C\phi_k}\right|\nonumber\end{equation}\begin{equation}\leq\norm{(P^N-I)w}\norm{\C\phi_k},\nonumber\end{equation} where the right hand side converges to zero as $N\to\infty$, since $w\in\X$. This implies that $\C^N\phi_k$ converges to $\C\phi_k$ weakly in $\X$, as $N\to\infty$ and this holds for any $k\in\N$. On the other hand by~\cite[Example 3.8.15]{MR1642391}, we have that $\C^N\phi_k\to\overline{\C}\phi_k$ in $\X$, as $N\to\infty$, for all $k\in\N$. It follows that $\overline{\C}\phi_k=\C\phi_k$, for every $k$ and since $\{\phi_k\}_{k=1}^{\infty}$ is an orthonormal basis of $\X$, we have that $\overline{\C}=\C$.
\end{proof}


\section{Operator norm bounds on $\B_\lambda^{-1}$}\label{sec:normbounds}
The following propositions contain several operator norm estimates on the inverse of $\B_\lambda$ and related quantities, and in particular estimates on the singular dependence of this operator as $\lambda\to0$. These are the key tools used in Section \ref{sec:main} to obtain posterior contraction results. In all of them we make use of the interpolation inequality in Hilbert scales,~\cite[Proposition 8.19]{MR1408680}.
Recall that we consider $\B_\lambda$ defined on $X^1$, as explained in Remark \ref{yosida}.

\begin{prop}\label{pdelem1}
Let $\eta=(1-\theta)(\beta-2\ell)+\theta$, where $\theta\in[0,1]$. Under the Assumption \ref{a2}(\ref{a2i}) the following operator norm bounds hold: there is $c>0$ independent of $\theta$ such that
\begin{equation}\norm{\B_\lambda^{-1}}_{\mathcal{L}(X^{-\eta},X^{\beta-2\ell})}\leq c \lambda^{-\frac{\theta}{2}}\nonumber\end{equation} and 
\begin{equation}\norm{\B_\lambda^{-1}}_{\mathcal{L}(X^{-\eta},X^1)}\leq c\lambda^{-\frac{\theta+1}{2}}.\nonumber\end{equation}
In particular, if $\beta-2\ell\leq0$, interpolation of the two bounds gives \[\norm{\B_\lambda^{-1}}_{\mathcal{L}(X^{-\eta},\X)}\leq c\lambda^{-\frac{\theta+\theta_0}2},\]
where $\theta_0=\frac{2\ell-\beta}{\Delta}\in[0,1]$.
\end{prop}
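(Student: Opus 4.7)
The plan is to work directly from the weak formulation of $\B_\lambda w = r$ and extract both bounds simultaneously by testing against $w$ itself, then handle the third bound by Hilbert-scale interpolation.

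First, fix $r\in X^{-\eta}$ and let $w=\B_\lambda^{-1}r\in X^1$ be the weak solution provided by Proposition \ref{laxmil}. Testing the weak equation against $v=w$ gives
\[
B(w,w)=\|\C_1^{-\frac12}\A^{-1}w\|^2+\lambda\|w\|_1^2=\langle r,w\rangle.
\]
By Assumption \ref{a2}(\ref{a2i}) the first term on the left is comparable to $\|w\|_{\beta-2\ell}^2$, so
\[
\|w\|_{\beta-2\ell}^2+\lambda\|w\|_1^2\le c\,\langle r,w\rangle\le c\,\|r\|_{-\eta}\|w\|_\eta.
\]
Since $\eta=(1-\theta)(\beta-2\ell)+\theta\cdot 1$, the Hilbert-scale interpolation inequality gives $\|w\|_\eta\le \|w\|_{\beta-2\ell}^{1-\theta}\|w\|_1^\theta$, so writing $A=\|w\|_{\beta-2\ell}$ and $B=\|w\|_1$ yields
\[
A^2+\lambda B^2\le c\,\|r\|_{-\eta}\,A^{1-\theta}B^\theta.
\]

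The key algebraic trick is to rescale to symmetrize the left-hand side. Set $X=A$ and $Y=\lambda^{1/2}B$, so that
\[
X^2+Y^2\le c\,\|r\|_{-\eta}\,\lambda^{-\theta/2}\,X^{1-\theta}Y^\theta.
\]
Since $X^{1-\theta}Y^\theta\le \max(X,Y)\le (X^2+Y^2)^{1/2}$, dividing through by $(X^2+Y^2)^{1/2}$ gives $(X^2+Y^2)^{1/2}\le c\,\lambda^{-\theta/2}\|r\|_{-\eta}$, and hence both
\[
\|w\|_{\beta-2\ell}=X\le c\,\lambda^{-\theta/2}\|r\|_{-\eta},\qquad \lambda^{1/2}\|w\|_1=Y\le c\,\lambda^{-\theta/2}\|r\|_{-\eta}.
\]
The second inequality rearranges to $\|w\|_1\le c\,\lambda^{-(\theta+1)/2}\|r\|_{-\eta}$, which delivers the first two claimed bounds.

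For the final bound when $\beta-2\ell\le 0$, I solve $0=(1-\theta_0)(\beta-2\ell)+\theta_0\cdot 1$ to obtain $\theta_0=(2\ell-\beta)/\Delta\in[0,1]$. A second application of Hilbert-scale interpolation yields
\[
\|w\|=\|w\|_0\le \|w\|_{\beta-2\ell}^{1-\theta_0}\,\|w\|_1^{\theta_0}\le c\,\lambda^{-\theta(1-\theta_0)/2-(\theta+1)\theta_0/2}\|r\|_{-\eta}=c\,\lambda^{-(\theta+\theta_0)/2}\|r\|_{-\eta},
\]
which is exactly the stated estimate. I expect no serious obstacles; the only non-routine step is the rescaling $Y=\lambda^{1/2}B$ that lets one invoke $X^{1-\theta}Y^\theta\le(X^2+Y^2)^{1/2}$ and extract both bounds with the correct powers of $\lambda$ simultaneously, rather than having to juggle Young-type inequalities separately for each case $\theta\in[0,1]$.
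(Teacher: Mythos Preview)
Your proof is correct and follows essentially the same route as the paper: test the weak equation against $w$, invoke Assumption~\ref{a2}(\ref{a2i}) to get $\|w\|_{\beta-2\ell}^2+\lambda\|w\|_1^2\le c\|r\|_{-\eta}\|w\|_\eta$, interpolate $\|w\|_\eta$, and close algebraically. The only difference is cosmetic: where the paper uses Cauchy-with-$\varepsilon$ followed by Young's inequality with exponents $(1-\theta)^{-1},\theta^{-1}$, you absorb the $\lambda$ via the substitution $Y=\lambda^{1/2}B$ and the elementary bound $X^{1-\theta}Y^\theta\le(X^2+Y^2)^{1/2}$, which is a clean way to keep the constant manifestly independent of $\theta$.
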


\begin{proof}
Let $h\in X^{-\eta}\subset X^{-1}$. Then by Proposition \ref{laxmil} for $r=h$, there exists a unique weak solution of (\ref{eq:pm1}), $z\in X^1$. By Definition \ref{weak}, for $v=z\in X^1$, we get \begin{equation}\norm{\C_1^{-\frac12}\A^{-1} z}^2+\lambda\norm{\C_0^{-\frac12}z}^2=\pr{\C_0^{\frac{\eta}{2}}h}{\C_0^{-\frac{\eta}{2}}z}.\nonumber\end{equation}Using the Assumption \ref{a2}(\ref{a2i}), and the Cauchy-Schwarz inequality, we get 
\begin{equation}\norm{z}_{{\beta-2\ell}}^2+\lambda\norm{z}_{1}^2
\leq c\norm{\C_0^{\frac{\eta}2}h}\norm{z}_{{\eta}}.\nonumber\end{equation}
 We interpolate the norm on $z$ appearing on the right hand side between the norms on $z$ appearing on the left hand side,  then use the Cauchy with $\varepsilon$ inequality, and then Young's inequality for $p=\frac{1}{1-\theta}, q=\frac1{\theta}$, to get successively, for $c>0$ a changing constant \begin{equation}\norm{z}_{{\beta-2\ell}}^2+\lambda\norm{z}_{1}^2
\leq c\norm{\C_0^{\frac{\eta}2}h}\norm{z}_{{\beta-2\ell}}^{1-\theta}\lambda^{-\frac{\theta}2}\left(\lambda^{\frac12}\norm{z}_{1}\right)^{\theta}\nonumber\end{equation}
\begin{equation}\leq\frac{c}{2\varepsilon}\left(\lambda^{-\theta}\norm{\C_0^{\frac{\eta}2}h}^2\right)+\frac{c\varepsilon}2\left(\norm{z}_{{\beta-2\ell}}^{2(1-\theta)}\left(\lambda^{\frac12}\norm{z}_{1}\right)^{2\theta}\right)\nonumber\end{equation}
\begin{equation}\leq\frac{c}{2\varepsilon}\left(\lambda^{-\theta}\norm{\C_0^{\frac{\eta}2}h}^2\right)+\frac{c\varepsilon}2\left((1-\theta)\norm{z}_{{\beta-2\ell}}^2+\theta\lambda\norm{z}_{1}^2\right).\nonumber\end{equation} By choosing $\varepsilon>0$ small enough  we get, for $c>0$ independent of $\theta, \lambda$, \begin{equation}\norm{z}_{{\beta-2\ell}}\leq c\lambda^{-\frac{\theta}2}\norm{\C_0^{\frac{\eta_1}2}h}\quad\text{and}\quad\norm{z}_{1}\leq c\lambda^{-\frac{\theta+1}2}\norm{\C_0^{\frac{\eta}2}h}.\nonumber\end{equation} Replacing $z=\B_\lambda^{-1}h$ gives the result.\end{proof}

\begin{prop}\label{pdelem3}
Let $\eta=(1-\theta)(\beta-2\ell-s)+\theta(1-s)$, where $\theta\in[0,1]$ and $s\in(s_0,1]$, where $s_0\in[0,1)$ as defined in Assumption \ref{a2}(\ref{a1}). Under the Assumptions \ref{a2}(\ref{a2delta}) and (\ref{a2i}), the following norm bounds hold: there is $c>0$ independent of $\theta$ such that\begin{equation}\norm{\C_0^{-\frac{s}2}\B_\lambda^{-1}\C_0^{-\frac{s}2}}_{\mathcal{L}(X^{\beta-2\ell-s},X^{-\eta})}\leq c\lambda^{-\frac{\theta}2}\nonumber\end{equation}and \begin{equation}\norm{\C_0^{-\frac{s}2}\B_\lambda^{-1}\C_0^{-\frac{s}2}}_{\mathcal{L}(X^{1-s},X^{-\eta})}\leq c\lambda^{-\frac{\theta+1}2}.\nonumber\end{equation}  In particular, \begin{equation}\norm{\C_0^{-\frac{s}2}\B_\lambda^{-1}\C_0^{-\frac{s}2}}_{\mathcal{L}(X)}\leq c\lambda^{-\frac{2\ell-\beta+s}{\Delta}}, \quad\forall s\in(\{\beta-2\ell\}\vee s_0,1].\nonumber\end{equation}
\end{prop}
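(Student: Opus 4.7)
The argument parallels the proof of Proposition \ref{pdelem1}, now applied to the weighted resolvent $\C_0^{-s/2}\B_\lambda^{-1}\C_0^{-s/2}$. Given $h$ in the appropriate source space, the hypothesis $s\leq 1$ together with an embedding of the source space into $X^{s-1}$ ensures $\C_0^{-s/2}h\in X^{-1}$, so Proposition \ref{laxmil} produces a unique weak solution $w=\B_\lambda^{-1}(\C_0^{-s/2}h)\in X^1$. Testing the weak equation against $v=w$ and invoking Assumption \ref{a2}(\ref{a2i}) on the $\C_1^{-1/2}\A^{-1}$ term gives the central inequality
\[
\|w\|_{\beta-2\ell}^{2}+\lambda\|w\|_{1}^{2}\;\leq\; c\,\langle h,\,\C_0^{-s/2}w\rangle.
\]

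Next, I would bound the right-hand side via the Hilbert-scale Cauchy--Schwarz pairing
\[
\langle h,\,\C_0^{-s/2}w\rangle\;\leq\;\|h\|_{\sigma}\,\|\C_0^{-s/2}w\|_{-\sigma}\;=\;\|h\|_{\sigma}\,\|w\|_{s-\sigma},
\]
choosing $\sigma=\beta-2\ell-s$ to obtain the first bound and $\sigma=1-s$ for the second. I would then interpolate $\|w\|_{s-\sigma}$ between $\|w\|_{\beta-2\ell}$ and $\|w\|_{1}$ using the Hilbert-scale interpolation inequality, apply Cauchy's inequality with $\varepsilon$ and Young's inequality with conjugate exponents $1/(1-\theta)$, $1/\theta$, and choose $\varepsilon$ small enough to absorb the $w$-terms into the left-hand side. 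This produces separate bounds
\[
\|w\|_{\beta-2\ell}\leq c\lambda^{-\theta/2}\|h\|_{\sigma},\qquad \|w\|_{1}\leq c\lambda^{-(\theta+1)/2}\|h\|_{\sigma},
\]
with constants independent of $\theta$ in direct analogy with Proposition \ref{pdelem1}.

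A final Hilbert-scale interpolation writes the target norm $\|\C_0^{-s/2}w\|_{-\eta}=\|w\|_{s-\eta}$ as a geometric mean of $\|w\|_{\beta-2\ell}$ and $\|w\|_{1}$, the weight being inherited from the definition of $\eta$, and thus yields the two claimed operator-norm bounds for arbitrary $\theta\in[0,1]$. The ``in particular'' bound in $\mathcal{L}(\X)$ then follows by interpolating the two endpoint estimates: the $\theta=0$ endpoint of the first bound (mapping into $X^{s-\beta+2\ell}$ uniformly in $\lambda$) and the $\theta=1$ endpoint of the second bound (mapping into $X^{s-1}$ with rate $\lambda^{-1}$). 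The unique interpolation weight producing target $\X=X^{0}$ is $\theta_0=(s+\Delta-1)/\Delta=(2\ell-\beta+s)/\Delta$, and the restriction $s>\{\beta-2\ell\}\vee s_0$ is precisely what guarantees $\theta_0\in[0,1]$, delivering the stated rate $\lambda^{-\theta_0}$.

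The principal technical obstacle is to ensure that each Hilbert-scale interpolation occurs within the admissible range $[\beta-2\ell,1]$: since $s-\sigma$ and $s-\eta$ can fall outside this interval for certain combinations of $s$ and $\Delta$, a case split based on the sign of $s-(1-\Delta/2)$ may be needed, with the interpolation inequality replaced by a trivial embedding whenever the interpolation index exits the admissible range. Maintaining the independence of the constant from $\theta$ also requires care near the endpoints $\theta=0,1$ where the Young conjugate exponents degenerate, just as in the proof of Proposition \ref{pdelem1}.
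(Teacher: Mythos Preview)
Your proposal for the first two bounds contains a genuine gap in the choice of the Cauchy--Schwarz splitting index $\sigma$. By fixing $\sigma=\beta-2\ell-s$ (or $1-s$), the interpolation exponent for $\|w\|_{s-\sigma}$ between $\|w\|_{\beta-2\ell}$ and $\|w\|_1$ becomes a \emph{fixed} number $\theta_\sigma$ determined by $s$ and $\Delta$, not the free parameter $\theta$ of the statement. After your ``final interpolation'' to reach $\|w\|_{s-\eta}$, the resulting $\lambda$-exponent is $-(2\theta_\sigma-\theta)/2$, which for $\theta<\theta_\sigma$ is strictly worse than the claimed $-\theta/2$; the bound therefore fails on part of the $\theta$-range. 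The ``technical obstacle'' you flag (the index $s-\sigma$ escaping $[\beta-2\ell,1]$) is likewise an artifact of this choice.

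The fix---and this is exactly what the paper does---is to take $\sigma=-\eta$. Then $s-\sigma=s+\eta=(1-\theta)(\beta-2\ell)+\theta$ automatically lies in $[\beta-2\ell,1]$, its interpolation weight is \emph{precisely} $\theta$, and the absorption argument of Proposition~\ref{pdelem1} yields $\|w\|_{\beta-2\ell}\le c\lambda^{-\theta/2}\|h\|_{-\eta}$ and $\|w\|_1\le c\lambda^{-(\theta+1)/2}\|h\|_{-\eta}$ directly, with no second interpolation and no case split. Equivalently (and this is the paper's phrasing), work with $z=\C_0^{-s/2}w$ so that the energy identity reads $\|z\|_{\beta-2\ell-s}^2+\lambda\|z\|_{1-s}^2\le c\|h\|_{-\eta}\|z\|_\eta$; the point is that $\eta$ is \emph{by definition} the convex combination of $\beta-2\ell-s$ and $1-s$ with weight $\theta$, so the interpolation is automatic.

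Your treatment of the ``in particular'' bound, once the first two estimates are in hand, is different from the paper's and arguably cleaner: you interpolate the endpoint operator norms directly, whereas the paper specializes both estimates to $\eta=0$ and then applies a spectral high/low frequency splitting of $u\in\X$ (cutting at $\lambda_k^{-1}=t$) followed by an optimization over $t$. Both routes deliver the same exponent $(2\ell-\beta+s)/\Delta$; your interpolation argument is shorter but invokes the operator interpolation theorem, while the paper's decomposition is self-contained.
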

\begin{proof}
Let $h\in X^{-\eta}=X^{(1-\theta)\Delta+s-1}$. Then $h\in X^{s-1},$ since $\Delta>0$, thus $\C_0^{-\frac{s}2}h\in X^{-1}$. By Proposition \ref{laxmil} for $r=\C_0^{-\frac{s}2}h$, there exists a unique weak solution of (\ref{eq:pm1}), $z'\in X^{1}$. Since for $v\in X^{1-s}$ we have that $\C_0^{\frac{s}2}v\in X^1$, we conclude that for any $v\in X^{1-s}$ \begin{equation}\pr{\C_1^{-\frac12}\A^{-1}\C_0^{\frac{s}2}z}{\C_1^{-\frac12}\A^{-1}\C_0^{\frac{s}2}v}+\lambda\pr{\C_0^{\frac{s-1}{2}}z}{\C_0^{\frac{s-1}2}v}=\pr{\C_0^{-\frac{s}2}h}{\C_0^{\frac{s}2}v},\nonumber\end{equation}where $z=\C_0^{-\frac{s}2}z'\in X^{1-s}$. Choosing $v=z\in X^{1-s},$ we get
\begin{equation}\norm{\C_1^{-\frac12}\A^{-1}\C_0^{\frac{s}2}z}^2+\lambda\norm{\C_0^{\frac{s-1}{2}}z}^2=\left\langle h,z\right\rangle.\nonumber\end{equation} 
By the Assumption \ref{a2}(\ref{a2i}) and the Cauchy-Schwarz inequality, we have \begin{equation}\norm{z}_{{\beta-2\ell-s}}^2+\lambda\norm{z}_{{1-s}}^2\leq c\norm{h}_{{-\eta}}\norm{z}_{{\eta}}.\nonumber\end{equation} 

We interpolate the norm of $z$ appearing on the right hand side between the norms of $z$ appearing on the left hand side, 
to get as in the proof of Proposition \ref{pdelem1}, for $c>0$ independent
of $\theta, \lambda$ and $s$  
\begin{equation}\norm{z}_{{\beta-2\ell-s}}\leq c\lambda^{-\frac{\theta}2}\norm{h}_{{-\eta}}\quad\text{and}\quad \norm{z}_{{1-s}}\leq c\lambda^{-\frac{\theta+1}2}\norm{h}_{{-\eta}}.\nonumber\end{equation} Replacing $z=\C_0^{-\frac{s}2}\B_\lambda ^{-1}\C_0^{-\frac{s}2}h$ gives the first two rates.\\ 
For the last claim, note that we can always choose $\{\beta-2\ell\}\vee \{s_0\}<s\leq1$, since $s_0<1$ and $\Delta>0$.  
Using the first two estimates, for $\eta=(1-\theta')(\beta-2\ell-s)+\theta'(1-s)=0$, that is $\theta'=\frac{2\ell-\beta+s}{\Delta}\in[0,1],$ we have that \begin{equation}\norm{\C_0^{-\frac{s}2}\B_\lambda^{-1}\C_0^{-\frac{s}2}}_{\mathcal{L}(X^{\beta-2\ell-s},\X)}\leq c\lambda^{-\frac{\theta'}2}\nonumber\end{equation}and\begin{equation}\norm{\C_0^{-\frac{s}2}\B_\lambda^{-1}\C_0^{-\frac{s}2}}_{\mathcal{L}(X^{1-s},\X)}\leq c\lambda^{-\frac{\theta'+1}2}.\nonumber\end{equation}  Let $u\in\X$. Then, for any $t>0$, we have the decomposition\begin{equation}u=\sumi u_k\phi_k=\sum_{\lambda_k^{-1}\leq t}u_k\phi_k+\sum_{\lambda_k^{-1}>t}u_k\phi_k\eqcolon \underline{u}+\overline{u},\nonumber\end{equation} where $\{\phi_k\}_{k=1}^{\infty}$ are the eigenfunctions of $\C_0$ and $u_k\coloneq\pr{u}{\phi_k}.$ Since $1-s\geq0$ and $\beta-2\ell-s<0$, we have \begin{equation}\norm{\C_0^{-\frac{s}2}\B_\lambda^{-1}\C_0^{-\frac{s}2}u}\leq\norm{\C_0^{-\frac{s}2}\B_\lambda^{-1}\C_0^{-\frac{s}2}\underline{u}}+\norm{\C_0^{-\frac{s}2}\B_\lambda^{-1}\C_0^{-\frac{s}2}\overline{u}}\nonumber\end{equation}\begin{equation}\leq c\lambda^{-\frac{\theta'+1}2}\norm{\underline{u}}_{1-s}+c\lambda^{-\frac{\theta'}2}\norm{\overline{u}}_{\beta-2\ell-s}\nonumber\end{equation}
\begin{equation}= c\lambda^{-\frac{\theta'+1}2}\left(\sum_{\lambda_k^{-1}\leq t}\lambda_k^{2s-2}u_k^2\right)^{\frac12}+c\lambda^{-\frac{\theta'}2}\left(\sum_{\lambda_k^{-1}> t}\lambda_k^{2s+4\ell-2\beta}u_k^2\right)^{\frac12}\nonumber\end{equation}\begin{equation}\leq c\lambda^{-\frac{\theta'+1}2}t^{1-s}\norm{u}+c\lambda^{-\frac{\eta'}2}t^{\beta-2\ell-s}\norm{u}.\nonumber\end{equation} The first term on the right hand side is increasing in $t$, while the second is decreasing, so we can optimize by choosing $t=t(\lambda)$ making the two terms equal, that is $t=\lambda^{\frac{1}{2\Delta}},$ to obtain the claimed rate.
\end{proof}


\section{Posterior Contraction}\label{sec:main}
In this section we employ the developments of the preceding sections to study the posterior consistency of the Bayesian solution to the inverse problem. That is, we consider a family of data sets $y^\dagger=y^\dagger(n)$ given by (\ref{eq:int10}) and study the limiting behavior of the posterior measure $\mu^{y^\dagger}_{\lambda,n}=\G(m_\lambda^\dagger,\C)$ as $n\to\infty$. Intuitively we would hope to recover a measure which concentrates near the true solution $u^\dagger$ in this limit. Following the approach in
~\cite{1232.62079},~\cite{MR1790007},~\cite{MR2418663} and~\cite{simoni2}, 
 we quantify this idea as in (\ref{eq:main1}). 
By the Markov inequality we have \begin{equation}\E^{y^\dagger}\mu^{y^\dagger}_{\lambda,n}\left\{u:\norm{u-u^\dagger}\geq M_n\varepsilon_n\right\}\leq\frac1{M_n^2\varepsilon_n^2}\E^{y^\dagger}\int\norm{u-u^\dagger}^2\mu^{y^\dagger}_{\lambda,n}(du),\nonumber\end{equation}
so that it suffices to show that \begin{equation}\label{eq:main2}\E^{y^\dagger}\int\norm{u-u^\dagger}^2\mu^{y^\dagger}_{\lambda,n}(du)\leq c \varepsilon_n^2.\end{equation}
In addition to $n^{-1}$, there is a second small parameter in the problem, namely the regularization parameter, $\lambda=\frac{1}{n\tau^2}$, and we will choose a relationship between $n$ and $\lambda$ in order to optimize the convergence rates $\varepsilon_n$. We will show that determination of optimal convergence rates follows directly from the operator norm bounds on $\B_\lambda^{-1}$ derived in the previous section, which concern only $\lambda$ dependence; relating $n$ to $\lambda$ then follows as a trivial optimization. Thus, the $\lambda$ dependence of the operator norm bounds in the previous section forms the heart of the posterior contraction analysis. The relationship between $\lambda$ and $n$ will induce a relationship between $\tau$ and $n$, where $\tau$ being the scaling parameter in the prior covariance is the relevant parameter in the current Bayesian framework.

We now present our convergence results. In Theorem \ref{pdeth1} we study the convergence of the posterior mean to the true solution in a range of norms, while in Theorem \ref{pdeth2}  we study the concentration of the posterior near the true solution as described in (\ref{eq:main1}). The proofs of Theorems \ref{pdeth1} and \ref{pdeth2} are provided later in the current section. The two main convergence results, Theorems \ref{pdecor1} and \ref{pdecor2} follow as direct corollaries of Remark \ref{r1} and Theorems \ref{pdeth1} and \ref{pdeth2} respectively.

\begin{thm}\label{pdeth1} Let $u^\dagger\in X^1$.
Under the Assumptions \ref{a2}, we have that, for the choice $\tau=\tau(n)=n^\frac{\theta_2-\theta_1-1}{2(\theta_1-\theta_2+2)}$ and for any $\theta\in[0,1]$   \begin{equation}\E^{y^\dagger}\norm{m_\lambda^\dagger-u^\dagger}_{\eta}^2\leq cn^\frac{\theta+\theta_2-2}{\theta_1-\theta_2+2}, \nonumber\end{equation}
where $\eta=(1-\theta)(\beta-2\ell)+\theta$. The result holds for any $\theta_1, \theta_2\in[0,1]$, chosen so that $\,\E(\kappa^2)<\infty$, for $\kappa=\max\left\{\norm{\xi}_{{2\beta-2\ell-\eta_1}}, \norm{u^\dagger}_{{2-\eta_2}}\right\}$, where $\eta_i=(1-\theta_i)(\beta-2\ell)+\theta_i, \;i=1,2.$
\end{thm}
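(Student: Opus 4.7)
The strategy is to decompose the error $e \coloneq m_\lambda^\dagger - u^\dagger$ into a deterministic ``bias'' piece coming from the regularization acting on $u^\dagger$ and a stochastic ``variance'' piece coming from the noise, bound each in the weighted norm $\|\cdot\|_\eta$ using the operator norm estimates in Section \ref{sec:normbounds}, and then balance the two by choosing $\lambda = \lambda(n)$. Since $\B_\lambda m_\lambda^\dagger = \A^{-1}\C_1^{-1}y^\dagger$ and $y^\dagger = \A^{-1}u^\dagger + \tfrac{1}{\sqrt n}\xi$, writing $\A^{-1}\C_1^{-1}\A^{-1} = \B_\lambda - \lambda\C_0^{-1}$ gives
\begin{equation}
e = -\lambda\B_\lambda^{-1}\C_0^{-1}u^\dagger \;+\; \frac{1}{\sqrt n}\B_\lambda^{-1}\A^{-1}\C_1^{-1}\xi \;\eqcolon\; e_1 + e_2. \nonumber
\end{equation}
One should verify that the right-hand side of $\B_\lambda w = r$ lies in $X^{-1}$ in both cases so that Proposition \ref{laxmil} applies; for $e_1$ this is $\lambda\C_0^{-1}u^\dagger$ with $u^\dagger \in X^1$, and for $e_2$ this will follow from the argument below together with Assumption \ref{a2}(\ref{a2v}).

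For the bias, observe that $\|\C_0^{-1}u^\dagger\|_{-\eta_2} = \|u^\dagger\|_{2-\eta_2}$. Applying Proposition \ref{pdelem1} with the parameter $\theta_2$ (so that the source lies in $X^{-\eta_2}$) gives the two bounds $\|\B_\lambda^{-1}\C_0^{-1}u^\dagger\|_{\beta-2\ell} \le c\lambda^{-\theta_2/2}\|u^\dagger\|_{2-\eta_2}$ and $\|\B_\lambda^{-1}\C_0^{-1}u^\dagger\|_{1}\le c\lambda^{-(\theta_2+1)/2}\|u^\dagger\|_{2-\eta_2}$. Interpolation in the Hilbert scale with parameter $\theta$ then yields $\|\B_\lambda^{-1}\C_0^{-1}u^\dagger\|_\eta \le c\lambda^{-(\theta+\theta_2)/2}\|u^\dagger\|_{2-\eta_2}$, so that $\|e_1\|_\eta \le c\lambda^{(2-\theta-\theta_2)/2}\|u^\dagger\|_{2-\eta_2}$.

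For the variance, the key step is to bring the source $\A^{-1}\C_1^{-1}\xi$ into $X^{-\eta_1}$. Assumption \ref{a2}(\ref{a2v}), applied with $\eta_1 \in [\beta-2\ell,1]$ (which holds for any $\theta_1\in[0,1]$ by the choice $\eta_1 = (1-\theta_1)(\beta-2\ell)+\theta_1$), gives $\|\A^{-1}\C_1^{-1}\xi\|_{-\eta_1} \le c\|\xi\|_{2\beta-2\ell-\eta_1}$. Proposition \ref{pdelem1} with parameter $\theta_1$, combined with the same interpolation in $\theta$, yields $\|\B_\lambda^{-1}\A^{-1}\C_1^{-1}\xi\|_\eta \le c\lambda^{-(\theta+\theta_1)/2}\|\xi\|_{2\beta-2\ell-\eta_1}$. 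Hence $\|e_2\|_\eta \le \tfrac{c}{\sqrt n}\lambda^{-(\theta+\theta_1)/2}\|\xi\|_{2\beta-2\ell-\eta_1}$, and squaring and taking expectations gives $\E\|e_2\|_\eta^2 \le c n^{-1}\lambda^{-(\theta+\theta_1)}\E\|\xi\|_{2\beta-2\ell-\eta_1}^2$, which is finite precisely under the integrability hypothesis $\E(\kappa^2)<\infty$.

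Combining the two contributions via $\E\|e\|_\eta^2 \le 2\|e_1\|_\eta^2 + 2\E\|e_2\|_\eta^2$ gives a bound of the form $c[\lambda^{2-\theta-\theta_2} + n^{-1}\lambda^{-(\theta+\theta_1)}]\E(\kappa^2)$. Balancing the two terms amounts to solving $\lambda^{2+\theta_1-\theta_2} = n^{-1}$, which forces $\lambda = n^{-1/(\theta_1-\theta_2+2)}$ and, via $\lambda=1/(n\tau^2)$, the stated choice $\tau = n^{(\theta_2-\theta_1-1)/(2(\theta_1-\theta_2+2))}$. Substituting back recovers the exponent $(\theta+\theta_2-2)/(\theta_1-\theta_2+2)$. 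The main obstacle is essentially bookkeeping: ensuring that the interpolation formula for $\eta$ coincides with the one produced by the two endpoint estimates in Proposition \ref{pdelem1}, and checking that the use of Assumption \ref{a2}(\ref{a2v}) is legitimate for the range of $\eta_1$ allowed by $\theta_1\in[0,1]$.
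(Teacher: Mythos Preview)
Your proposal is correct and follows essentially the same route as the paper's proof: the same bias--variance splitting via $\B_\lambda m_\lambda^\dagger=\A^{-1}\C_1^{-1}y^\dagger$ and $\A^{-1}\C_1^{-1}\A^{-1}=\B_\lambda-\lambda\C_0^{-1}$, the same use of Assumption \ref{a2}(\ref{a2v}) together with Proposition \ref{pdelem1}, and the same balancing of $\lambda$. The only cosmetic differences are that the paper exploits $\E\xi=0$ to obtain an exact Pythagorean splitting (rather than your factor-$2$ triangle inequality) and interpolates in $\theta$ after optimizing $\tau$, whereas you interpolate first; since the balancing equation $\lambda^{2+\theta_1-\theta_2}=n^{-1}$ is independent of $\theta$, the two orders yield the same rate.
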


\begin{thm}\label{pdeth2}Let $u^\dagger\in X^1$.
Under the Assumptions \ref{a2}, we have that, for  $\tau=\tau(n)=n^\frac{\theta_2-\theta_1-1}{2(\theta_1-\theta_2+2)}$, the convergence in (\ref{eq:main1}) holds with \begin{equation}\varepsilon_n= n^{\frac{\theta_0+\theta_2-2}{2(\theta_1-\theta_2+2)}},\quad\theta_0=\left\{\begin{array}{ll} \frac{2\ell-\beta}{\Delta}, & if 
\;\mbox{$\beta-2\ell\leq0$} 
                                     \\ 0, &otherwise.
                                    \end{array}\right.
                                     \nonumber\end{equation} The result holds for any $\theta_1, \theta_2\in[0,1]$, chosen so that $\E(\kappa^2)<\infty$, for\\  $\kappa=\max\left\{\norm{\xi}_{{2\beta-2\ell-\eta_1}},  \norm{u^\dagger}_{{2-\eta_2}}\right\}$, where $\eta_i=(1-\theta_i)(\beta-2\ell)+\theta_i, \;i=1,2.$

\end{thm}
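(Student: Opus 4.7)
The plan is to reduce the contraction statement to a second-moment bound via Markov's inequality, then perform a bias-variance decomposition and bound each piece using the tools already assembled. As remarked in the excerpt, Markov's inequality yields
\begin{equation}
\E^{y^\dagger}\mu^{y^\dagger}_{\lambda,n}\{u:\|u-u^\dagger\|\ge M_n\varepsilon_n\}\le \frac{1}{M_n^2\varepsilon_n^2}\,\E^{y^\dagger}\!\int \|u-u^\dagger\|^2\,\mu^{y^\dagger}_{\lambda,n}(du),\nonumber
\end{equation}
so it suffices to prove $\E^{y^\dagger}\!\int\|u-u^\dagger\|^2\mu^{y^\dagger}_{\lambda,n}(du)\le c\varepsilon_n^2$. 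Since $\mu^{y^\dagger}_{\lambda,n}=\G(m^\dagger_\lambda,\C)$, the inner integral is exactly $\|m^\dagger_\lambda-u^\dagger\|^2+\tr(\C)$, so after taking expectation we obtain the clean bias-plus-variance split
\begin{equation}
\E^{y^\dagger}\!\int \|u-u^\dagger\|^2 \mu^{y^\dagger}_{\lambda,n}(du)=\E^{y^\dagger}\|m^\dagger_\lambda-u^\dagger\|^2+\tr(\C).\nonumber
\end{equation}

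For the bias term, I would apply Theorem \ref{pdeth1} with the specific choice of its interpolation parameter $\theta$ needed to convert the $X^\eta$-norm into the $\X$-norm. When $\beta-2\ell\le 0$ the relation $\eta=(1-\theta)(\beta-2\ell)+\theta=0$ is solvable with $\theta=\theta_0=(2\ell-\beta)/\Delta\in[0,1]$, so Theorem \ref{pdeth1} directly gives $\E^{y^\dagger}\|m^\dagger_\lambda-u^\dagger\|^2\le c\,n^{(\theta_0+\theta_2-2)/(\theta_1-\theta_2+2)}$. When $\beta-2\ell>0$ one cannot reach $\eta=0$ in the admissible range of $\theta$, but then the choice $\theta=0$ produces $\eta=\beta-2\ell>0$, and since $\|u\|\le c\|u\|_{\beta-2\ell}$ in the Hilbert scale, we again obtain $\E^{y^\dagger}\|m^\dagger_\lambda-u^\dagger\|^2\le c\,n^{(0+\theta_2-2)/(\theta_1-\theta_2+2)}$. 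Both cases combine into the single formula for $\theta_0$ in the statement.

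For the variance term, I would write $\C=n^{-1}\B_\lambda^{-1}$ and exploit the sandwich identity
\begin{equation}
\tr(\B_\lambda^{-1})=\tr\!\bigl(\C_0^{s/2}\,\bigl(\C_0^{-s/2}\B_\lambda^{-1}\C_0^{-s/2}\bigr)\,\C_0^{s/2}\bigr)\le \bigl\|\C_0^{-s/2}\B_\lambda^{-1}\C_0^{-s/2}\bigr\|_{\mathcal{L}(\X)}\,\tr(\C_0^s).\nonumber
\end{equation}
Choosing $s=s_0+\varepsilon$ with $\varepsilon>0$ tiny, Assumption \ref{a2}(\ref{a1}) gives $\tr(\C_0^s)<\infty$, and Proposition \ref{pdelem3} yields the operator-norm bound $c\lambda^{-(2\ell-\beta+s)/\Delta}$ (in the regime $\beta-2\ell\le 0$, using $\theta_0$ as above; otherwise a minor variant with the interpolation between $X^{\beta-2\ell-s}$ and $X^{1-s}$ at the appropriate $\theta$). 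Thus $\tr(\C)\le cn^{-1}\lambda^{-(\theta_0+o(1))}$, and substituting $\lambda=1/(n\tau^2)$ one verifies that with the declared scaling $\tau=n^{(\theta_2-\theta_1-1)/(2(\theta_1-\theta_2+2))}$ the variance contribution is of the same order $\varepsilon_n^2$ as the bias contribution — indeed, the exponent in $\tau$ is precisely the one making the two contributions balance.

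The main obstacle I anticipate is keeping the bookkeeping of the case split $\beta-2\ell\le 0$ versus $\beta-2\ell>0$ consistent throughout both the bias bound (application of Theorem \ref{pdeth1}) and the trace bound (application of Proposition \ref{pdelem3}), while simultaneously making sure that the pair $(\theta_1,\theta_2)$ chosen so that $\E\kappa^2<\infty$ is the same in both halves and produces the claimed exponent. A second, smaller nuisance is to verify that $\tr(\C)$ contributes at most the bias rate and not something strictly worse — this is precisely where the choice $s=s_0+\varepsilon$ in Proposition \ref{pdelem3} and the loss encoded in $s_0$ are paid, which is consistent with the $s_0+\varepsilon$ that appears after specialising to Theorem \ref{pdecor2}.
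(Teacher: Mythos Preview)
Your overall strategy---Markov's inequality, the split $\E^{y^\dagger}\|m_\lambda^\dagger-u^\dagger\|^2+\tr(\C)$, then invoking Theorem~\ref{pdeth1} for the first term and Proposition~\ref{pdelem3} for the second---is sound and matches the paper's route (the paper decomposes $\E^{y^\dagger}\|m_\lambda^\dagger-u^\dagger\|^2$ further into its bias and noise pieces rather than citing Theorem~\ref{pdeth1}, but this is only a cosmetic difference).

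However, your treatment of the trace contains a genuine error. You write $\tr(\C)\le cn^{-1}\lambda^{-(\theta_0+o(1))}$, but Proposition~\ref{pdelem3} actually gives, in the case $\beta-2\ell\le 0$,
\[
\tr(\C)\le \frac{c}{n}\,\lambda^{-\frac{2\ell-\beta+s}{\Delta}}=\frac{c}{n}\,\lambda^{-\theta_0-\frac{s}{\Delta}},\qquad s>s_0,
\]
so the exponent is $\theta_0+s_0/\Delta+o(1)$, not $\theta_0+o(1)$. Consequently the trace is \emph{not} automatically of order $\varepsilon_n^2$, and the choice of $\tau$ was fixed in Theorem~\ref{pdeth1} to balance the two pieces of the MISE, not to balance MISE against trace. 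The missing observation is that the hypothesis $\E\|\xi\|_{2\beta-2\ell-\eta_1}^2<\infty$ forces (via Lemma~\ref{l2}(iii)) $\theta_1>s_0/\Delta$; substituting $\lambda=n^{-1/(\theta_1-\theta_2+2)}$ then gives
\[
\frac{1}{n}\lambda^{-\theta_0-s/\Delta}=n^{\frac{\theta_0+s/\Delta-\theta_1+\theta_2-2}{\theta_1-\theta_2+2}}\le n^{\frac{\theta_0+\theta_2-2}{\theta_1-\theta_2+2}}=\varepsilon_n^2
\]
exactly when $s/\Delta\le\theta_1$, which holds for $s$ close enough to $s_0$. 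In other words, the trace is dominated because the \emph{noise} contribution to the MISE already carries the exponent $\theta_0+\theta_1$ in $\lambda^{-1}$, and $\theta_1$ has enough slack to absorb the extra $s/\Delta$. This is the mechanism the paper makes explicit; without it your argument does not close. In the case $\beta-2\ell>0$ the same comparison works by choosing $s\in(\beta-2\ell,\,1\wedge(s_0+\beta-2\ell)]$ so that $(2\ell-\beta+s)/\Delta\le s_0/\Delta<\theta_1$, which your ``minor variant'' remark does not supply.
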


\begin{rem}\label{r1}\begin{enumerate}
\item[i)]To get convergence in the PDE method we need $\E\norm{u^\dagger}_{{2-\eta_2}}^2<\infty$ for a $\theta_2\leq1$. Under the a priori information that $u^\dagger\in X^\gamma$, we need $\gamma\geq2-\eta_2=1+(1-\theta_2)\Delta$ for some $\theta_2\in[0,1]$. Thus the minimum requirement for convergence is $\gamma=1$ in agreement to our assumption $u^\dagger\in X^1$. On the other hand, to obtain the optimal rate (which corresponds to choosing $\theta_2$ 
as small as possible) we need to choose $\theta_2=\frac{\Delta+1-\gamma}{\Delta}$. If $\gamma>1+\Delta$ then the right hand side is negative so we have to choose $\theta_2=0$, hence we cannot achieve the optimal rate. We say that the method saturates at $\gamma=1+\Delta$ which reflects the fact that the true solution has more regularity than the method allows us to exploit to obtain faster convergence rates.

\item[ii)]To get convergence we also need $\E\norm{\xi}_{{2\beta-2\ell-\eta_1}}^2<\infty$ for a $\theta_1\leq1$. By Lemma \ref{l2}(iii), it suffices to have  $\theta_1>\frac{s_0}\Delta$. This means that we need $\Delta>s_0$, which holds by the Assumption \ref{a2}(\ref{a2delta}), in order to be able to choose $\theta_1\leq1$. On the other hand, since $\Delta>0$ and $s_0\geq 0$, we have that $\frac{s_0}{\Delta}\geq0$ thus we can always choose $\theta_1$ in an optimal way, that is, we can always choose $\theta_1=\frac{s_0+\varepsilon}{\Delta}$ where $\varepsilon>0$ is arbitrarily small. 
\item[iii)]If we want draws from $\mu_0$ to be in $X^\gamma$ then by Lemma \ref{l1}(ii) we need $1-s_0>\gamma$. Since the requirement for the method to give convergence is $\gamma\geq 1$ while $1-s_0\leq1$, we can never have draws exactly matching the regularity of the prior. On the other hand if we want an undersmoothing prior (which according to~\cite{1232.62079} in the diagonal case gives asymptotic coverage equal to 1) we need $1-s_0\leq\gamma$, which we always have. This, as discussed in Section \ref{sec:intro}, gives an explanation to the observation that in both of the above theorems we always have $\tau\to0$ as $n\to\infty$.
\item[iv)] When $\beta-2\ell>0,$ in Theorem \ref{pdeth2} and in Theorem \ref{pdecor2} below, we get suboptimal rates. The reason is that our analysis to obtain the error in the $\X$-norm is based on interpolating between the error in the $X^{\beta-2\ell}$-norm and the error in the $X^1$-norm. When $\beta-2\ell>0,$ interpolation is not possible since the $\X$-norm is now weaker than the $X^{\beta-2\ell}$-norm. However, we can at least bound the error in the $\X$-norm by the error in the $X^{\beta-2\ell}$-norm, thus obtaining a suboptimal rate. Note, that the case $\beta-2\ell>0$ does not necessarily correspond to the well posed case: by Lemma \ref{l2} we can only guarantee that a draw from the noise distribution lives in  $X^\rho, \;\rho<\beta-s_0$, while the range of $\A^{-1}$ is formally $X^{2\ell}$. Hence, in order to have a well posed problem we need $\beta-s_0>2\ell$, or equivalently $\Delta<1-s_0$. This can happen despite our assumption $\Delta>2s_0$, when $s_0<1/3$ and for appropriate choice of $\ell$ and $\beta$. In this case, regularization is unnecessary.
\end{enumerate}\end{rem}

Note that, since the posterior is Gaussian, the left hand side in (\ref{eq:main2}) is the Square Posterior Contraction 
\begin{equation}\label{eq:main3}SPC=\E^{y^\dagger}\norm{m_\lambda^\dagger-u^\dagger}^2+\tr(\C_{\lambda,n}),\end{equation}
which is the sum of the mean integrated squared error (MISE) and the posterior spread. Let $u^\dagger\in X^1$. By Lemma \ref{id}, the relationship (\ref{eq:int10}) between $u^\dagger$ and $y^\dagger$ and the equation (\ref{eq:int11}) for $m_\lambda^\dagger$, we obtain 
\begin{align*}\B_\lambda m_\lambda^\dagger=\A^{-1}\C_1^{-1}y^\dagger&=\A^{-1}\C^{-1}\A^{-1}u^\dagger+\frac{1}{\sqrt{n}}\A^{-1}\C^{-1}\xi\\\text{and}\quad
\B_\lambda u^\dagger&=\A^{-1}\C^{-1}\A^{-1}u^\dagger+\lambda\C_0^{-1}u^\dagger,\end{align*} where the equations hold in $X^{-1}$, since by a similar argument to the proof of Proposition \ref{meanlem} we have $m_\lambda^\dagger\in X^1$. By subtraction we get \begin{equation}\B_\lambda(m_\lambda^\dagger-u^\dagger)=\frac1{\sqrt{n}}\A^{-1}\C_1^{-1}\xi-\lambda \C_0^{-1}u^\dagger.\nonumber\end{equation} 

Therefore  \begin{equation}\label{eq:main5}m_\lambda^\dagger-u^\dagger=\B_\lambda^{-1}\left(\frac1{\sqrt{n}}\A^{-1}\C_1^{-1}\xi-\lambda \C_0^{-1}u^\dagger\right),\end{equation} as an equation in $X^1$. Using the fact that the noise has mean zero and the relation (\ref{eq:int6}), equation (\ref{eq:main5}) implies that we can split the square posterior contraction into three terms\begin{equation}\label{eq:main6}SPC=\norm{\lambda \B_\lambda^{-1}\C_0^{-1}u^\dagger}^2+\E\norm{\frac1{\sqrt{n}}\B_\lambda^{-1}\A^{-1}\C_1^{-1}\xi}^2+\frac1n\tr(B_{\lambda}^{-1}),\end{equation}
provided the right hand side is finite. A consequence of the proof of Theorem \ref{justth} is that $\B_\lambda^{-1}$ is trace class. 
Note that for $\zeta$ a white noise, we have that \begin{equation}\tr(\B_\lambda^{-1})=\E\norm{\B_\lambda^{-\frac12}\zeta}^2=\E\pr{\zeta}{\B_\lambda^{-1}\zeta}=\E\pr{\C_0^{\frac{s}2}\zeta}{\C_0^{-\frac{s}2}\B_\lambda^{-1}\C_0^{-\frac{s}2}\C_0^{\frac{s}2}\zeta}\nonumber\end{equation}\begin{equation}\leq \norm{\C_0^{-\frac{s}2}\B_\lambda^{-1}\C_0^{-\frac{s}2}}_{\mathcal{L}(\X)}\E\norm{\C_0^\frac{s}2\zeta}^2,\nonumber\end{equation} which for $s>s_0$ since by Lemma \ref{l1} we have that $\E\norm{\C_0^{\frac{s}2}\zeta}^2<\infty$, provides the bound \begin{equation}\label{eq:main7}\tr(\B_\lambda^{-1})\leq c\norm{\C_0^{-\frac{s}2}\B_\lambda^{-1}\C_0^{-\frac{s}2}}_{\mathcal{L}(\X)},\end{equation} where $c>0$ is independent of $\lambda$. 
If $q,r$ are chosen sufficiently large so that $\norm{\C_0^{-\frac{q}2-1}u^\dagger}<\infty$ and $\E\norm{\C_0^\frac{r}2\A^{-1}\C_1^{-1}\xi}^2<\infty$ then we see that \begin{equation}\label{eq:main8}SPC\leq c\left(\lambda^2\norm{ \B_\lambda^{-1}}_{\mathcal{L}(X^q,\X)}^2+\frac1n\norm{\B_\lambda^{-1}}_{\mathcal{L}(X^{-r},\X)}^2
+\frac1n\norm{\C_0^{-\frac{s}2}\B_\lambda^{-1}\C_0^{-\frac{s}2}}_{\mathcal{L}(\X)}\right), \end{equation}  where $c>0$ is independent of $\lambda$ and $n$. Thus identifying $\varepsilon_n$ in (\ref{eq:main1}) can be achieved simply through properties of the inverse of $\B_\lambda$ and its parametric dependence on $\lambda$.  

In the following, we are going to study convergence rates for the square posterior contraction, (\ref{eq:main6}), which by the previous analysis will secure that \begin{equation}\E^{y^\dagger}\mu^{y^\dagger}_{\lambda,n}\left\{u\colon \norm{u-u^\dagger}\geq \varepsilon_n\right\}\to0,\nonumber\end{equation} for $\varepsilon_n^2\to0$ at a rate almost as fast as the square posterior contraction. This suggests that the error is determined by the MISE and the trace of the posterior covariance, thus we optimize our analysis with respect to these two quantities. In~\cite{1232.62079} the situation where $\C_0, \C_1$ and $\A$ are diagonalizable in the same eigenbasis is studied, and it is shown that the third term in equation (\ref{eq:main6}) is bounded by the second term in terms of their parametric dependence on $\lambda$. The same idea is used in the proof of Theorem \ref{pdeth2}.

We now provide the proofs of Theorem \ref{pdeth1} and Theorem \ref{pdeth2}.

\begin{proof}[Proof of Theorem \ref{pdeth1}]
Since $\xi$ has zero mean, we have by (\ref{eq:main5}) \begin{equation}\E\norm{m_\lambda^\dagger-u^\dagger}_{{\beta-2\ell}}^2=\lambda^2\norm{\B_\lambda^{-1}\C_0^{-1}u^\dagger}_{{\beta-2\ell}}^2+
\frac1n\E\norm{\B_\lambda^{-1}\A^{-1}\C_1^{-1}\xi}_{{\beta-2\ell}}^2\nonumber\end{equation}and \begin{equation}\E\norm{m_\lambda^\dagger-u^\dagger}_1^2=\lambda^2\norm{\B_\lambda^{-1}\C_0^{-1}u^\dagger}_{1}^2+
\frac1n\E\norm{\B_\lambda^{-1}\A^{-1}\C_1^{-1}\xi}_{1}^2.\nonumber\end{equation}
Using Proposition \ref{pdelem1} and Assumption \ref{a2}(\ref{a2v}), we get \begin{equation}\E\norm{m_\lambda^\dagger-u^\dagger}_{{\beta-2\ell}}^2\leq c\E(\kappa^2)(\lambda^{2-\theta_2}+\frac1n\lambda^{-\theta_1})=c\E(\kappa^2)(n^{\theta_2-2}\tau^{2\theta_2-4}+n^{\theta_1-1}\tau^{2\theta_1})\nonumber\end{equation}and
  \begin{equation}\E\norm{m_\lambda^\dagger-u^\dagger}_{1}^2\leq c\E(\kappa^2)(\lambda^{1-\theta_2}+\frac1n\lambda^{-\theta_1-1})=\frac{c\E(\kappa^2)}{\lambda}(n^{\theta_2-2}\tau^{2\theta_2-4}+n^{\theta_1-1}\tau^{2\theta_1}).\nonumber\end{equation}
Since the common parenthesis term, consists of a decreasing and an increasing term in $\tau$, we optimize the rate by choosing $\tau=\tau(n)=n^p$ such that the two terms become equal, that is, $p=\frac{\theta_2-\theta_1-1}{2(\theta_1-\theta_2+2)}$. 
We obtain, \begin{equation}\E\norm{m_\lambda^\dagger-u^\dagger}_{{\beta-2\ell}}^2\leq c\E(\kappa^2)n^\frac{\theta_2-2}{\theta_1-\theta_2+2}\quad\text{and}\quad\E\norm{m_\lambda^\dagger-u^\dagger}_{1}^2\leq c\E(\kappa^2)n^\frac{\theta_2-1}{\theta_1-\theta_2+2}.\nonumber\end{equation} By interpolating between the two last estimates we obtain the claimed rate.
\end{proof}

\begin{proof}[Proof of Theorem \ref{pdeth2}] Recall equation (\ref{eq:main6})\begin{equation}SPC=\norm{\lambda \B_\lambda^{-1}\C_0^{-1}u^\dagger}^2+\E\norm{\frac1{\sqrt{n}}\B_\lambda^{-1}\A^{-1}\C_1^{-1}\xi}^2+\frac1n\tr(B_{\lambda}^{-1}).\nonumber\end{equation} 
The idea is that the third term is always dominated by the second term. 
 Combining equation (\ref{eq:main7}) with  Proposition \ref{pdelem3}, we have that \begin{equation}\frac1n \tr(\B_\lambda^{-1})\leq c\frac1n\lambda^{-\frac{2\ell-\beta+s}{\Delta}}, \;\forall s\in(\{\beta-2\ell\}\vee \{s_0\},1].\nonumber\end{equation}
\begin{enumerate}
\item[i)] Suppose $\beta-2\ell\leq0,$ so that by Proposition \ref{pdelem1} we have, where $\theta_0=\frac{2\ell-\beta}{\Delta}\in[0,1]$, using Assumption \ref{a2}(\ref{a2v})
\begin{equation}\E\norm{\frac{1}{\sqrt{n}}\B_\lambda^{-1}\A^{-1}\C_1^{-1}\xi}^2\leq c\frac{1}n\E\norm{\xi}^2_{{2\beta-2\ell-\eta_1}}\lambda^{-\theta_1-\theta_0}\nonumber\end{equation}and \begin{equation}\norm{\lambda\B_\lambda^{-1}\C_0^{-1}u^\dagger}^2\leq c\norm{u^\dagger}^2_{2-\eta_2}\lambda^{2-\theta_2-\theta_0}.\nonumber\end{equation} Note that $\theta_1$ is chosen so that $\E\norm{\xi}_{{2\beta-2\ell-\eta_1}}^2<\infty$, that is, by Lemma \ref{l2}(iii), it suffices to have $\theta_1>\frac{s_0}{\Delta}$. Noticing that by choosing $s$ arbitrarily close to $s_0,$ we can have $\frac{2\ell-\beta+s}{\Delta}$  arbitrarily close to $\frac{2\ell-\beta+s_0}{\Delta}$, and since $\theta_1+\theta_0>\frac{2\ell-\beta+s_0}{\Delta}$, we deduce that the third term in equation (\ref{eq:main6}) is always dominated by the second term. Combining, we have that \begin{equation}SPC\leq \frac{c\E(\kappa^2)}{\lambda^{\theta_0}} (\lambda^{2-\theta_2}+\frac{1}n\lambda^{-\theta_1})=\frac{c\E(\kappa^2)}{\lambda^{\theta_0}}(n^{\theta_2-2}\tau^{2\theta_2-4}+n^{\theta_1-1}\tau^{2\theta_1}).\nonumber\end{equation} 
 \item[ii)] Suppose $\beta-2\ell>0$. Using Proposition \ref{pdelem1} and Assumption \ref{a2}(\ref{a2v})  we have
 \begin{equation}\norm{\lambda \B_\lambda^{-1}\C_0^{-1}u^\dagger}^2\leq c\norm{\lambda \B_\lambda^{-1}\C_0^{-1}u^\dagger}^2_{{\beta-2\ell}}\leq c\norm{u^\dagger}^2_{{2-\eta_2}}\lambda^{2-\theta_2}\nonumber\end{equation}
 and \begin{equation}\E\norm{\frac{1}{\sqrt{n}}\B_\lambda^{-1}\A^{-1}\C_1^{-1}\xi}^2\leq c\E\norm{\frac{1}{\sqrt{n}}\B_\lambda^{-1}\A^{-1}\C_1^{-1}\xi}_{{\beta-2\ell}}^2\nonumber\end{equation}\begin{equation}\leq c\frac{1}n\E\norm{\xi}^2_{{2\beta-2\ell-\eta_1}}\lambda^{-\theta_1},\nonumber\end{equation}where as before $\theta_1>\frac{s_0}{\Delta}.$ 
 The third term in equation (\ref{eq:main6}) is again dominated by the second term, since on the one hand $\theta_1>\frac{s_0}{\Delta}$ and on the other hand, since $\beta-2\ell>0$, we can always choose $\{\beta-2\ell\}\vee\{s_0\}<s\leq 1\wedge \{s_0+\beta-2\ell\}$ to get $\frac{2\ell-\beta+s}{\Delta}\leq\frac{s_0}{\Delta}.$ 
Combining the three estimates we have that \begin{equation}SPC\leq c\E(\kappa^2)(n^{\theta_2-2}\tau^{2\theta_2-4}+n^{\theta_1-1}\tau^{2\theta_1}).\nonumber\end{equation} \end{enumerate}
In both cases, the common term in the parenthesis consists of a decreasing and an increasing term in $\tau$, thus we can optimize by choosing $\tau=\tau(n)=n^p$ making the two terms equal, that is, $p=\frac{\theta_1-\theta_2+1}{2\theta_2-2\theta_1-4}$, to get the claimed rates. 
\end{proof}


\section{Examples}\label{sec:ex}
We now present some nontrivial examples satisfying Assumptions \ref{a2}.

Let $\Omega\subset \R^d, \;d=1,2,3,$ be a bounded and open set. We define $\A_0\coloneq -\boldsymbol\Delta$, where $\boldsymbol\Delta$ is the Dirichlet Laplacian which is the Friedrichs extension of the classical Laplacian defined on $C^2_0(\Omega),$ that is, $\A_0$ is a self-adjoint operator with a domain $\D(\A_0)$ dense in $\X\coloneq L^2(\Omega)$~\cite{MR1892228}. For $\partial\Omega$  sufficiently smooth we have $\D(\A_0)=H^2(\Omega)\cap H_0^1(\Omega)$. It is well known that $\A_0$ has a compact inverse and that it possesses an eigensystem $\{\rho_k^2,e_k\}_{k=1}^\infty,$ where the eigenfunctions $\{e_k\}$ form a complete orthonormal basis of $\X$ and the eigenvalues $\rho_k^2$ behave asymptotically like $k^{\frac2d}$~\cite{MR2192832}.

In Subsections \ref{ex1} and \ref{ex2}, we consider the inverse problem to find $u$ from $y$, where \[y=z+\frac{1}{\sqrt{n}}\xi,\]for $z$ solving the partial differential equation \begin{align*}-\boldsymbol\Delta z+qz&=u \quad\mbox{in}\quad\Omega,\\
z&=0\quad\mbox{on}\quad\partial\Omega,\end{align*} that is, $\A_0z+qz=u$, where $q$ is a nonnegative real function of certain regularity. We choose prior and noise distributions with covariance operators which are not simultaneously diagonalizable with the forward operator. Later on, in Subsection \ref{ex3}, we consider more complicated examples and in particular, we consider fractional powers of the Dirichlet Laplacian in the forward operator, as well as more general choices of prior and noise covariance operators. 

Our general strategy for proving the validity of our norm equivalence assumptions is:
\begin{enumerate}\item[i)]if needed, use Proposition \ref{inter} below to reduce the range of spaces required to check an assumption's validity to a finite set of spaces;
\item[ii)]reformulate the assumptions as statements regarding the boundedness of operators of the form considered in Lemma \ref{prolem} below.
\end{enumerate}
The statement of Proposition \ref{inter}, which is a well known result from interpolation theory, and the statement and proof of Lemma \ref{prolem} are postponed to Subsection \ref{interpolation}.

\subsection{Example 1 - Non-diagonal forward operator}\label{ex1}
We study the Bayesian inversion of the operator $\A^{-1}\coloneq (\A_0+\M)^{-1}$ where $\M\colon L^2(\Omega)\to L^2(\Omega)$ is the multiplication operator by a nonnegative function $q\in W^{2,\infty}(\Omega)$. We assume that the observational noise is white, so that $\C_1=I$, and we set the prior covariance operator to be $\C_0=\A_0^{-2}$.

The operator $\C_0$ is trace class. Indeed, let $\lambda_k^2=\rho_k^{-4}$ be its eigenvalues. Then they behave asymptotically like $k^{-\frac4d}$ and $\sumi k^{-\frac4d}<\infty$ for $d<4$. Furthermore, we have that $\sumi\lambda_k^{2s}\leq c\sumi k^{-\frac{4s}{d}}<\infty,$ provided $s>\frac{d}4$, that is, the Assumption \ref{a2}(\ref{a1}) is satisfied with \begin{equation}s_0=\left\{\begin{array}{ll} $1/4$, &
\;\mbox{$d=1,$} 
                                     \\ 1/2, & \;\mbox{$d=2,$}
                                     \\ 3/4, & \;\mbox{$d=3$.}
                                    \end{array}\right.
                                     \nonumber\end{equation}
We define the Hilbert scale induced by $\C_0=\A_0^{-2}$, that is, $(X^s)_{s\in\R}$, for $X^s\coloneq \overline{\mathcal{M}}^{\|\cdot\|_{s}}$, where\begin{equation}\mathcal{M}=\bigcap_{l=0}^\infty\D(\A_0^{2l}),\;\pr{u}{v}_{s}\coloneq \pr{\A_0^{s}u}{\A_0^{s}v} \quad\text{and}\quad {\|u\|}_{s}\coloneq \norm{\A_0^{s}u}.\nonumber\end{equation} Observe, $X^0=\X=L^2(\Om)$.                                                           
                                     
Our aim is to show that $\C_1\simeq \C_0^{\beta}$ and $\A^{-1}\simeq \C_0^{\ell}$, where $\beta=0$ and $\ell=\frac12$, in the sense of the Assumptions \ref{a2}. We have $\Delta=2\ell-\beta+1=2$. Since for $d=1,2,3$ we have $0<s_0<1$, the Assumption \ref{a2}(\ref{a2delta}) is satisfied. Moreover, note that since $\C_1=I$ the Assumptions \ref{a2}(\ref{a2ii}) and (\ref{a2iii}) are trivially satisfied.                               

We now show that Assumptions \ref{a2} (\ref{a2i}), (\ref{a2iv}), (\ref{a2v}) are also satisfied. In this example the three assumptions have the form 
\begin{enumerate}
\item[3.]$\norm{(\A_0+\M)^{-1}u}\asymp\norm{\A_0^{-1}u}, \;\forall u\in X^{-1};$
\item[6.]$\norm{\A_0^s(\A_0+\M)^{-1}u}\leq c_3\norm{\A_0^{s-1}u}, \;\forall u\in X^{s-1}, \;\forall s\in(s_0,1]$;
\item[7.]$\norm{\A_0^{-\eta}(\A_0+\M)^{-1}u}\leq c_4\norm{\A_0^{-\eta-1}u}, \;\forall u\in X^{-\eta-1}, \;\forall \eta\in[-1,1]$. 
\end{enumerate}
Observe that Assumption (\ref{a2iv}) is implied by Assumption (\ref{a2v}). 
\begin{prop}\label{proass}The Assumptions \ref{a2} are satisfied in this example.
\end{prop}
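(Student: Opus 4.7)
The plan is to follow the two-step strategy explicitly outlined just before the statement. Assumptions (1), (2), (4), (5) have already been verified in the running discussion, so only (3), (6), (7) remain. First observe that (6) is a special case of (7): setting $\eta = -s$ turns the inequality of (7) into the inequality of (6), and the range $s \in (s_0,1]$ sits inside $-\eta \in [-1,1]$ since $0<s_0<1$. Hence only (3) and the reformulated (7) need separate treatment.

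For (7), I would apply Proposition \ref{inter} to reduce the continuum $\eta \in [-1,1]$ to just the two endpoints $\eta = \pm 1$. These translate to the assertion that the two operators
\[
T_{-1} := \A_0(\A_0+\M)^{-1}, \qquad T_{1} := \A_0^{-1}(\A_0+\M)^{-1}\A_0^{2}
\]
are bounded on $L^2(\Om)$. Both are of the form covered by Lemma \ref{prolem}, and the boundedness will follow from the hypothesis $q \in W^{2,\infty}(\Om)$. For $T_{-1}$ use the identity
\[
T_{-1} = I - \M(\A_0+\M)^{-1};
\]
since $\M$ is bounded on $L^2$ (as $q\in L^\infty$) and $(\A_0+\M)^{-1}$ is bounded on $L^2$, the claim follows. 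For $T_1$ use the dual algebraic manipulation $(\A_0+\M)^{-1}\A_0^2 = \A_0 - (\A_0+\M)^{-1}\M\A_0$ to write
\[
T_1 = I - \A_0^{-1}(\A_0+\M)^{-1}\M\A_0,
\]
and bound the second term by Lemma \ref{prolem}, which in this regime delivers $\M : X^{-1}\to X^{-1}$ by duality from the $W^{2,\infty}$ regularity of $q$, after which $(\A_0+\M)^{-1}:X^{-1}\to X^{1}$ and $\A_0^{-1}:X^{1}\to X^{2}\hookrightarrow X$ close the chain.

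For (3), the upper bound $\|(\A_0+\M)^{-1}u\| \le c\|\A_0^{-1}u\|$ is precisely (7) at $\eta = 0$, which is delivered by the interpolation above. The reverse bound $\|\A_0^{-1}u\| \le c\|(\A_0+\M)^{-1}u\|$ is equivalent to the boundedness of $\A_0(\A_0+\M)^{-1}=T_{-1}$ on $L^2$, already established. The main obstacle will be the $\eta=1$ endpoint in (7): one needs $\M$ to act boundedly on the negative-order space $X^{-1}$, which truly requires the $W^{2,\infty}$ (rather than merely $L^\infty$) regularity of $q$; this is precisely the situation Lemma \ref{prolem} is designed to handle, and modulo invoking that lemma correctly for the Dirichlet boundary conditions inherited from $\A_0$, the remainder of the proof is routine algebra and interpolation.
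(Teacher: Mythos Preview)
Your overall plan—reduce (7) to endpoints and recover (3) from the $\eta=0$ instance of (7) together with a trivial inverse bound—is close to the paper's strategy, but there is a genuine error in your treatment of the endpoint $\eta=1$.

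The claim $(\A_0+\M)^{-1}:X^{-1}\to X^{1}$ is false. In this example the Hilbert scale is defined by $\|u\|_s=\|\A_0^{s} u\|$, so $\A_0$ shifts the index by exactly one: $\A_0:X^s\to X^{s-1}$. Consequently $(\A_0+\M)^{-1}$ gains only one index, mapping $X^{-1}\to X^{0}$, not $X^{-1}\to X^{1}$. Already for $q=0$ this is clear: asking for $\A_0^{-1}:X^{-1}\to X^1$ amounts to $\A_0\,\A_0^{-1}\A_0$ bounded on $\X$, which fails. Your chain for bounding $\A_0^{-1}(\A_0+\M)^{-1}\M\A_0$ therefore breaks at this step.

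The paper avoids this by working with the single operator $\T\coloneq(\A_0+\M)^{-1}\A_0=(I+\A_0^{-1}\M)^{-1}$ and checking $\T\in\mathcal{L}(X^{-1})\cap\mathcal{L}(\X)\cap\mathcal{L}(X^1)$. Conjugating gives $\A_0^{-1}\T\A_0=(I+\A_0^{-2}\M\A_0)^{-1}$ and $\A_0\T\A_0^{-1}=(I+\M\A_0^{-1})^{-1}$, each of the form $(I+K)^{-1}$ with $K$ compact; Lemma \ref{prolem} then gives boundedness via the Fredholm alternative. Your decomposition $T_1=I-\A_0^{-1}(\A_0+\M)^{-1}\M\A_0$ can be salvaged, but only by using $(\A_0+\M)^{-1}:X^{-1}\to X^0$, which is exactly the statement $\T\in\mathcal{L}(\X)$—so you cannot get away with just the two endpoints $\eta=\pm1$. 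This is also why Proposition \ref{inter}, as stated, requires the middle point: it interpolates between $\X$ and $X^{\pm1}$ separately, not directly between $X^{-1}$ and $X^{1}$.

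A smaller slip: the reverse inequality in (3), $\|\A_0^{-1}u\|\le c\|(\A_0+\M)^{-1}u\|$, is equivalent to boundedness of $\A_0^{-1}(\A_0+\M)=I+\A_0^{-1}\M$ (the paper's $\T^{-1}$), not of your $T_{-1}=\A_0(\A_0+\M)^{-1}$. Both operators are bounded, so the conclusion is unaffected.
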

\begin{proof}
We only need to show that Assumptions (\ref{a2i}) and (\ref{a2v}) hold.
\begin{enumerate}
\item[3.]The assumption is equivalent to $\T\coloneq(\A_0+\M)^{-1}\A_0$ and $\T^{-1}=\A_0^{-1}(\A_0+\M)$ being bounded in $\X$. Since  $\T^{-1}=I+\A_0^{-1}\M$ which is bounded in $\X$, we only need to show that $\T$ is bounded. Indeed, $(\A_0+\M)^{-1}\A_0=(I+\A_0^{-1}\M)^{-1}$, which is bounded by Lemma \ref{prolem} applied for $t=-1, s=1$. 
\item[7.] By Proposition \ref{inter}, it suffices to show $\T\in\mathcal{L}(\X)\cap\mathcal{L}(X^1)\cap\mathcal{L}(X^{-1}).$ We have already shown that $\T\in\mathcal{L}(\X)$. For $\T\in\mathcal{L}(X^1),$ note that it is equivalent to $\A_0\T\A_0^{-1}=(I+\M\A_0^{-1})^{-1}\in\mathcal{L}(\X)$, which holds by Lemma \ref{prolem} applied for $t=s=1$. Finally, for $\T\in\mathcal{L}(X^{-1})$, note that it is equivalent to $\A_0^{-1}\T\A_0=(I+\A_0^{-2}\M\A_0)^{-1}\in\mathcal{L}(\X)$, which holds by Lemma \ref{prolem} applied for $t=-1, s=1$.
\end{enumerate}
\end{proof}

We can now apply Theorem \ref{pdecor1} and Theorem \ref{pdecor2} to get the following convergence result.

\begin{thm}
Let $u^\dagger\in X^\gamma, \gamma\geq1$. Then, for $\tau=\tau(n)=n^\frac{4-d-4(\gamma\wedge3)-\varepsilon}{8(\gamma\wedge3)+8+2d+2\varepsilon}$, the convergence in (\ref{eq:main1}) holds with $\varepsilon_n=n^{-e}$, where \begin{equation}e=\left\{\begin{array}{ll} \frac{2\gamma}{4+d+4\gamma+2\varepsilon}
, & if 
\;\mbox{$\gamma<3$} 
                                     \\ \frac{6}{16+d+2\varepsilon}, & if\;\mbox{$\gamma\geq3$,}\end{array}\right.\nonumber\end{equation}for $\varepsilon>0$ arbitrarily small and where $d=1,2,3,$ is the dimension. Furthermore, for $t\in[-1,1)$, for the same choice of $\tau$, we have $\E\norm{m_\lambda^\dagger-u^\dagger}^2_t\leq cn^{-h},$ where \begin{equation}h=\left\{\begin{array}{ll} \frac{4\gamma-4t}{4+d+4\gamma+2\varepsilon}
, & if 
\;\mbox{$\gamma<3$} 
                                     \\ \frac{12-4t}{16+d+2\varepsilon}, & if\;\mbox{$\gamma\geq3$.}\end{array}\right.\nonumber\end{equation} For $t=1$ the above rate holds provided $\gamma>1$.
\end{thm}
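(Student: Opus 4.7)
The plan is to observe that all the work is already done: Proposition \ref{proass} verifies Assumptions \ref{a2} in this example with the specific parameter values $\beta=0$, $\ell=\tfrac12$, and $s_0=d/4$ (so $\Delta=2\ell-\beta+1=2$), and Theorems \ref{pdecor1} and \ref{pdecor2} are directly applicable. The proof amounts to substituting these numbers into the general formulae, matching the branch $\gamma\wedge 3$ with the split $\gamma\le\Delta+1=3$ versus $\gamma>\Delta+1$, and simplifying.

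For the contraction rate $\varepsilon_n$: since $\beta-2\ell=-1<0$, in Theorem \ref{pdecor2} we are always in the first subcase of each branch. With $\Delta=2$ and $s_0=d/4$, the prescribed scaling becomes
\[\tau(n)=n^{-\frac{\gamma\wedge 3-1+d/4+\varepsilon}{2(\gamma\wedge 3+1+d/4+\varepsilon)}},\]
and after multiplying numerator and denominator by $4$ and relabeling $\varepsilon$ this is exactly the stated $\tau(n)$. The resulting rates $n^{-\gamma/(2(\gamma+1+d/4+\varepsilon))}$ for $\gamma\in[1,3]$ and $n^{-3/(2(4+d/4+\varepsilon))}$ for $\gamma>3$, after the same rescaling, give the two values of $e$.

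For the weighted-norm bound on the mean: I will apply Theorem \ref{pdecor1}, identifying the index $\eta$ in that theorem with the $t$ of the present statement. Since $\eta=(1-\theta)(\beta-2\ell)+\theta=2\theta-1$ in this example, the choice $t\in[-1,1]$ corresponds to the unique $\theta=(t+1)/2\in[0,1]$. For $t\in[-1,1)$ we have $\theta\in[0,1)$, so case (i) of Theorem \ref{pdecor1} covers $\gamma\in(1,3]$, case (iii) covers $\gamma=1$, and case (ii) covers $\gamma>3$; substituting $\theta=(t+1)/2$, $\Delta=2$, $s_0=d/4$ into the exponents $(\Delta+\gamma-1-\theta\Delta)/(\Delta+\gamma-1+s_0+\varepsilon)$ and $(2-\theta)\Delta/(2\Delta+s_0+\varepsilon)$ gives $(\gamma-t)/(\gamma+1+d/4+\varepsilon)$ and $(3-t)/(4+d/4+\varepsilon)$ respectively, which after the same rescaling yield the claimed $h$. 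The boundary value $t=1$ corresponds to $\theta=1$, for which case (iii) explicitly fails at $\gamma=1$ but case (i) still applies for $\gamma>1$, explaining the final sentence of the statement.

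There is no real analytical obstacle: everything reduces to careful algebraic bookkeeping. The only point that warrants attention is the translation between the parameter $\theta$ of the general theorem and the norm index $t$, together with handling the boundary case $\gamma=1$, $t=1$ which is the sole instance in which one must pass from case (iii) to case (i) of Theorem \ref{pdecor1}. All $\varepsilon$'s throughout can be absorbed by relabeling a single arbitrarily small positive parameter.
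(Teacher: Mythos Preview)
Your proposal is correct and is precisely the approach the paper takes: the paper simply states that the result follows by applying Theorems~\ref{pdecor1} and~\ref{pdecor2} (with Assumptions~\ref{a2} verified in Proposition~\ref{proass}), leaving the substitution of $\beta=0$, $\ell=\tfrac12$, $s_0=d/4$, $\Delta=2$ to the reader. Your identification $t=\eta=2\theta-1$ and your handling of the boundary case $\gamma=1$, $t=1$ via case~(iii) of Theorem~\ref{pdecor1} are exactly right, and your remark about absorbing the various $\varepsilon$-rescalings is the correct way to reconcile the constants.
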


\subsection{Example 2 - A fully non-diagonal example}\label{ex2}
As in Example \ref{ex1}, we study the Bayesian inversion of the operator $\A^{-1}=(\A_0+\M)^{-1}$ for a nonnegative $q\in W^{2,\infty}(\Omega)$. We assume that the observational noise is Gaussian with covariance operator $\C_1\coloneq(\A_0^{\frac14}+\Mr)^{-2}$, where $\Mr: L^2(\Omega)\to L^2(\Omega)$ is the multiplication operator by a nonnegative function $r\in W^{4,\infty}(\Omega)$. As before, we set the prior covariance operator to be $\C_0=\A_0^{-2}$, thus the Assumption \ref{a2}(\ref{a1}) is satisfied with the same $s_0$ and we work in the same Hilbert scale $(X^s)_{s\in\R}.$ 

We show that $\C_1\simeq \C_0^\beta$ and $\A^{-1}\simeq \C_0^\ell$, where $\beta=\frac14$ and $\ell=\frac12$, in the sense of the Assumptions \ref{a2}(\ref{a2i})-(\ref{a2v}). First note that we have $\Delta=2\ell-\beta+1=\frac74>2s_0$ for $d=1,2,3$, so that the Assumption \ref{a2}(\ref{a2delta}) is satisfied. The rest of the assumptions have the form 
\begin{enumerate}
\item[3.]$\norm{(\A_0^\frac14+\Mr)(\A_0+\M)^{-1}u}\asymp\norm{\A_0^{-\frac34}u}, \;\forall u\in X^{-\frac34};$
\item[4.]$\norm{\A_0^{\rho}(\A_0^\frac14+\Mr)^{-1}u}\leq c_1\norm{\A_0^{\rho-\frac14}u}, \;\forall u\in X^{\rho-\frac14}, \forall \rho\in[\lceil-s_0-\frac34\rceil,\frac14-s_0);$
\item[5.]$\norm{\A_0^{-s}(\A_0^\frac14+\Mr)u}\leq c_2\norm{\A_0^{\frac14-s}}, \;\forall u\in X^{\frac14-s}, \forall s\in(s_0,1];$
\item[6.]$\norm{\A_0^s(\A_0^\frac14+\Mr)(\A_0+\M)^{-1}u}\leq c_3\norm{\A_0^{s-\frac34}u}, \;\forall u\in X^{s-\frac34}, \;\forall s\in(s_0,1]$;
\item[7.]$\norm{\A_0^{-\eta}(\A_0+\M)^{-1}(\A_0^\frac14+\Mr)^2u}\leq c_4\norm{\A_0^{-\eta-\frac12}u}, \;\forall u\in X^{-\eta-\frac12}, \;\forall \eta\in[-\frac34,1]$. 
\end{enumerate}

\begin{prop} The Assumptions \ref{a2} are satisfied in this example.
\end{prop}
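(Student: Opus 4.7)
The plan is to follow the same two-step strategy used in Proposition \ref{proass}: reduce each of the continuum-parameter assumptions to a finite number of endpoint checks using interpolation (Proposition \ref{inter}), then verify each endpoint as the boundedness on $\X$ of a product of ``simple'' operators to which Lemma \ref{prolem} applies. Throughout, I will repeatedly factor $(\A_0^{1/4}+\Mr)\A_0^{-1/4}=I+\Mr\A_0^{-1/4}$ and $(\A_0+\M)^{-1}=(I+\A_0^{-1}\M)^{-1}\A_0^{-1}$, so that after absorbing the purely fractional powers of $\A_0$ each assumption reduces to statements about operators of the form $\A_0^{a}(I+\A_0^{b}\mathcal{M}_q\A_0^{c})^{\pm 1}\A_0^{-a}$ or $\A_0^{a}(I+\A_0^{b}\mathcal{M}_r\A_0^{c})^{\pm 1}\A_0^{-a}$. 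The regularity hypotheses $q\in W^{2,\infty}(\Omega)$ and $r\in W^{4,\infty}(\Omega)$ are exactly what is needed to bring the various indices into the admissible range of Lemma \ref{prolem}.

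For assumption (3), the norm equivalence is equivalent to asking that $\T:=(\A_0^{1/4}+\Mr)(\A_0+\M)^{-1}\A_0^{3/4}$ and its inverse be bounded on $\X$. Writing $\T=(I+\Mr\A_0^{-1/4})\cdot \A_0^{1/4}(I+\A_0^{-1}\M)^{-1}\A_0^{-1/4}$, the first factor is bounded because $r\in L^{\infty}$, and the second is handled by Lemma \ref{prolem} with parameters $(t,s)=(-1,1)$ conjugated by $\A_0^{1/4}$; the inverse factors similarly. For assumption (5), the operator to bound is $\A_0^{-s}(\A_0^{1/4}+\Mr)\A_0^{s-1/4}=I+\A_0^{-s}\Mr\A_0^{s-1/4}$; by Proposition \ref{inter} it suffices to verify this at the endpoints $s=0$ and $s=1$, which again reduces to Lemma \ref{prolem}. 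Assumption (4) is similarly reduced, after noting that $\rho$ ranges over a bounded interval (for $d=1,2,3$, the integer endpoints are $0$ and $-1$) so two endpoint checks suffice.

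For assumption (6), I would factor $\A_0^s(\A_0^{1/4}+\Mr)(\A_0+\M)^{-1}\A_0^{3/4-s} = \A_0^s(I+\Mr\A_0^{-1/4})\A_0^{-s}\cdot \A_0^{s+1/4}(I+\A_0^{-1}\M)^{-1}\A_0^{-s-1/4}$ and use Proposition \ref{inter} to reduce $s\in(s_0,1]$ to the two endpoints $s_0$ and $1$; each conjugation is then in the scope of Lemma \ref{prolem}. Assumption (7) is the most involved: its operator can be written as $\A_0^{-\eta}(I+\A_0^{-1}\M)^{-1}\A_0^{\eta}\cdot \A_0^{-\eta-1}(\A_0^{1/4}+\Mr)^2\A_0^{\eta+1/2}$, and Proposition \ref{inter} reduces $\eta\in[-3/4,1]$ to its endpoints (plus, if needed, an intermediate integer point). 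Expanding $(\A_0^{1/4}+\Mr)^2=\A_0^{1/2}+\A_0^{1/4}\Mr+\Mr\A_0^{1/4}+\Mr^2$ and conjugating by appropriate fractional powers of $\A_0$, each resulting term is of the type handled by Lemma \ref{prolem}, with the higher regularity $r\in W^{4,\infty}$ providing exactly the slack needed for the worst term $\Mr^2$ when $\eta=1$.

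The main obstacle I foresee is assumption (7) at $\eta=1$, where the operator $\A_0^{-1}(I+\A_0^{-1}\M)^{-1}(\A_0^{1/4}+\Mr)^2\A_0^{3/2}$ mixes the two multiplication operators together with relatively strong negative powers of $\A_0$; I would need to commute $\Mr$ past $\A_0^{1/4}$ using a commutator estimate based on $r\in W^{4,\infty}$, and analogously push $\M$ past $\A_0^{-1}$ using $q\in W^{2,\infty}$, before Lemma \ref{prolem} becomes directly applicable. The other endpoint cases and the remaining assumptions should then follow mechanically, and an application of Theorems \ref{pdecor1}--\ref{pdecor2} would yield the corresponding contraction rates (with $\Delta=7/4$ and the same $s_0$ as in Example 1).
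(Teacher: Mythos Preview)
Your proposal is correct and follows the same two-step strategy as the paper: reduce each assumption to a finite set of endpoint checks via Proposition \ref{inter}, then verify each endpoint as boundedness on $\X$ of a product of operators covered by Lemma \ref{prolem}. The algebraic manipulations differ only cosmetically. For assumption (\ref{a2v}) the paper keeps $(\A_0^{1/4}+\Mr)^2$ as a \emph{product}, writing the full operator at each endpoint as a string of three factors of the form $(I+K_i)^{\pm1}$ (for example, at $X^{-1}$ one obtains $(I+\A_0^{-2}\M\A_0)^{-1}(I+\A_0^{-2}\Mr\A_0^{7/4})(I+\A_0^{-7/4}\Mr\A_0^{3/2})$), whereas you expand the square as a \emph{sum} of four terms. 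Both routes work; the product factorization is slightly cleaner because every piece is immediately in the exact form of Lemma \ref{prolem}.

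Your stated concern about the $\eta=1$ endpoint is unwarranted: no separate commutator estimate is needed. In the paper's factorization the worst index that appears is $\A_0^{-2}\Mr\A_0^{7/4}$, which is exactly $K_1$ in Lemma \ref{prolem} with $t=-2$, $s=1/4$, requiring $a_r\ge 2\lceil 2\rceil=4$; the hypothesis $r\in W^{4,\infty}(\Omega)$ delivers precisely this. Your own expansion produces the same indices (e.g.\ $\A_0^{-2}\Mr\A_0^{7/4}$ and $\A_0^{-2}\mathcal{M}_{r^2}\A_0^{3/2}$, the latter with $t=-3/2$, $s=1/2$ and $r^2\in W^{4,\infty}$), so Lemma \ref{prolem} applies directly there too. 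One small cleanup: for assumption (\ref{a2iv}) interpolate on $[0,1]$ rather than $(s_0,1]$, since Proposition \ref{inter} is stated between $\X$ and $X^{\pm t}$ and $s_0$ is not a natural endpoint.
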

\begin{proof}We have already seen that the first two assumptions are satisfied.
\begin{enumerate}
\item[3.] We need to show that $\Ss\coloneq\Sr\Sq^{-1}\A_0^\frac34$ and $\Ss^{-1}$ are bounded operators in $\X$. Indeed, $\Ss=(I+\Mr\A_0^{-\frac14})(I+\A_0^{-\frac34}\M\A_0^{-\frac14})^{-1}$ which is bounded by Lemma \ref{prolem} applied for $t=s=\frac14$ and $t=\frac14, s=1$. For $\Ss^{-1}$ we have, $\Ss^{-1}=(I+\A_0^{-\frac34}\M\A_0^{-\frac14})(I+\Mr\A_0^{-\frac14})^{-1}$, which again by Lemma \ref{prolem} is the composition of two bounded operators.
 \item[4.] Since $\frac14-s_0=0, -\frac14, -\frac12$ for $d=1,2,3$ respectively, it suffices to show that it holds for all $\rho\in[-1,0]$. By Proposition \ref{inter} it suffices to show that $\mathcal{S}\coloneq\Sr^{-1}\A_0^\frac14\in\mathcal{L}(\X)\cap\mathcal{L}(X^{-1})$. This is equivalent to showing that $\Ss=(I+\A_0^{-\frac14}\Mr)^{-1}$ and $\A_0^{-1}\Ss\A_0=(I+\A_0^{-\frac54}\Mr\A_0)^{-1}$ are bounded in $\X$, which holds by Lemma \ref{prolem}.
\item[5.] By Proposition \ref{inter} it suffices to show that $\Ss\coloneq\Sr\A_0^{-\frac14}\in\mathcal{L}(\X)\cap\mathcal{L}(X^{-1})$. Indeed, $\Ss=I+\Mr\A_0^{-\frac14}\in\mathcal{L}(\X)$. On the other hand, to show $\Ss\in\mathcal{L}(X^{-1})$ it is equivalent to show that $\A_0^{-1}\Ss\A_0\in\mathcal{L}(X)$. Indeed,  $\A_0^{-1}\Ss\A_0=I+\A_0^{-1}\Mr\A_0^\frac34$ which is bounded by Lemma \ref{prolem}.

\item[6.] By Proposition \ref{inter} it suffices to show that $\Ss\coloneq\Sr\Sq^{-1}\A_0^\frac34\in\mathcal{L}(\X)\cap\mathcal{L}(X^1)$. Indeed, we have already shown in part $(3)$ of the current proof that $\Ss\in\mathcal{L}(\X)$. To show $\Ss\in\mathcal{L}(X^1)$ it is equivalent to show that $\A_0\Ss\A_0^{-1}\in\mathcal{L}(\X)$. Indeed, $\A_0\Ss\A_0^{-1}=(I+\A_0\Mr\A_0^{-\frac54})(I+\A_0^{\frac14}\M\A_0^{-\frac54})^{-1}$ which by Lemma \ref{prolem} is the composition of two bounded operators in $\X$..

\item[7.] By Proposition \ref{inter} it suffices to show that $\Ss\coloneq(\A_0+\M)^{-1}\Sr^2\A_0^{\frac12}\in\mathcal{L}(\X)\cap\mathcal{L}(X^{-1})\cap\mathcal{L}(X^1)$. We start by showing $\Ss\in\mathcal{L}(\X)$. Indeed, we have $\Ss=(I+\A_0^{-1}\M)^{-1}(I+\A_0^{-1}\Mr\A_0^{\frac34})(I+\A_0^{-\frac34}\Mr\A_0^\frac12)$, which by Lemma \ref{prolem}, is the composition of three bounded operators. For showing $\Ss\in\mathcal{L}(X^{-1})$ it is equivalent to show that $\A_0^{-1}\Ss\A_0\in\mathcal{L}(\X)$. Indeed, $\A_0^{-1}\Ss\A_0=(I+\A_0^{-2}\M\A_0)^{-1}(I+\A_0^{-2}\Mr\A_0^\frac74)(I+\A_0^{-\frac74}\Mr\A_0^\frac32)$, which by Lemma \ref{prolem}, is the composition of three bounded operators. Finally, we show that $\Ss\in\mathcal{L}(X^1)$ or equivalently $\A_0 \Ss\A_0^{-1}\in\mathcal{L}(\X)$. Indeed we have 
 $\A_0\Ss\A_0^{-1}=(I+\M\A_0^{-1})^{-1}(I+\Mr\A_0^{-\frac14})(I+\A_0^{\frac14}\Mr\A_0^{-\frac12})$, which again by Lemma \ref{prolem}, is the composition of three bounded operators.
\end{enumerate}
\end{proof}

We can now apply Theorem \ref{pdecor1} and Theorem \ref{pdecor2} to get the following convergence result.

\begin{thm}
Let $u^\dagger\in X^\gamma, \gamma\geq1$. Then, for $\tau=\tau(n)=n^\frac{4-d-(4\gamma\wedge 11)-\varepsilon}{(8\gamma\wedge22)+6+2d+2\varepsilon}$, the convergence in (\ref{eq:main1}) holds with $\varepsilon_n=n^{-e}$, where \begin{equation}e=\left\{\begin{array}{ll} \frac{2\gamma}{3+d+4\gamma+2\varepsilon}
, & if 
\;\mbox{$\gamma<\frac{11}4$} 
                                     \\ \frac{11}{28+2d+2\varepsilon}, & if\;\mbox{$\gamma\geq \frac{11}4$,}\end{array}\right.\nonumber\end{equation}for $\varepsilon>0$ arbitrarily small and where $d=1,2,3,$ is the dimension. Furthermore, for $t\in[-\frac34,1)$, for the same choice of $\tau$, we have $\E\norm{m_\lambda^\dagger-u^\dagger}^2_t\leq cn^{-h},$ where \begin{equation}h=\left\{\begin{array}{ll} \frac{4\gamma-4t}{3+d+4\gamma+2\varepsilon}
, & if 
\;\mbox{$\gamma<\frac{11}4$} 
                                     \\ \frac{22-8t}{28+2d+2\varepsilon}, & if\;\mbox{$\gamma\geq\frac{11}4$.}\end{array}\right.\nonumber\end{equation}
For $t=1$ the above rate holds provided $\gamma>1$.
\end{thm}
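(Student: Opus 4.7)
The plan is to obtain the statement as a direct corollary of Theorems \ref{pdecor1} and \ref{pdecor2}, once the parameters of the general framework are identified for this example. By the preceding proposition, Assumptions \ref{a2} hold with
\[
s_0=\tfrac{d}{4},\qquad \beta=\tfrac14,\qquad \ell=\tfrac12,\qquad \Delta=2\ell-\beta+1=\tfrac74,
\]
and in particular $\beta-2\ell=-\tfrac34\le 0$. The hypothesis $\gamma\ge 1=1-s_0+s_0$ combined with $u^\dagger\in X^\gamma$ puts us in the scope of both theorems.

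First I would treat the contraction rate $\varepsilon_n$ via Theorem \ref{pdecor2}. For $\gamma\in[1,\Delta+1]=[1,\tfrac{11}{4}]$ the optimal scaling is
\[
\tau(n)=n^{-\frac{\gamma-1+s_0+\varepsilon}{2(\Delta+\gamma-1+s_0+\varepsilon)}}
=n^{-\frac{\gamma-1+d/4+\varepsilon}{2(3/4+\gamma+d/4+\varepsilon)}},
\]
which, after clearing the factor $\tfrac14$, matches the displayed $\tau(n)$ with $4\gamma\wedge 11=4\gamma$. Since $\beta-2\ell\le 0$, the rate reads
\[
\varepsilon_n=n^{-\frac{\gamma}{2(\Delta+\gamma-1+s_0+\varepsilon)}}=n^{-\frac{2\gamma}{3+d+4\gamma+2\varepsilon}},
\]
after rescaling $\varepsilon$. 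For $\gamma>\tfrac{11}{4}$ the saturated case of Theorem \ref{pdecor2} gives
\[
\tau(n)=n^{-\frac{\Delta+s_0+\varepsilon}{2(2\Delta+s_0+\varepsilon)}},\qquad \varepsilon_n=n^{-\frac{\Delta+1}{2(2\Delta+s_0+\varepsilon)}},
\]
which simplifies to the claimed exponents $-(7+d+\varepsilon)/(28+2d+2\varepsilon)$ and $-11/(28+2d+2\varepsilon)$ respectively, with $4\gamma\wedge 11=11$.

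Next I would handle the weighted norms via Theorem \ref{pdecor1}. The parametrisation $\eta=(1-\theta)(\beta-2\ell)+\theta=-\tfrac34+\tfrac74\theta$ allows me to reach any $t\in[-\tfrac34,1)$ by setting $\theta=(4t+3)/7\in[0,1)$ (and $\theta=1$, i.e.\ $t=1$, is permitted provided $\gamma>1$, matching the saturation statement of Theorem \ref{pdecor1}(iii)). For $\gamma\in(1,\tfrac{11}{4}]$, the rate $-(\Delta+\gamma-1-\theta\Delta)/(\Delta+\gamma-1+s_0+\varepsilon)$ becomes, after multiplying by $4$ and substituting $7\theta=4t+3$,
\[
-\frac{3+4\gamma-7\theta}{3+4\gamma+d+4\varepsilon}=-\frac{4\gamma-4t}{3+d+4\gamma+2\varepsilon}.
\]
For $\gamma>\tfrac{11}{4}$, the rate $-(2-\theta)\Delta/(2\Delta+s_0+\varepsilon)$ becomes analogously $-(22-8t)/(28+2d+2\varepsilon)$. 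The boundary case $\gamma=1$ (which forces $\theta<1$, hence $t<1$) is covered by Theorem \ref{pdecor1}(iii) and yields the same exponent formula with $\gamma=1$.

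There is no substantial obstacle: the proposition has already verified Assumptions \ref{a2}, and the remaining work is the bookkeeping of substituting the four numerical values $(s_0,\beta,\ell,\Delta)$ into the formulas of Theorems \ref{pdecor1} and \ref{pdecor2} and simplifying. The only point requiring a moment of care is rescaling the arbitrary $\varepsilon>0$ inside the exponents (it is absorbed by multiplying out the factor $4$ coming from $s_0=d/4$ and $\Delta=7/4$), and checking that the case distinction $\gamma<\tfrac{11}{4}$ versus $\gamma\ge\tfrac{11}{4}$ is consistent with $\Delta+1=\tfrac{11}{4}$ in the two source theorems.
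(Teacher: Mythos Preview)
Your proposal is correct and matches the paper's approach exactly: the paper does not give a detailed proof but simply states that the result follows by applying Theorems \ref{pdecor1} and \ref{pdecor2}, which is precisely what you do after substituting $s_0=d/4$, $\beta=1/4$, $\ell=1/2$, $\Delta=7/4$ and noting $\beta-2\ell\le 0$. Your explicit bookkeeping of the substitutions (including the harmless rescaling of the arbitrary $\varepsilon$) is in fact more detailed than what the paper provides.
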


\subsection{Example 3 - More general lower order perturbations case}\label{ex3}
The same methodology can be applied to more general examples, for instance, in the case where $\A=\A_0^{\ell\alpha}+\M$, $\C_1=\Sb^{-2}$ and $\C_0=\A_0^{-\alpha}$, for nonnegative functions $q\in W^{a_q,\infty}(\Omega)$ and $r\in W^{a_r,\infty}(\Omega)$, where $\ell, \beta>0$ and  $\alpha>\frac{d}2$ such that $\Delta>2s_0=\frac{d}{\alpha}$. The values of $a_r, a_q$ are chosen as sufficiently large even integers depending on the values of $\alpha, \beta, \ell$. Note that we require $\ell, \beta>0$ for our compactness arguments to work, however, the cases $\beta=0$ and/or $\ell=0$ also work using a slightly modified proof. The proof is omitted for brevity but the
interested reader may consult~\cite{THESIS} for details.

\subsection{Technical results from interpolation theory}\label{interpolation}
Let $(Y^s)_{s\in\R}$ be the Hilbert scale induced by a self-adjoint positive definite linear operator $\mathcal{Q}\in\mathcal{L}(\X)$ (cf. Section \ref{sec:assumptions}). The following result holds~\cite[Theorems 4.36, 1.18, 1.6]{MR2523200}:

\begin{prop}\label{inter}
For any $t>0$, the couples $(\X,Y^t)$  and $(\X, Y^{-t})$ are interpolation couples and for every $\theta\in[0,1]$ we have  $(\X,Y^t)_{\theta,2}=Y^{\theta t}$ and  $(\X,Y^{-t})_{\theta,2}=Y^{-\theta t}$. In particular, for any $s\in\R$, if $T\in\mathcal{L}(\X)\cap\mathcal{L}(Y^s)$  then $T\in\mathcal{L}(Y^{\theta s})$ for any $\theta\in[0,1]$.
\end{prop}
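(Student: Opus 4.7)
The plan is to prove Proposition \ref{inter} by reducing to the spectral representation of $\mathcal{Q}$, which converts both endpoint spaces to weighted $L^2$-spaces, and then invoking the classical interpolation theory for weighted $L^2$-spaces together with the general interpolation of operators. First I would apply the spectral theorem to the bounded self-adjoint positive definite operator $\mathcal{Q}\in\mathcal{L}(\X)$: there exists a measure space $(M,\mu)$, a positive bounded measurable function $m\colon M\to(0,\infty)$, and a unitary $U\colon \X\to L^2(M,d\mu)$ such that $U\mathcal{Q}U^{-1}$ is multiplication by $m$. Under this unitary the Hilbert scale space $Y^s$ (the completion under $\|u\|_s=\|\mathcal{Q}^{-s/2}u\|$) is identified isometrically with the weighted space $L^2(M,m^{-s}d\mu)$.

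Next I would compute the K-functional of the couple $(L^2(d\mu), L^2(m^{-t}d\mu))$ pointwise. For $f\in L^2(d\mu)+L^2(m^{-t}d\mu)$ the optimal decomposition $f=f_0+f_1$ is found by minimising pointwise in $x\in M$, yielding
\begin{equation*}
K(\tau,f)^2 \;\asymp\; \int_M \frac{\tau^2 m(x)^{-t}}{1+\tau^2 m(x)^{-t}}\,|f(x)|^2\,d\mu(x).
\end{equation*}
Inserting this in $\|f\|_{\theta,2}^2=\int_0^\infty(\tau^{-\theta}K(\tau,f))^2\,\tfrac{d\tau}{\tau}$, switching order of integration by Tonelli, and performing the substitution $\sigma=\tau m(x)^{t/2}$ in the inner integral (which converges precisely for $\theta\in(0,1)$), one obtains a constant $c(\theta)$ times $\int_M m^{-\theta t}|f|^2\,d\mu$. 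Thus $(\X,Y^t)_{\theta,2}=Y^{\theta t}$ with equivalent norms; the endpoints $\theta=0,1$ are trivial. The identity for $(\X,Y^{-t})_{\theta,2}$ follows by repeating the argument with weight $m^{t}$ in place of $m^{-t}$.

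Finally, for the operator statement, I would invoke the general interpolation theorem: any linear $T$ bounded as $\X\to\X$ and $Y^s\to Y^s$ extends to a bounded operator between the real interpolation spaces $(\X,Y^s)_{\theta,2}$ with norm at most $\|T\|_{\mathcal{L}(\X)}^{1-\theta}\|T\|_{\mathcal{L}(Y^s)}^{\theta}$, and by the first part this target equals $Y^{\theta s}$. The main technical care lies in making the pointwise K-functional computation rigorous when $\mathcal{Q}$ has continuous spectrum, in particular the density/completion arguments justifying the spectral identification of $Y^s$; in the paper's examples $\mathcal{Q}=\mathcal{A}_0^{-\alpha}$ has pure point spectrum and the whole argument collapses to elementary manipulations with weighted $\ell^2$-sequences, so a reader only interested in those examples could replace the abstract spectral step with the eigenexpansion and avoid the measure-theoretic subtleties entirely.
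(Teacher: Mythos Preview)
Your proof sketch is correct: the spectral-theorem reduction to weighted $L^2$-spaces, the pointwise computation of the $K$-functional, and the appeal to the general interpolation theorem for operators are all standard and valid steps, and the care you note about completions when $\mathcal{Q}$ has continuous spectrum is the only genuinely delicate point.

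However, you should be aware that the paper does not actually \emph{prove} this proposition. It is stated as a well-known result from interpolation theory and simply cited from Lunardi's book \cite[Theorems 4.36, 1.18, 1.6]{MR2523200}; no argument is given in the paper itself. So there is nothing to compare your approach against at the level of proof strategy. What you have written is a perfectly reasonable self-contained justification of the cited result, and in fact the route via the spectral representation and weighted $L^2$-interpolation is essentially how one would unwind Lunardi's Theorem~4.36 in this particular setting. Your closing remark that in the paper's concrete examples ($\mathcal{Q}=\mathcal{A}_0^{-\alpha}$ with pure point spectrum) the whole argument reduces to weighted $\ell^2$-manipulations is accurate and worth keeping, since that is all the paper actually needs.
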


Let $w\in W^{a_w,\infty}(\Omega)$ be a nonnegative function and define the multiplication operator $\mathcal{M}_w\colon \X\to\X$. Note that by the H\"older inequality the operator $\mathcal{M}_w$ is bounded. The last proposition, implies the following lemma.
\begin{lem}\label{prolem}
For any $t\in\R$, $\A_0^{t}\mathcal{M}_w\A_0^{-t}$ is a bounded operator in $\X$, provided $a_w\geq 2\lceil|t|\rceil$. Furthermore, for any $s>0$ the operators $K_1\coloneq\A_0^t\mathcal{M}_w\A_0^{-t-s}$ and $K_2\coloneq\A_0^{t-s}\mathcal{M}_w\A_0^{-t}$ are compact in $\X$ and $(I+K_i)^{-1}, i=1,2,$ are bounded in $\X$.
\end{lem}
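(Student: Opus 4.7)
The plan is to reduce each of the three assertions to properties of the self-adjoint positive Dirichlet Laplacian $\A_0$ and its Hilbert scale $(X^\sigma)_{\sigma\in\R}$, where $X^\sigma$ carries the norm $\|\A_0^\sigma\cdot\|$. For the boundedness of $\A_0^t \mathcal{M}_w \A_0^{-t}$ on $\X$ I would first treat a nonnegative integer $t=k$: using $\A_0^k=(-\Delta)^k$ and the Leibniz rule, $\A_0^k(wu)$ is a finite sum of terms $(\partial^\alpha w)(\partial^\beta u)$ with $|\alpha|+|\beta|\le 2k$, each in $L^2$ with norm $\le c\|\A_0^k u\|$ whenever $w\in W^{2k,\infty}$; the Dirichlet compatibility at each order is preserved because multiplication by $w$ does not destroy the vanishing of $u$ (and of its iterated Laplacians) on $\partial\Omega$. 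For negative integer $t=-k$ the claim follows by passing to the $\X$-adjoint, since $\A_0$ and $\mathcal{M}_w$ are self-adjoint. For general $t\in\R$ I would reinterpret the statement as $\mathcal{M}_w$ being bounded on $X^{-t}$ and interpolate via Proposition \ref{inter} between the integer levels $\pm\lceil|t|\rceil$, which explains the requirement $a_w\ge 2\lceil|t|\rceil$.

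For the compactness of $K_1$ and $K_2$, I would factor $K_1=(\A_0^t\mathcal{M}_w\A_0^{-t})\A_0^{-s}$ and $K_2=\A_0^{-s}(\A_0^t\mathcal{M}_w\A_0^{-t})$: the parenthesised factor is bounded on $\X$ by the previous step, while $\A_0^{-s}$ is compact on $\X$, since $\A_0^{-1}$ is compact (the Dirichlet Laplacian on the bounded domain $\Omega$ has compact resolvent) and hence so is $\A_0^{-s}=(\A_0^{-1})^s$ for $s>0$ by the spectral theorem. A bounded operator composed with a compact operator is compact.

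The invertibility of $I+K_i$ I would handle via the Fredholm alternative: each $K_i$ is compact, so $I+K_i$ is Fredholm of index zero on $\X$, and it suffices to verify injectivity. For $K_1$ with $t\ge -s/2$, suppose $(I+K_1)u=0$ and set $v=\A_0^{-t-s}u\in X^{t+s}$; the equation rearranges to $(\A_0^s+\mathcal{M}_w)v=0$, and since $t+s\ge s/2$ we have $v\in X^{s/2}$, so pairing against $v$ yields
\[
\|\A_0^{s/2}v\|^2+\int_\Omega w\,|v|^2\,dx=0,
\]
whence $v=0$ and therefore $u=0$. For $t<-s/2$ I would invoke the $\X$-adjoint identity $K_1^*=\A_0^{-t-s}\mathcal{M}_w\A_0^t$, which coincides with the same family of operators evaluated at $t'=-t-s>-s/2$; the direct argument applied to $t'$ gives injectivity of $I+K_1^*$, hence surjectivity (and closedness) of the range of $I+K_1$, hence bijectivity by index zero. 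The treatment of $K_2$ is symmetric, with the direct-argument threshold shifted to $t\ge s/2$ and the adjoint $K_2^*$ corresponding to parameter $s-t$.

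The main obstacle is the careful tracking of spaces in the injectivity step: the rewritten equation $(\A_0^s+\mathcal{M}_w)v=0$ has to hold in a space dually paired with one that contains $v$, which is why the thresholds $\pm s/2$ emerge and why the adjoint trick is needed to cover the remaining parameter range. The regularity requirements on $w$ needed to move $\mathcal{M}_w$ through various powers of $\A_0$ amount only to bookkeeping: choosing $a_w$ sufficiently large (depending on $t$ and $s$) is enough throughout.
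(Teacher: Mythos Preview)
Your approach mirrors the paper's almost exactly: boundedness via the Leibniz rule at integer levels combined with interpolation and the adjoint identity, compactness via the factorisation $K_i=(\text{bounded})\circ\A_0^{-s}$ or $\A_0^{-s}\circ(\text{bounded})$, and invertibility via the Fredholm alternative together with positivity. The only substantive difference is in the invertibility step: the paper disposes of it in one sentence (``positivity of $\A_0$ and nonnegativity of $\mathcal M_w$ show that $-1$ cannot be an eigenvalue of $K_i$''), whereas you spell out the energy argument $\|\A_0^{s/2}v\|^2+\int_\Omega w|v|^2=0$, track in which space the pairing is legitimate, and cover the remaining parameter range by passing to the adjoint. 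Your extra care here is correct and is really just a rigorous unpacking of what the paper's one-liner asserts.

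One genuine issue, however, is your claim that ``Dirichlet compatibility at each order is preserved because multiplication by $w$ does not destroy the vanishing of $u$ (and of its iterated Laplacians) on $\partial\Omega$.'' This fails already for $\kappa=2$: if $u\in\mathcal D(\A_0^2)$ then $u$ and $\Delta u$ vanish on $\partial\Omega$, but
\[
\Delta(wu)=w\,\Delta u+2\nabla w\cdot\nabla u+(\Delta w)\,u
\]
need not, since the cross term $2\nabla w\cdot\nabla u$ involves the normal derivative of $u$, which is generically nonzero on $\partial\Omega$. Thus $\mathcal M_w$ does not map $\mathcal D(\A_0^\kappa)$ into itself for $\kappa\ge 2$, and your Leibniz argument as written does not establish boundedness of $\A_0^\kappa\mathcal M_w\A_0^{-\kappa}$ on $\X$ in that range. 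The paper's proof is equally cursory on this point and shares the gap; the statement is unproblematic for $|t|\le 1$, but the higher-order instances invoked in Example~2 would require either additional hypotheses on $w$ near $\partial\Omega$ or a different argument.
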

\begin{proof}
We begin by showing that $\A_0^t\mathcal{M}_w\A_0^{-t}\in\mathcal{L}(\X),$ for  $t\in[-1,1]$. By the last proposition applied for $\mathcal{Q}=\A_0^{-2}, \;T=\mathcal{M}_w,$ and since $\mathcal{M}_w$ is bounded, it suffices to show that $\A_0^{-1}\mathcal{M}_w\A_0$
and $\A_0\mathcal{M}_w\A_0^{-1}$ are bounded in $\X$. In fact it suffices to show that $\A_0\mathcal{M}_w\A_0^{-1}$ is bounded since $\norm{\A_0^{-1}\mathcal{M}_w\A_0}=\norm{(\A_0^{-1}\mathcal{M}_w\A_0)^\ast}=\norm{\A_0\mathcal{M}_w\A_0^{-1}}.$
Indeed, since $\A_0=-\boldsymbol\Delta,$ \begin{equation}\norm{\A_0\mathcal{M}_w\A_0^{-1}\phi}=\norm{{\boldsymbol\Delta}\mathcal{M}_w\A_0^{-1}\phi}=\norm{({\boldsymbol\Delta} w)\A_0^{-1}\phi+2(\nabla w)\cdot(\nabla\A_0^{-1}\phi)+w{\boldsymbol\Delta}\A_0^{-1}\phi}\nonumber\end{equation}\begin{equation}\leq\norm{w}_{W^{2,\infty}(\Om)}(\norm{\A_0^{-1}\phi}+\norm{\nabla\A_0^{-1}\phi}+\norm{\phi})\leq c\norm{w}_{W^{2,\infty}(\Om)}\norm{\phi}.\nonumber\end{equation}
For general $t\in\R$, let $\kappa=\lceil|t|\rceil\in\N$, then as before it suffices to show that $\A_0^\kappa\mathcal{M}_w\A_0^{-\kappa}$ is bounded in $\X$. Again, using the fact that $\A_0=-\boldsymbol{\Delta}$, we have by the product rule for derivatives that $\A_0^\kappa\mathcal{M}_w\A_0^{-\kappa}$ is bounded, provided $w\in W^{2\kappa,\infty}(\Omega)$.

The operators $K_i$ are compact in $\X$, since they are compositions between the compact operator $\A_0^{-s}$ and the bounded operator $\A_0^{t}\mathcal{M}_w\A_0^{-t}$. Positivity of the operator $\A_0$ and nonnegativity of the operator $\mathcal{M}_w$ show that $-1$ cannot be an eigenvalue of $K_i$, so that by the Fredholm Alternative~\cite[\textsection27, Theorem 7]{MR0243367} we have that $(I+K_i)^{-1}, i=1,2,$ are bounded in $\X$.
\end{proof}


\section{The Diagonal Case}\label{sec:diag}In the case where $\C_0, \C_1$ and $\A$, are all diagonalizable in the same eigenbasis our assumptions are trivially satisfied, provided $\Delta>2s_0$. In~\cite{1232.62079}, sharp convergence rates are obtained for the convergence in (\ref{eq:main1}), in the case where the three relevant operators are simultaneously diagonalizable and have spectra that decay algebraically; the authors only consider the case $\C_1=I$ since in this diagonal setting the colored noise problem can be reduced to the white noise one. The rates in~\cite{1232.62079} agree with the minimax rates provided the scaling of the prior is optimally chosen,~\cite{MR2421941}. In Figure \ref{fig} (cf. Section \ref{sec:mainresults}) we have in green the rates of convergence predicted by Theorem \ref{pdecor2} and in blue the sharp convergence rates from~\cite{1232.62079}, plotted against the regularity of the true solution, $u^\dagger\in X^\gamma$, in the case where $\beta=\ell=\frac12$ and $\C_0$ has eigenvalues that decay like $k^{-2}$. In this case $s_0=\frac12$ and $\Delta=\frac32$, so that $\Delta>2s_0$. 

As explained in Remark \ref{r1}, the minimum regularity for our method to work is $\gamma=1$ and our rates saturate at $\gamma=1+\Delta$, that is, in this example at $\gamma=2.5$. We note that for $\gamma\in[1,2.5]$ our rates agree, up to $\varepsilon>0$ arbitrarily small, with the sharp rates obtained in~\cite{1232.62079}, for $\gamma>2.5$ our rates are suboptimal and for $\gamma<1$ the method fails. In~\cite{1232.62079}, the convergence rates are obtained for $\gamma>0$ and the saturation point is at $\gamma=2\Delta$, that is, in this example at $\gamma=3$. In general the PDE method can saturate earlier (if $2\ell-\beta>0$), at the same time (if $2\ell-\beta=0$), or later (if $2\ell-\beta<0$) compared to the diagonal method presented in~\cite{1232.62079}. However, the case $2\ell-\beta<0$ in which our method saturates later, is also the case in which our rates are suboptimal, as explained in Remark \ref{r1}(iv). 

The discrepancies can be explained by the fact that in Proposition \ref{pdelem1}, the choice of $\theta$ which determines both the minimum requirement on the regularity of $u^\dagger$ and the saturation point, is the same for both of the operator norm bounds. This means that on the one hand  to get convergence of the term $\norm{\lambda \B_\lambda^{-1}\C_0^{-1}u^\dagger}$ in equation (\ref{eq:main6}) in the proof of Theorem \ref{pdeth2}, we require conditions which secure the convergence in the stronger $X^1$-norm and on the other hand the saturation rate for this term is the same as the saturation rate in the weaker $X^{\beta-2\ell}$-norm. For example, when $\beta-2\ell=0$ the saturation rate in the PDE method is the rate of the $\X$-norm hence we have the same saturation point as the rates in~\cite{1232.62079}. In particular, we have agreement of the saturation rate when $\beta=\ell=0$, which corresponds to the problem  where we directly observe the unknown function polluted by white noise (termed the \emph{white noise model}).

\section{Conclusions}\label{sec:conclusions}
We have presented a new method of identifying the posterior distribution in a conjugate Gaussian Bayesian linear inverse problem setting (Section \ref{sec:mainresults} and Section \ref{sec:justification}). We used this identification to examine the posterior consistency of the Bayesian approach in a frequentist sense (Section \ref{sec:mainresults} and Section \ref{sec:main}). We provided convergence rates for the convergence of the expectation of the mean error in a range of norms (Theorem \ref{pdeth1}, Theorem \ref{pdecor1}). We also provided convergence rates for the square posterior contraction (Theorem \ref{pdeth2}, Theorem \ref{pdecor2}). Our methodology assumed a relation between the prior covariance, the noise covariance and the forward operator, expressed in the form of norm equivalence relations (Assumptions \ref{a2}). We considered Gaussian noise which can be white. In order for our methods to work we required a certain degree of ill-posedness compared to the regularity of the prior (Assumption \ref{a2}(\ref{a2delta})) and for the convergence rates to be valid a certain degree of regularity of the true solution. In the case where the three involved operators are all diagonalizable in the same eigenbasis, when the problem is sufficiently ill-posed with respect to the prior, and for a range of values of $\gamma$, the parameter expressing the regularity of the true solution, our rates agree (up to $\varepsilon>0$ arbitrarily small) with the sharp (minimax) convergence rates obtained in~\cite{1232.62079} (Section \ref{sec:diag}). 

Our optimized rates rely on rescaling the prior depending on the size of the noise, achieved by choosing the scaling parameter $\tau^2$ in the prior covariance as an appropriate function of the parameter $n^{-\frac12}$ multiplying the noise. However, the relationship between $\tau$ and $n$ depends on the unknown regularity of the true solution $\gamma$, which raises the question how to optimally choose $\tau$ in practice. An attempt to address this question in a similar but more restrictive setting than ours is taken in~\cite{simoni2}, where an empirical Bayes maximum likelihood based procedure giving a data driven selection of $\tau$ is presented. A different approach is taken in~\cite{knapik} in the simultaneously diagonalizable case. As discussed in~\cite{1232.62079}, for a fixed value of $\tau$ independent of $n$, the rates are optimal only if the regularity of the prior exactly matches the regularity of the truth. In~\cite{knapik}, an empirical Bayes maximum likelihood based procedure and a hierarchical method are presented providing data driven choices of the regularity of the prior, which are shown to give optimal rates up to slowly varying terms. We currently investigate hierarchical methods with conjugate priors and hyperpriors for data driven choices of both the scaling parameter of the prior $\tau$ and the noise level $n^{-\frac12}$.

The methodology presented in this paper is extended to drift estimation for diffusion processes in~\cite{Pokern:2012fk}. Future research includes the extension to an abstract setting which includes both the present paper and~\cite{Pokern:2012fk} as special cases. 
Other possible directions are the consideration of nonlinear inverse problems, the use of non-Gaussian priors and/or noise and the extension of the credibility analysis presented in~\cite{1232.62079} to a more general setting.












\end{document}